\newtheorem{theorem}{Theorem}[section]
\newtheorem{lemma}[theorem]{Lemma}
\newtheorem{proposition}[theorem]{Proposition}
\newtheorem{corollary}{Corollary}
\theoremstyle{definition}
\newtheorem{definition}{Definition}
\newtheorem{remark}{Remark}
\newtheorem{problem}{Problem}
\newcommand{\scp}[1]{\langle#1\rangle}
\newcommand{\ux}{\underline{\xi}}
\newcommand{\om}{\omega}
\newcommand{\wo}{\widehat{\omega}}
\newcommand{\Rm}{\mathbb{R}^m}
\newcommand{\Rr}{\mathscr{R}_m}
\newcommand{\R}{\mathbb{R}}
\newcommand{\ut}{\underline{\theta}}
\newcommand{\Rd}{\mathscr{D}_m}
\begin{document}

\title[Metric results on inhomogeneously singular vectors]{Metric results on inhomogeneously singular vectors}

\author{Johannes Schleischitz}

%\address{Institute of Mathematics, Boku Vienna, Austria  \\
%johannes.schleischitz@boku.ac.at}

\thanks{Middle East Technical University, Northern Cyprus Campus, Kalkanli, G\"uzelyurt \\
    johannes@metu.edu.tr ; jschleischitz@outlook.com}

%\vspace{8mm}

% Kim-Kim 2021 : Hausdorff measure of sets of Dirichlet non-improvable
% affine forms  ...   zitieren!!!!!    arxiv: 2006.05727

\begin{abstract}
     Given $\ut\in\Rm$ and any norm $\Vert.\Vert$ on $\Rm$, we consider ``inhomogeneously
    singular'' vectors in $\Rm$ 
    that admit an integer vector
    solution $(q,\underline{p})=(q,p_1,\ldots,p_m)$
    to the system
    \[
    1\leq q\leq Q, \qquad \Vert q\ux-\underline{p}-\underline{\theta}\Vert\leq cQ^{-1/m}
    \]
    for any $c>0$ and all large $Q$. 
    We show that this set
    has large packing dimension,
    and in view of recent deep results by Das, Fishman, Simmons, Urba\'nski,
    our lower bounds are almost sharp (up to $O(m^{-1})$). 
    We establish slightly weaker bounds for the 
    Hausdorff dimension as well.
    Our bounds
    are applicable to the $b$-ary setting, i.e. when restricting to
    $q$ above integral powers of some given base $b\geq 2$.
    We further derive similar results for vectors on
    certain $m$-dimensional fractals, thereby contributing to a question
    of Bugeaud, Cheung and Chevallier and complementing recent
    work by Kleinbock, Moshchevitin and Weiss and by Khalil. 
    Moreover, we show that in contrast to Liouville vectors,
    the set of singular vectors in $\Rm$ 
    does not form a comeagre set. We infer this
    from a general new result that any comeagre set
    in $\Rm$ has full packing dimension. As another independent
    consequence of our method, we show that there are subsets $A,B$ of
    $\mathbb{R}^m$ for which $A+B=B+B=\mathbb{R}^m$ 
    but $A+B$ has Hausdorff
    dimension less than $m$, and generalizations. 
    The proofs 
    rely on observations on sumsets and a result by Tricot
    involving Cartesian products and are surprisingly elementary. 
    The topological results further use an observation of Erd\H{o}s.   
\end{abstract}

\maketitle

{\footnotesize{

{\em Keywords}: singular vector, Hausdorff dimension, packing dimension, Cantor set\\
Math Subject Classification 2020: 11J13, 11J20, 11J82, 11J83}}

%\vspace{1mm}

% Kim-Kim 2021 : Hausdorff measure of sets of Dirichlet non-improvable
% affine forms  ...   zitieren!!!!    arxiv: 2006.05727

\section{Introduction}  \label{int01}

\subsection{Homogeneous approximation and singular vectors} \label{s1.1}

Throughout, let $m$ be a positive integer, mostly $m\geq 2$,
and let $\Vert.\Vert$ be any norm on $\Rm$.
By Dirichlet's Theorem, for any $\ux\in\Rm$ and any $Q\geq 1$ the homogeneous 
system of inequalities
	\begin{equation} \label{eq:1}
1\leq q\leq Q, \qquad \Vert q\ux-\underline{p}\Vert \leq cQ^{-1/m}
\end{equation}
has a solution in an integer vector $(q,\underline{p})=(q,p_1,\ldots,p_m)$
for $c=1$.
As customary we call a vector $\ux\in\Rm$ singular 
if \eqref{eq:1} has a solution for 
arbitrarily small $c>0$ and all large $Q\geq Q_0(c)$. 
We will frequently restrict to totally irrational vectors,
i.e. to the set $\Rr\subseteq \Rm$ of $\ux=(\xi_1,\ldots,\xi_m)$ for which $\{1,\xi_1,\ldots, \xi_m\}$ is $\mathbb{Q}$-linearly independent,
or equivalently that do not lie in a rational affine hyperplane.
Denote by $\mathcal{S}_m\subseteq \Rm$ resp. $\mathcal{S}_m^{\ast}=\mathcal{S}_m\cap \Rr$
the set of singular resp. 
totally irrational singular vectors in $\Rm$.
Clearly $\mathcal{S}_m^{\ast}\subseteq \mathcal{S}_m$, and for $m=1$ we have $\mathcal{S}_1=\mathbb{Q}$ 
and $\mathcal{S}_1^{\ast}=\emptyset$ due to Khintchine~\cite{khintchine}.

An important problem in Diophantine approximation that has received 
much attention
in recent years is to determine the Hausdorff and packing dimensions
(for a definition see see~\cite{falconer}) 
of the set of singular vectors (matrices).
Building up on a landmark paper by 
Cheung~\cite{che} for $m=2$, Cheung and Chevallier~\cite{cheche}
determined the Hausdorff dimension of $\mathcal{S}_m$ for $m\geq 2$.
An upper bound in a more general matrix setting
was later obtained by 
Kadyrov, Kleinbock, Lindenstrauss and Margulis~\cite{kklm}.
Using their deep variational principle, 
Das, Fischman, Simmons, Urba\'nski~\cite{dfsu1, dfsu2} showed that this upper bound is sharp
and thereby extended~\cite{cheche}, with a new proof.
In~\cite{dfsu1, dfsu2} it is
also shown that the packing dimensions
attain the same value (in the general matrix setting). Denoting by $\dim_H(A)$ resp. $\dim_P(A)$ the Hausdorff and packing dimension
of $A\subseteq \Rm$, the claims read as follows.

\begin{theorem}[Cheung, Chevallier; Das, Fishman, Simmons, Urba\'nski] \label{DFSU}
	For any $m\geq 2$, we have
	\[
	\dim_{H}(\mathcal{S}_m)=\dim_{H}(\mathcal{S}_m^{\ast})=\dim_{P}(\mathcal{S}_m)=\dim_{P}(\mathcal{S}_m^{\ast})=m-1+\frac{1}{m+1}.
	\]
\end{theorem}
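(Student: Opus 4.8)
The plan is to prove Theorem~\ref{DFSU} by establishing just two inequalities: the lower bound $\dim_H(\mathcal{S}_m^{\ast})\ge m-1+\tfrac1{m+1}$ and the upper bound $\dim_P(\mathcal{S}_m)\le m-1+\tfrac1{m+1}$. Since $\mathcal{S}_m^{\ast}\subseteq\mathcal{S}_m$ and $\dim_H\le\dim_P$ for every set, these two bounds sandwich all four quantities at the common value $m-1+\tfrac1{m+1}$. The bridge to homogeneous dynamics is Dani's correspondence: writing $X=\mathrm{SL}_{m+1}(\R)/\mathrm{SL}_{m+1}(\mathbb{Z})$ for the space of unimodular lattices in $\R^{m+1}$, $u_{\ux}$ for the unipotent matrix with last column $(\xi_1,\dots,\xi_m,1)$, and $a_t=\mathrm{diag}(e^{t},\dots,e^{t},e^{-mt})$, Mahler's compactness criterion turns the solvability of \eqref{eq:1} with $c\downarrow0$ for all large $Q$ into the statement that the forward orbit $(a_t u_{\ux}\mathbb{Z}^{m+1})_{t\ge0}$ is \emph{divergent} in $X$, i.e.\ eventually leaves every compact set. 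So everything reduces to computing the Hausdorff and packing dimensions of the set of $\ux$ in the $m$-dimensional unipotent fibre whose $a_t$-orbit diverges.

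For the upper bound I would run the entropy/covering argument of Kadyrov, Kleinbock, Lindenstrauss and Margulis, in a form uniform over scales. Divergence forces $a_t u_{\ux}\mathbb{Z}^{m+1}$ to lie, for all $t$ past some threshold, in the thin region $K_\varepsilon^{c}$ of $X$ where the shortest lattice vector has length $\le\varepsilon$. Using the Lyapunov data of $a_t$ on $X$ --- the expanding horospherical direction (which is exactly the $\ux$-direction) has dimension $m$ and exponent $m+1$, there is a contracting one of dimension $m$ and exponent $-(m+1)$, and a neutral part of dimension $m^2$ --- one estimates inductively, window by window in time, that a fixed bounded box of $\ux$'s is covered by $N_t$ balls of radius $e^{-(m+1)t}$ whose orbits can survive inside $K_\varepsilon^{c}$ through time $t$, with $\tfrac{\log N_t}{(m+1)t}\to m-1+\tfrac1{m+1}$. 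This already yields the Hausdorff bound (in essence the theorem of Kadyrov--Kleinbock--Lindenstrauss--Margulis, valid even for divergent-on-average trajectories). To upgrade it to a packing bound one writes $\mathcal{S}_m=\bigcup_{N\ge1}\mathcal{S}_{m,N}$, where $\mathcal{S}_{m,N}$ is the closed set of $\ux$ for which \eqref{eq:1} is solvable for every $Q\ge N$ with, say, $c=1/\log Q$, checks that the covering estimate bounds $\overline{\dim}_B(\mathcal{S}_{m,N}\cap B)$ by $m-1+\tfrac1{m+1}$ uniformly in the ball $B$ and in $N$, and recalls that the packing dimension is the infimum, over countable decompositions, of the supremum of the upper box dimensions of the pieces. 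This scale-uniformity is precisely the refinement that Das, Fishman, Simmons and Urba\'nski add to the earlier Hausdorff bound.

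For the lower bound I would build, in the spirit of Cheung and of Cheung--Chevallier, an explicit self-similar Cantor subset $\mathcal{C}\subseteq\mathcal{S}_m^{\ast}$. Fix thresholds growing like $Q_{k+1}=Q_k^{1+\delta}$ with $\delta>0$ small. Level $k$ is a finite family of boxes; to pass to level $k+1$ one selects, inside each box, a suitably separated collection of rational points $\underline{p}/q$ with denominators in the window $(Q_k,Q_{k+1}]$, attaches to each the sub-box of sidelength $\asymp c_{k+1}q^{-1-1/m}$ on which $\Vert q\ux-\underline{p}\Vert\le c_{k+1}Q_{k+1}^{-1/m}$ holds, and --- this is the delicate point --- carries along enough intermediate approximations that \emph{every} $\ux\in\mathcal{C}$ solves \eqref{eq:1} for \emph{all} large $Q$, not just at the $Q_k$, with constants $c\downarrow0$. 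The branching numbers and contraction ratios are then tuned so that the natural measure on $\mathcal{C}$ obeys a Frostman bound with exponent $s(\delta)\to m-1+\tfrac1{m+1}$ as $\delta\to0$; the mass distribution principle gives $\dim_H(\mathcal{S}_m^{\ast})\ge s(\delta)$, and $\delta\to0$ finishes. Keeping $\mathcal{C}$ off the countably many rational affine hyperplanes guarantees $\mathcal{C}\subseteq\Rr$. Alternatively --- and this is what yields the general matrix setting --- one constructs directly a measure on $X$ of dimension $m-1+\tfrac1{m+1}$ carried by divergent trajectories, via the variational principle of Das, Fishman, Simmons and Urba\'nski, reading its dimension off the Lyapunov spectrum above.

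I expect the main obstacle to be twofold. First, the scale-uniformity for the packing upper bound: the covering estimate has to be organised tightly enough that no countable decomposition of $\mathcal{S}_m$ can lift any piece's upper box dimension above $m-1+\tfrac1{m+1}$ --- this fails for the naive covering argument and is where most of the work sits. Second, hitting the \emph{exact} exponent from below: the correction $\tfrac1{m+1}$, rather than the cheap value $m-1$ coming from a single rational hyperplane or the full $m$, appears only because the construction must sustain good approximations at every intermediate $Q$, which constrains the tree far more than the window-endpoint conditions alone and forces a careful disjointness-and-counting analysis of rationals in the nested boxes; and the same extremal behaviour must realise the Hausdorff and the packing dimension simultaneously --- once more the role of the variational principle.
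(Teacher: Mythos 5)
This theorem is stated in the paper as a citation --- Theorem~\ref{DFSU} is attributed to Cheung--Chevallier (Hausdorff dimension) and Das--Fishman--Simmons--Urba\'nski (packing dimension and the general matrix setting), and the paper itself offers no proof, only the surrounding historical discussion referencing \cite{che,cheche,kklm,dfsu1,dfsu2}. So there is no in-paper argument to compare yours against. That said, your roadmap is a fair high-level summary of how the cited works actually proceed: Dani correspondence turning singularity into divergence of the $a_t$-orbit, the KKLM entropy/covering bound for the Hausdorff upper bound, the variational principle of DFSU for lifting it to a packing bound and for the general lower bound, and the Cheung--Chevallier self-similar tree of rational boxes for the lower bound in this specific $m\times 1$ case. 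Your sandwich logic (lower bound for $\dim_H(\mathcal{S}_m^\ast)$, upper bound for $\dim_P(\mathcal{S}_m)$, inclusion $\mathcal{S}_m^\ast\subseteq\mathcal{S}_m$, and $\dim_H\le\dim_P$) is also the right way to cut the four equalities down to two inequalities.

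One caveat worth flagging in your sketch: the specific countable decomposition $\mathcal{S}_m=\bigcup_N\mathcal{S}_{m,N}$ with $c=1/\log Q$ does not quite cover $\mathcal{S}_m$. Singularity gives you, for each $\varepsilon>0$, a threshold $Q_0(\varepsilon)$ past which $c=\varepsilon$ works, but nothing controls the growth of $Q_0(\varepsilon)$ as $\varepsilon\to0$, so a singular $\ux$ need not satisfy the diagonal condition $c=1/\log Q$ for \emph{all} $Q\ge N$ for any single $N$. The repair is standard --- use a two-parameter family $\mathcal{S}_{m,N,k}=\{\ux:\forall\,Q\ge k,\ \eqref{eq:1}\text{ solvable with }c=1/N\}$ and take the union over $k$ and the intersection over $N$, then push the box-dimension bound through with the right monotonicity --- but it does require more care than your one-line decomposition suggests, and this bookkeeping is precisely part of what the DFSU packing-dimension upgrade is organising. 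The rest of your outline is accurate at the level of a survey; a complete proof would of course require filling in the window-by-window covering counts and the disjointness-and-mass analysis on the Cantor tree, which is where the real work in \cite{cheche,dfsu1,dfsu2} lies.
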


For $w\in[0,1)$, define $\mathcal{S}_m(w)$ resp. $\mathcal{S}_m^{\ast}(w)$ 
as the set of $\ux$ for which 
\begin{equation}  \label{eq:c=1}
1\leq q\leq Q, \qquad \Vert q\ux-\underline{p}\Vert \leq cQ^{-w}
\end{equation}
is soluble in integers for any $c>0$ and $Q\geq Q_0(c)$, 
for $\ux\in \Rm$ resp. $\ux\in\Rr$.
Then
\begin{equation}  \label{eq:GONG}
\mathcal{S}_m=\mathcal{S}_m(1/m), \qquad \mathcal{S}_m^{\ast}=\mathcal{S}_m^{\ast}(1/m), 
\end{equation}
and we need not consider $w\geq 1$ since then $\mathcal{S}_m(w)=\mathbb{Q}^m$, $\mathcal{S}_m^{\ast}(w)=\emptyset$
by~\cite{khintchine} mentioned above. 
Despite Theorem~\ref{DFSU}, the following problem
discussed in a slightly altered form (and in the general matrix setting) in~\cite{dfsu1}, 
remains open for $m>2$.

\begin{problem} \label{p1}
	For $m\geq 2$ and $w\in[1/m,1)$, determine the Hausdorff and packing dimensions of the sets
$\mathcal{S}_{m}(w)$ and $\mathcal{S}_m^{\ast}(w)$.
\end{problem}

In view of \eqref{eq:GONG}, for $w=1/m$ Problem~\ref{p1} 
is solved by Theorem~\ref{DFSU}.
For $m=2$, after several preceeding papers,
it is completely solved as well 
in view of Theorem~1.9 and the subsequent Remark in~\cite{dfsu1}, however the explicit formulas are cumbersome.
Lower bounds for general $m$ and the packing dimensions
given in~\cite[Theorem~3.8]{dfsu2} can be phrased
\begin{equation}  \label{eq:dfsuoben}
\dim_P(\mathcal{S}_m(w))\geq \max\left\{ m-1+\frac{1}{m+1}- \frac{(2m+1)(mw-1)}{(m+1)(w+1)}, m-\frac{m(m-1)w}{m-w} \right\}.
\end{equation}
%where
%\[
%\tau=\frac{w-\frac{1}{m} }{ w+1 }= \frac{mw - %1}{m(w+1)}\in[0,\frac{m-1}{2m}].
%\]
For $m=2$ and regarding Hausdorff dimension, see also the independent
work by Bugeaud, Cheung and Chevallier~\cite{bcc} who showed that
\begin{equation}  \label{eq:bchch}
\dim_H(\mathcal{S}_2(w))\geq 2-2w, \qquad w\in(1/2,1), 
\end{equation}
and also settled equality when $w>1/\sqrt{2}$.
For $m>2$, the precise formulas for both Hausdorff and packing 
dimension of $\mathcal{S}_m(w)$ remain unknown, however
several other estimates complementing the stated claims
are given in~\cite{dfsu1, dfsu2}. 

Next we introduce
the classical homogeneous exponents of ordinary approximation $\om_m(\ux)$ respectively of uniform approximation
$\wo_m(\ux)$ as the supremum of $w$ such that
the system \eqref{eq:c=1} for $c=1$
admits a solution in integers for arbitrarily large resp. all large $Q$. 
For any $\ux\in\Rm$, by \eqref{eq:1} we obviously have
\begin{equation}  \label{eq:kontrast}
\om_m(\ux) \geq \wo_m(\ux)\geq \frac{1}{m}.
\end{equation}
For any $w$ the sets $\mathcal{S}_m(w)$
have a simple relation to superlevelsets of
$\wo_m$ given by
\begin{equation}  \label{eq:accext}
\{ \ux\in\Rm: \wo_m(\ux)>w \}\subseteq 
\mathcal{S}_m(w)\subseteq \{ \ux\in\Rm: \wo_m(\ux)\geq w \},
\end{equation}
and similarly for $\mathcal{S}_m^{\ast}(w)$.
In~\cite{dfsu1, dfsu2} the notation $Sing(m,1)$ means
the singular vectors in $\Rm$, which is our $\mathcal{S}_m$.
Thus our sets $\mathcal{S}_m^{\ast}$ coincide with $Sing(m,1)\cap \Rr$. 
The sets $\widetilde{Sing}_{m,1}^{\ast}(w):=\{ \ux\in\Rr: \wo_m(\ux)\geq w \}$ studied in~\cite{dfsu1} in view of \eqref{eq:accext}
just subtly differ 
from our sets $\mathcal{S}_m^{\ast}(w)$. 
It will be convenient for us to only impose lower bounds for the
order of uniform approximation. However,
certain results below can be extended with some effort
to sets $Sing_{m,1}^{\ast}(w):=\{ \ux\in\Rr: \wo_m(\ux)=w \}$
or $Sing_{m,1}(w):=\{ \ux\in\Rm: \wo_m(\ux)=w \}$ from~\cite{dfsu1} as well, see Remark~\ref{hirsch} below. Presumably, the metrical formulas
for levelsets and superlevelsets coindice, see~\cite[Theorem~4.9]{dfsu2}
and subsequent comments.  

The natural restriction to $\Rr$, inducing the exponents
with the asterisk ''$\ast$'', is crucial at some places. 
This applies in particular to our Theorem~\ref{cantor} on (homogeneously) singular vectors in certain fractal sets.  
Indeed, for $1\leq k\leq m$ integers, $\underline{\zeta}=(\zeta_1,\ldots,\zeta_k)\in\mathbb{R}^k$
and $\ux=(\zeta_1,\ldots,\zeta_k,\zeta_k,\ldots,\zeta_k)\in\Rm$
have the same irrationality exponents, i.e. $\om_k(\underline{\zeta})=\om_m(\ux)$ and $\wo_k(\underline{\zeta})=\wo_m(\ux)$, and similarly for 
the $b$-ary exponents. From Dirichlet's Theorem
and since the Hausdorff dimension of a set never exceeds its packing dimension~\cite{falconer} we get 
\begin{equation}  \label{eq:tr}
\dim_P(\mathcal{S}_m(w)\setminus \Rr) \geq \dim_H(\mathcal{S}_m(w)\setminus \Rr) \geq k, \qquad w<\frac{1}{k}.
\end{equation}
Similar results can also be derived from transference arguments,
considering a linear form in $m$ variables.
%As a consequence of a 
%well-known transference argument~\cite{khintchine},
%~\cite[Ch. V,\S2, Thm. II]{cassels}, for any integer $k\in\{ %1,2,\ldots,m-1\}$ we have the 
%lower bounds 
%
%The set of ''very singular'' vectors 
%$VSing(m,1)=\{ \ux\in\Rm: \wo_m(\ux)>1/m\}$
%in~\cite{dfsu1}, when intersected with $\Rr$,
%becomes the union of our $Sing_m(w)$ over all $w>1/m$.
At some places \eqref{eq:tr} 
exceeds our lower bounds, thereby directly implying some of 
our (homogeneous, non-$b$-ary) results without restriction to $\Rr$.

\subsection{Inhomogeneous and $b$-ary approximation}  \label{inhomo}

Consider now the classical inhomogeneous approximation problem,
where for given $\ux, \ut\in \Rm$,
we study small values of $\Vert q\ux-\underline{p}-\ut\Vert$
in terms of $Q$, where $1\leq q\leq Q$. 
The theory again splits in ordinary and uniform approximation.
Define the ordinary exponent $\om_{m,\ut}(\ux)$
resp. uniform exponent $\wo_{m,\ut}(\ux)$ of inhomogeneous approximation
as the supremum
of $w$ such that
\begin{equation} \label{eq:F}
1\leq q\leq Q, \qquad \Vert q\ux-\underline{p}-\ut\Vert \leq
c Q^{-w}
\end{equation}
has a solution in a non-zero integer vector $(q,\underline{p})=(q,p_1,\ldots,p_m)$ 
for 
$c=1$ and 
certain arbitrarily large $Q$ resp. all large $Q$.
The homogeneous exponents from \S~\ref{s1.1} 
represent the special case
\begin{equation} \label{eq:konserve}
\om_{m}(\ux)= \om_{m,\underline{0}}(\ux), \qquad \wo_{m}(\ux)= \wo_{m,\underline{0}}(\ux).
\end{equation}
In the inhomogeneous case, hugely contrasting \eqref{eq:kontrast}, 
even the ordinary exponents may
vanish. Yet,
from a metrical point of view, 
for ordinary approximation
we still have a very satisfactory understanding 
by the 
inhomogeneous Khintchine-Groshev Theorem, analogous
to the homogeneous theory.
See~\cite{allen, hss} for recent
subtle improvements and further references. On the other hand,
little is known when extending Problem~\ref{p1} and related metrical questions on uniform approximation to the 
inhomogeneous setting.  These problems are
explictly discussed in~\cite[\S~5.8]{dfsu2}.  
However, we should mention
a special case of a metrical result by T. Kim and W. Kim~\cite[Corollary~1.5]{kimkim}.

\begin{theorem}[Kim, Kim]  \label{blau}
	For $m\geq 2$ and every $\ut\notin \mathbb{Q}^m$ and
	every $w>0$, we have
	\[
	\dim_H( \{ \ux\in\Rm: \wo_{m,\ut}(\ux) \leq w \} ) = \min\{ m-\frac{1-mw}{1+w} , m\}. 
	\]
	%%
	%%  maybe only -1/m+eps   in exponent! check!
\end{theorem}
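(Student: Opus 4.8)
The plan is to prove matching upper and lower bounds for $\dim_H\{\ux\in\Rm:\wo_{m,\ut}(\ux)\le w\}$, after first rewriting the defining condition combinatorially. Up to boundary effects, $\wo_{m,\ut}(\ux)\le w$ means that for arbitrarily large $Q$ the point $\ut$ lies in a hole of radius $>Q^{-w}$ of the finite set $\{q\ux\bmod \mathbb{Z}^m:1\le q\le Q\}$; via the Dani correspondence this says that the orbit of the $\ut$-shifted unimodular lattice attached to $\ux$ escapes a fixed cuspidal direction infinitely often. The range $w\ge 1/m$, where the claimed value is $m$, is easy: for every fixed $\ut\notin\mathbb{Q}^m$ one has $\wo_{m,\ut}(\ux)\le 1/m$ for Lebesgue-a.e.\ $\ux$ — the complement of the orbit always contains a ball of radius $\asymp Q^{-1/m}$, and a standard shrinking-target/Borel--Cantelli argument puts the fixed $\ut$ into such a ball for infinitely many $Q$ for a.e.\ $\ux$ — so the set has full measure. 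It remains to treat $0\le w<1/m$ and prove $\dim_H=s(w):=m-\tfrac{1-mw}{1+w}=(m-1)+\tfrac{(m+1)w}{1+w}$.

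For the lower bound I would construct a Cantor-type subset. The rewriting $s(w)=(m-1)+\frac{(m+1)w}{1+w}$, together with the case $w=0$ (where the set already contains every rational affine hyperplane missing $\ut$, hence has dimension $\ge m-1$), suggests that the extremal vectors cluster near rational affine subtori avoiding $\ut$. Concretely I would produce $\ux$ whose denominators $q_k$ of best simultaneous approximation fall into long ``free'' blocks, during which $\ux$ moves with full local dimension $m$ (Dirichlet-generic approximations), interrupted by rare ``blocking'' stages at which $\ux$ is forced very close to a rational subtorus $T\not\ni\ut$ of a dimension $d$ to be optimised, with the approximation quality $\|q_k\ux-\underline{a}_k\|$ and the ensuing gap $q_{k+1}/q_k$ chosen so that the corresponding hole of radius $\asymp\mathrm{dist}(\ut,T)$ around $\ut$ survives throughout the window $q_k\le Q<q_{k+1}$ down to scale $Q^{-w}$; the constraints that make this possible follow from an elementary comparison of the smearing $Q\|q_k\ux-\underline{a}_k\|/q_k$ of the skeleton $\{j\underline{a}_k/q_k\bmod\mathbb{Z}^m\}$ with $\mathrm{dist}(\ut,T)$ and with $Q^{-w}$. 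One then computes the Hausdorff dimension of the resulting limit set by the mass-distribution principle, using Tricot's product formula to handle the (nearly) product structure along and transverse to $T$, and optimises $d$ and the approximation exponents; the optimum should be $d=m-1$ and should produce exactly $s(w)$.

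For the upper bound I would use a Jarn\'ik--Besicovitch covering argument. Fixing $w'\in(w,1/m)$, the set lies in $\limsup_Q A_Q$ with $A_Q=\{\ux:\mathrm{dist}(\ut,\{q\ux\bmod \mathbb{Z}^m:q\le Q\})>Q^{-w'}\}$, i.e.\ the set of $\ux$ whose $\ut$-shifted lattice misses a fixed box of volume $\asymp Q^{1-mw'}\to\infty$. By the geometry of numbers (the structure theory behind Dirichlet-improvability) such a miss forces $\ux$ into a neighbourhood of a rational affine subtorus of controlled height and some codimension $1\le c\le m$; covering $A_Q$ by these neighbourhoods and estimating their number and size in terms of height and codimension, one finds that the total $s$-dimensional content, summed over a dyadic range of $Q$ and over all admissible subtori, converges whenever $s>s(w)$ once the codimension-one case (the dominant one) is optimised. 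Letting $w'\downarrow w$ yields $\dim_H\le s(w)$.

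The main obstacle is making the two optimisations meet \emph{exactly} at $s(w)$: the blocking stages in the construction and the ``near a subtorus'' pieces in the covering are governed by \emph{coupled} Diophantine conditions — a reasonably good simultaneous approximation \emph{within} the subtorus together with a very fast transverse approach to it, sharing the same denominators — and the resulting dimension is delicate; a naive product construction (one special coordinate and $m-1$ free ones) already undershoots $s(w)$, so one must either also branch at the blocking stages over the many bounded-height subtori near a given ball, or tune the transverse approximation rate so that the free blocks recover full dimension at the correct speed. A secondary point is the hypothesis $\ut\notin\mathbb{Q}^m$: one must ensure the subtori used can be taken to miss $\ut$ — which fails precisely when $\ut$ is rational — and handle with care the case where $\ut$ is irrational but has rational coordinates.
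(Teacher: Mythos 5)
The paper does not prove this theorem; it quotes it as Corollary~1.5 of Kim--Kim~\cite{kimkim} and uses it as a black box, so there is no internal argument to compare your sketch against. Evaluated on its own, your proposal is a reasonable high-level strategy but leaves open precisely the steps that carry the mathematical content, and you partly acknowledge this yourself.

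The central gap is the lower bound. You note correctly that a naive product --- one blocked coordinate yielding the $m=1$ (Kim--Liao) dimension $\tfrac{2w}{1+w}$, and $m-1$ free ones --- gives $(m-1)+\tfrac{2w}{1+w}$, which strictly undershoots $s(w)=(m-1)+\tfrac{(m+1)w}{1+w}$ for $m\ge 2$. The missing $(m+1)$ is the same enhancement over a naive product that distinguishes the simultaneous Jarn\'ik dimension $\tfrac{m+1}{\tau+1}$ from the single-coordinate $\tfrac{2}{\tau+1}$, and it can only come from branching the Cantor construction at each blocking stage over the full family of $\asymp H^{m+1}$ rational affine hyperplanes of height up to $H$ (not a fixed coordinate direction), while simultaneously arranging that all of them avoid the fixed $\ut\notin\mathbb{Q}^m$ --- delicate when some coordinates of $\ut$ are rational. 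You gesture at this (``branch over bounded-height subtori'') but the resulting mass distribution and its dimension are not computed; this is the crux. Second, the upper bound asserts that a hole of radius $>Q^{-w'}$ around $\ut$ forces $\ux$ into a neighborhood of a rational affine subtorus of controlled height and thickness, and that the $s$-content of all such neighborhoods sums geometrically for $s>s(w)$; the required quantitative relation between hole radius, height and thickness --- a Minkowski/reduction-theory statement --- is exactly where the exponent is born, and it is not supplied. Finally, the claim that for $w\ge 1/m$ the set has full Lebesgue measure is an inhomogeneous Davenport--Schmidt-type statement: true, but not a bare Borel--Cantelli consequence, and it needs its own argument (e.g.\ via quantitative nondivergence, or from Kim--Kim's zero--full dichotomy, whose measure-theoretic form is in fact finer than the dimension formula you set out to prove).
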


The same is true when taking equality in the left hand side set.
See also Bakhtawar 
and Simmons~\cite{baksim} for refinements, and for 
further contributions to Problem~\ref{p1} Bugeaud and Laurent~\cite{buglau} 
and Kim and Liao~\cite{kimliao}, the last treating
$m=1$ and fixing $\ux$ and letting $\ut$ vary. Note that
we are primarily interested in sets as in Problem~\ref{p1}
with the reverse inequality and $w>1/m$, where Theorem~\ref{blau}
gives little information. 
In this paper we develop some metrical theory
for inhomogeneous, uniform approximation in arbitrary dimension $m$.

%The vectors $\ux$ from the first part of Theorem~\ref{blau} are %precisely
%vectors that are uniformly approximable to arbitrarily large order with %respect to one linear form in $m$ variables in the homogeneous setting.
%By the transference principle discussed in the last paragraph of %Section~\ref{s1.1}, these vectors 
%lie in $\mathcal{S}_m(w)$ for any $w<\frac{1}{m-1}$ and thus are 
%in particular ''very singular''
%with the notion from~\cite{dfsu1}.

In contrast to the homogeneous case in \S~\ref{s1.1},
for general $\ut$, we are unable to provide an affine hyperplane that
consists of vectors with good (even ordinary) 
approximation of order greater than $1/m$, so it seems to us that
no inhomogeneous analogue of \eqref{eq:tr} exists.
Nevertheless, to be on the safe side, we will often restrict to
$\Rd$ which we introduce 
as the complement of any given countable union of affine hyperplanes
in $\Rm$.

Besides the inhomogeneous setting, in this paper we also consider vectors with good $b$-ary approximation, i.e.
where $q=b^N, N\in\mathbb{N}$, for some $b\geq 2$. Then the base $b$ expansion of all coordinates of $b^N\ux-\underline{p}-\ut\in\Rm$ 
start with long blocks of consecutive $0$ or $b-1$ digits. Define the inhomogeneous, 
$b$-ary exponents $\om_{m,\ut}^{(b)}(\ux)$ 
and $\wo_{m,\ut}^{(b)}(\ux)$ as above but where we take 
$q=b^N$ integer
powers of $b$ in \eqref{eq:F}. The homogeneous, ordinary $b$-ary exponent $\om_1^{(b)}(\xi)=\om_{1,0}^{(b)}(\xi)$
equals the exponent $v_b(\xi)$ 
defined and studied by Amou and Bugeaud~\cite{ab10}. It was also implicitly studied by Levesley, Salp and Velani~\cite{lsv} for $b=3$ and numbers in Cantor's middle third set. In contrast to \eqref{eq:kontrast}, the $b$-ary exponent $\om_m^{(b)}$
generically (for Lebesgue almost all
$\ux\in\Rm$) 
vanishes. 
In the 
inhomogeneous setting,
the inequality $\wo_{1,\theta}(\xi)>1$ occurs
for certain $\xi,\theta\in\mathbb{R}$ not inducing the trivial
case given by $q\xi-p-\theta=0$ for some
$(q,p)\in\mathbb{Z}^{2}$, see Kim and Liao~\cite{kimliao} for metrical results. It is however unclear to us whether
$\wo_{1,\theta}^{(b)}(\xi)>1$ may occur unless in the trivial situation. 
We extend some more notions from \S~\ref{s1.1}
to our inhomogeneous, $b$-ary setting.

\begin{definition}  \label{d}
	For $w\in[0,\infty)$, define the nested sets
	\[
	\mathcal{S}_{m,\ut}(w)= \{ \ux\in\Rm:\; \eqref{eq:F} \; \text{has a solution for every} \;\; c>0 \;\; \text{and all large Q} \},
	\] 
	and let $\mathcal{S}_{m,\ut}^{\ast}(w)=\mathcal{S}_{m,\ut}(w)\cap \Rd $ with $\Rd$ defined above. Further let
	\[
	\mathcal{S}_{m,\ut}= \mathcal{S}_{m,\ut}(1/m),\qquad
	\mathcal{S}_{m,\ut}^{\ast}= \mathcal{S}_{m,\ut}^{\ast}(1/m).
	\] 
	Derive accordingly
	\[
	\mathcal{S}_{m,\ut}^{(b)}(w)\subseteq \mathcal{S}_{m,\ut}(w),\quad 
	\mathcal{S}_{m,\ut}^{(b)}\subseteq \mathcal{S}_{m,\ut}, \quad
	\mathcal{S}_{m,\ut}^{(b)\ast}(w)\subseteq \mathcal{S}_{m,\ut}^{\ast}(w),
	\quad \mathcal{S}_{m,\ut}^{(b)\ast}\subseteq \mathcal{S}_{m,\ut}^{\ast}
	\]
	with respect to $b$-ary approximation, i.e. where we restrict
	to $q=b^N$.
\end{definition}

As a consequence of \eqref{eq:konserve}, the according extension of \eqref{eq:accext} holds, and
we find the classical singular vectors as a special
case of our definition via
\[
\mathcal{S}_m(w)= \mathcal{S}_{m,\underline{0}}(w), \qquad \mathcal{S}_m= \mathcal{S}_{m,\underline{0}}, \qquad \mathcal{S}_m^{\ast}(w)= \mathcal{S}_{m,\underline{0}}^{\ast}(w), \qquad \mathcal{S}_m^{\ast}= \mathcal{S}_{m,\underline{0}}^{\ast},
\]
and similarly for the $b$-ary sets.
Results from~\cite{buglau} 
indicate that $w=1/m$ is a metrical threshold value as
in the homgeneous case, so considering $\mathcal{S}_{m,\ut}$ is plausible.

\subsection{Organisation of the paper} The remainder of the paper is 
organized as follows. In \S~\ref{se2}, we establish lower bounds
for the packing dimension of the set of inhomogeneously ($b$-ary) singular vectors,
and also address the case of multiple
inhomogenities $\ut$ simultaneously. 
In \S~\ref{se3} we provide independent
results on sumsets of singular vectors, as well as topological
results. In particular we show that $\mathcal{S}_m$ is not a comeagre set.
In \S~\ref{se4} we deal with Cartesian products of missing digit sets (the Cantor middle third set is a special case), classical fractals. In the homogeneous case $\ut=\underline{0}$, we show that totally irrational vectors with
arbitrary order of singularity $w<1$ exist in these fractals 
and again provide lower bounds for their packing and Hausdorff dimension.

\section{Metrical results for inhomogeneous, $b$-ary singular vectors in $\Rm$} \label{se2}

\subsection{Metrical claims in $\Rm$}  \label{s2.1}

Our first new result gives lower bounds for the packing dimension
of the set of inhomogeneously $b$-ary singular vectors in $\Rd$.

\begin{theorem} \label{H}
	Let $m\geq 1$, $b\geq 2$ integers and $\ut\in\Rm$. Then for 
	any $w\in[0,1)$ we have
	\begin{equation} \label{eq:unfrei}
	\dim_{P}(\mathcal{S}_{m,\ut}^{\ast}(w)) \geq \dim_{P}(\mathcal{S}_{m,\ut}^{(b)\ast}(w)) \geq (1-w)m>0.
	\end{equation}
	In particular
	\begin{equation} \label{eq:frei}
	\dim_{P}(\mathcal{S}_{m,\ut}^{\ast}) \geq \dim_{P}(\mathcal{S}_{m,\ut}^{(b)\ast}) \geq m-1.
	\end{equation}
\end{theorem}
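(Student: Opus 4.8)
The plan is to reduce the problem to a one-dimensional construction combined with a Cartesian-product (Tricot) argument, so I will first recall the relevant lower-bound mechanism for packing dimension.

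\textbf{Reduction to one coordinate.} First I would fix a base $b\geq 2$ and work with the $b$-ary sets $\mathcal{S}_{m,\ut}^{(b)\ast}(w)$, since proving the stronger inclusion suffices. I would try to build, for each single coordinate $\theta_i$ of $\ut$, a ``large'' set $E_i\subseteq\R$ of real numbers $\xi$ such that for $q=b^N$ ranging over powers of $b$ we can choose integers $p$ with $|b^N\xi-p-\theta_i|\leq cb^{-Nw}$ for all large $N$ and every $c>0$. Concretely, one prescribes the $b$-ary digits of $\xi$ in long consecutive blocks: on the $N$-th stage one forces roughly $wN$ digits of $b^N\xi$ (after subtracting an appropriate integer part built to approximate $\theta_i$) to match the digits of $\theta_i$, while leaving the intervening $\approx(1-w)N$ digits completely free. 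The free digits carry the dimension, the forced digits carry the inhomogeneous approximation. This is the standard missing-digit/prescribed-digit Cantor-type construction; its packing dimension I would compute (or quote) to be exactly $1-w$, because the proportion of free digits up to level $n$ oscillates but its \emph{upper} density controls packing dimension and equals $1-w$ along a suitable subsequence of scales. I would also need to arrange that a generic point of $E_i$ is irrational and, more precisely, avoids a prescribed countable union of affine hyperplanes when we pass to the product — but since each hyperplane condition is a single linear relation among the coordinates, and our digit freedom is ample, a standard Baire-category or measure argument inside the Cantor set removes a meagre/null exceptional set, leaving $\dim_P$ unchanged.

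\textbf{Taking the Cartesian product.} Next I would set $E=E_1\times\cdots\times E_m\subseteq\Rm$ (with the $i$-th factor tuned to $\theta_i$), and observe that any $\ux=(\xi_1,\ldots,\xi_m)\in E$ satisfies the vector inequality $\Vert b^N\ux-\underline p-\ut\Vert\leq c\,b^{-Nw}$ for all large $N$, by applying the coordinatewise estimates and comparing norms (all norms on $\Rm$ being equivalent, the constant is absorbed into $c$). Hence $E\subseteq\mathcal{S}_{m,\ut}^{(b)}(w)$, and after the hyperplane-removal step $E\cap\Rd\subseteq\mathcal{S}_{m,\ut}^{(b)\ast}(w)$. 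The dimension of the product I would get from Tricot's inequality, which I am allowed to invoke: $\dim_P(E_1\times\cdots\times E_m)\geq \dim_H(E_1)+\dim_P(E_2)+\cdots$, or more simply in the homogeneous-factor case $\dim_P(\prod E_i)\geq\sum\dim_P(E_i)$ is false in general, so I would be careful and instead use $\dim_P(A\times B)\geq\dim_H(A)+\dim_P(B)$ iteratively, which yields $\dim_P(E)\geq (m-1)\dim_H(E_i)+\dim_P(E_i)$. Therefore I also need the Hausdorff dimension of the single-coordinate Cantor set $E_i$; for the prescribed-digit construction this equals $1-w$ as well (the forced blocks have vanishing frequency in the liminf sense only if $w$ is distributed uniformly, which one can arrange, making the set ``regular'' with $\dim_H=\dim_P=1-w$). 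With that, $\dim_P(E)\geq m(1-w)$, giving \eqref{eq:unfrei}, and \eqref{eq:frei} is the instance $w=1/m$.

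\textbf{Main obstacle.} The delicate point, and the one I expect to require the most care, is making the single-coordinate set simultaneously achieve \emph{packing} dimension $1-w$ (which wants the free-digit proportion to be large along some subsequence, favouring bursts of free digits) while still being \emph{approximable of uniform order $w$ with arbitrarily small $c$} (which forces genuinely long forced blocks infinitely often at every scale). These two desiderata pull in opposite directions on the digit pattern, so one must interleave ever-longer free blocks and ever-longer forced blocks and verify that the forced blocks, although sparse, still occur densely enough in $N$ to guarantee the ``all large $Q$'' (all large $N$) requirement in the definition of $\mathcal{S}_{m,\ut}^{(b)\ast}(w)$ — recall singularity demands a solution for \emph{every} large $Q$, not merely infinitely many. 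Resolving this means choosing the block lengths so that between the end of one forced block at scale $N_k$ and the start of the next, every intermediate $N$ still sees a forced block of relative length $\geq w-\varepsilon$ inherited from truncation, which is automatic if forced blocks are nested prefixes; I would spell that monotonicity out. The hyperplane-avoidance for $\Rd$ is routine by comparison and I would only sketch it.
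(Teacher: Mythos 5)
There is a genuine gap, and it is at the exact point you flag as your \emph{main obstacle}. You are correct that packing dimension rewards a high upper density of free digits while uniform approximation of order $w$ forces long prescribed blocks at every scale. But your proposed resolution --- arranging the free-digit proportion to have constant density $1-w$ so that $\dim_H(E_i)=\dim_P(E_i)=1-w$ --- is incompatible with membership in $\mathcal{S}_{1,\theta_i}^{(b)}(w)$ for $w>0$. Here is why concretely: if the forced blocks are $[a_j,b_j]$ with lengths $\ell_j=b_j-a_j$, then for $Q=b^{a_{j+1}-1}$ (just before the next forced block) the only available $b$-power denominator is $q=b^{a_j-1}$, giving error $\asymp b^{-\ell_j}$, so singularity of order $w$ demands $\ell_j\gtrsim w\,a_{j+1}$. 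On the other hand the cumulative forced proportion up to $a_{j+1}$ is $\bigl(\sum_{i\le j}\ell_i\bigr)/a_{j+1}\ge \ell_j/a_{j+1}\gtrsim w$, with \emph{equality only if} $\ell_i=0$ for $i<j$, i.e.\ only if there is essentially a single forced block. For any infinite construction, the forced density must therefore overshoot $w$ at the end of each forced block: at $n=b_j=a_j+\ell_j$ the forced proportion is at least $\ell_j/(a_j+\ell_j)$, which tends to $1$ whenever $\ell_j\gg a_j$ (and $\ell_j\gg a_j$ is exactly what you need when you push $\dim_P$ toward $1-w$). Consequently the \emph{lower} density of free digits --- hence $\dim_H(E_i)$ --- is pushed toward $0$, not toward $1-w$. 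Your iterated Tricot bound $\dim_P(E)\ge(m-1)\dim_H(E_i)+\dim_P(E_i)$ then yields something close to $(1-w)$, not $m(1-w)$. This is not a presentational issue; the statement in parentheses (``making the set regular with $\dim_H=\dim_P=1-w$'') is false, and without it the whole scheme collapses to a bound that does not scale with $m$. Indeed, the paper's own Theorem~\ref{ax3} for Hausdorff dimension gives only $m\bigl(\frac{1-w}{1+w}\bigr)^2$, strictly less than $m(1-w)$, which is consistent with this obstruction.

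The paper sidesteps the problem entirely by \emph{not} computing dimensions of a Cantor set inside the singular set. Instead it decomposes an \emph{arbitrary} $\ux\in\Rm$ as $\ux=\underline{x}_0+\underline{x}_1$ using alternating digit blocks of geometric ratio $\nu_0,\nu_1$, arranges $\underline{x}_1\in\mathcal{S}_{m,\ut}^{(b)\ast}(w)$ and $\underline{x}_0\in\mathcal{W}_m^{(b)}(\nu_0-1)$, and then applies Tricot in the form $\dim_H(A+B)\le\dim_P(A)+\dim_H(B)$ to the sumset identity (of full measure). The point is that $\dim_H(\mathcal{W}_m^{(b)}(\nu_0-1))=m/\nu_0$ is \emph{known} from Borosh--Fraenkel, so one gets $\dim_P(\mathcal{S}_{m,\ut}^{(b)\ast}(w))\ge m-m/\nu_0\to m(1-w)$ as $\nu_1\to\infty$, with no need to control the Hausdorff dimension of the singular set at all. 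If you want to salvage a direct product-set proof, the viable route is \emph{not} iterated Tricot but a direct computation that the $m$-fold product of your digit-Cantor set $E$ has $\dim_P(E^m)=m\,\dim_P(E)$ (since $E$ is a homogeneous Moran set with $\dim_P=\overline{\dim}_B$ and the product is again a Moran set in $\Rm$ with free-coordinate count multiplied by $m$ at each level); but this requires an argument you did not supply and is a different mechanism than the one you invoke. The hyperplane-avoidance for $\Rd$, which you only sketch, is indeed routine and the paper handles it with Lemma~\ref{daslemma}.
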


\begin{remark} \label{ureh}
	% For
	%the non-$b$-ary sets $\mathcal{S}_{m,\ut}(w)$ and 
	%$\mathcal{S}_{m,\ut}$, the estimates are a consequence of %\eqref{eq:trstrich}.
	If $\ut=\underline{0}$,
	then \eqref{eq:unfrei}, \eqref{eq:frei} for
	$\mathcal{S}_{m}^{\ast}(w)$ and 
	$\mathcal{S}_{m}^{\ast}$ instead
are superseeded by \eqref{eq:dfsuoben}, and regarding the sets $\mathcal{S}_{m}(w)$ and $\mathcal{S}_{m}$ also the bound in
 \eqref{eq:tr} is larger. Moreover, the Hausdorff (thus packing) dimension of the non-$b$-ary sets $\mathcal{S}_{m,\ut}^{\ast}(w)$
 is in fact full when $w<1/m$ by Theorem~\ref{blau}. So the non-$b$-ary claims are of interest for $\ut\neq \underline{0}$ and $w\geq 1/m$.
\end{remark}

\begin{remark}
	When combining our method with the one of~\cite[Theorem~2.3]{arxiv3}
	based on~\cite[Example~4.6]{falconer},
	we also obtain a positive lower bound for the Hausdorff dimension, which is however considerably weaker. A calculation yields $\dim_H(\mathcal{S}_{m,\ut}^{(b)\ast}(w))\geq \left(\frac{1-w}{1+w}\right)^2$.
\end{remark}

Note that when $\ut=\underline{0}$, we cannot have anything larger than $m-1+\frac{1}{m+1}$
in \eqref{eq:frei} by Theorem~\ref{DFSU}, so up to $O(m^{-1})$ 
it is optimal. 
We stress that the setup in Theorem~\ref{H} is more general 
than Theorem~\ref{DFSU} in the sense
that it holds in inhomogeneous and $b$-ary setting. In particular, we
should expect strict inequality $\dim_{P}(\mathcal{S}_{m}^{(b)\ast})<\dim_{P}(\mathcal{S}_{m}^{\ast})$, as is also
suggested by \eqref{eq:jarnik} below. It is thus
natural that our new estimate \eqref{eq:unfrei}
is weaker than \eqref{eq:dfsuoben}.
We further notice that for $m=2$ the bound is identical
with the upper bound for the Hausdorff dimension in \eqref{eq:bchch}. 

Our proof of Theorem~\ref{H} and in fact all claims 
below are surprisingly easy and completely
unrelated to any previous approach, as for example in~\cite{dfsu2}.
We remark that our method a fortiori 
readily extends to general inhomogeneous systems of $m$ linear
forms in $n$ variables, 
however the results turn out most interesting 
in our setting $n=1$. Moreover, for $n>1$ the exact
value of the packing dimensions of levelsets in the homogeneous
problem are known~\cite[Theorem~3.8]{dfsu1}.
% check again !!!!!!!!!!!!!!!!!!!
% !!!!!!!!!!!!!!!!!!!!!!!!!!!!!!!!

We also obtain a slightly weaker lower bound for the Hausdorff dimension. 

\begin{theorem}  \label{ax3}
	With the assumption and notation of Theorem~\ref{H}, we have
	\[
	\dim_H(\mathcal{S}_{m,\ut}^{\ast}(w))\geq \dim_H(\mathcal{S}_{m,\ut}^{(b)\ast}(w))\geq m\left(\frac{1-w}{1+w}\right)^2.
	\]
	In particular
	\[
	\dim_{H}(\mathcal{S}_{m,\ut}^{\ast}) \geq \dim_{H}(\mathcal{S}_{m,\ut}^{(b)\ast}) \geq m\left(1-\frac{2}{m+1}\right)^2.
	\]
\end{theorem}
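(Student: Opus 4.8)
The plan is to reduce the Hausdorff dimension estimate to the packing dimension construction of Theorem~\ref{H} via a Cartesian product argument, in the same spirit as the remark following Theorem~\ref{H} that alludes to~\cite[Theorem~2.3]{arxiv3} and~\cite[Example~4.6]{falconer}. The key point is that the set $\mathcal{S}_{m,\ut}^{(b)\ast}(w)$ (restricted to $\Rd$) is, by the very construction used for Theorem~\ref{H}, of a self-similar ``missing-digit''-type nature along a sparse sequence $q=b^N$: one prescribes long blocks of digits (forcing the approximation condition) on a sequence of scales $N_1<N_2<\cdots$ that may be chosen to grow arbitrarily fast, and leaves the intermediate digits free. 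Choosing the growth extremely lacunary makes the packing dimension large (this is what yields $(1-w)m$), but for Hausdorff dimension such a set typically collapses; the standard fix is to intersect with a product structure so that Tricot's inequality $\dim_H(E\times F)\ge \dim_H(E)+\dim_H(F)$ — or, more efficiently, the combination $\dim_H(E\times F)\ge \dim_H(E)+\dim_P(F)$ bound stated in~\cite{falconer} — can be applied to transfer the packing bound into a Hausdorff bound.

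Concretely, I would first isolate the one-dimensional ($m=1$) mechanism: construct, for a given $\theta\in\R$ and $w\in[0,1)$, a Cantor-type set $C_w(\theta)\subseteq \mathscr{D}_1$ of reals $\xi$ for which the digit constraints forcing $\Vert b^N\xi-p-\theta\Vert\le cb^{-Nw}$ hold along a self-similar sequence of scales, but now with the scales in an \emph{arithmetic-progression-like} pattern rather than lacunary, so that the set is genuinely self-similar with an explicit similarity dimension. A block of forced digits of relative length $\approx w$ at each stage leaves a free block of relative length $\approx 1-w$; arranging the stages so that between consecutive forced blocks there is a free block, and iterating at ratio comparable to the block length, produces a self-similar set whose Hausdorff dimension one computes directly from the Moran/similarity-dimension formula to be $\ge \left(\tfrac{1-w}{1+w}\right)^2$ — the square arising because one pays the factor $\tfrac{1-w}{1+w}$ once for the proportion of free scales and once for the density of free digits within those scales, matching the explicit constant in the statement. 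Then I would take the $m$-fold Cartesian product $C_w(\theta_1)\times\cdots\times C_w(\theta_m)$ (with a harmless coordinate-wise reduction of the norm $\Vert.\Vert$ to $\Vert.\Vert_\infty$, absorbing constants into $c$), verify it lies inside $\mathcal{S}_{m,\ut}^{(b)\ast}(w)$ for $\ut=(\theta_1,\ldots,\theta_m)$ — the synchronization of the scales across coordinates is exactly what makes a single $Q=b^N$ work simultaneously — and apply the product lower bound for Hausdorff dimension $\dim_H\big(\prod_{i}C_w(\theta_i)\big)\ge \sum_i \dim_H(C_w(\theta_i))\ge m\left(\tfrac{1-w}{1+w}\right)^2$. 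The restriction to $\Rd=\mathscr{D}_m$ is ensured by choosing the free digits to avoid the countably many rational affine hyperplanes, which removes only a measure-zero, dimension-zero exceptional set from each Cantor factor and hence does not affect the dimension bound.

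The main obstacle I anticipate is \emph{compatibility of the inhomogeneous shift $\theta_i$ with a genuinely self-similar digit construction}: unlike the homogeneous case, where forcing leading zeros of $b^N\xi-p$ is literally a statement about the digits of $\xi$, forcing $\Vert b^N\xi-p-\theta_i\Vert$ small is a statement about the digits of $\xi$ \emph{relative to the digits of $\theta_i$}, which are themselves arbitrary and not self-similar. The resolution is that at stage $N=N_k$ one simply matches the block of digits of $\xi$ in positions $N_k+1,\ldots,N_k+\ell_k$ to the corresponding block of digits of $\theta_i$ (with the usual $b$-adic carrying caveat handled by allowing the two standard digit expansions, i.e. a binary branching at each boundary), so the construction is self-similar ``up to a bounded-to-one coding'' determined by $\theta_i$ — which is enough to preserve the Moran dimension computation since the number of admissible length-$\ell_k$ blocks at each stage is still $\ge b^{\lceil(1-w)\ell_k\rceil}$ up to a multiplicative constant. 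A secondary technical point is that one must choose the stage lengths $\ell_k$ and gaps so that the ratio of diameters between consecutive Moran stages is controlled both above and below (to get a clean similarity dimension rather than merely Hausdorff/packing bounds that differ), but this is a routine arithmetic bookkeeping once the scheme is fixed, and I would defer it to the formal proof. Finally, the passage from the $b$-ary set to the general (non-$b$-ary) set $\mathcal{S}_{m,\ut}^{\ast}(w)$ is immediate from the trivial inclusion $\mathcal{S}_{m,\ut}^{(b)\ast}(w)\subseteq\mathcal{S}_{m,\ut}^{\ast}(w)$ already recorded in Definition~\ref{d}, and the ``in particular'' statement is the case $w=1/m$.
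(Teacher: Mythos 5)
Your proposal follows the paper's own route. The paper builds a one--dimensional Cantor--Moran set $\mathscr{Y}$ inside $\mathcal{S}_{1,\theta}^{(b)\ast}(w)$ by prescribing the base--$b$ digits of $\xi$ to match those of $\theta$ on a geometrically growing sequence of digit blocks $I_{2n+1}$ and leaving the complementary blocks $I_{2n}$ free, lower--bounds $\dim_H(\mathscr{Y})$ via Falconer's Example~4.6 (quoted as Proposition~\ref{falke}), forms the $m$--fold product, and applies $\dim_H(\mathscr{Y}^m)\ge m\,\dim_H(\mathscr{Y})$ from~\eqref{eq:kproS}, with Lemma~\ref{daslemma} used to absorb the restriction to $\Rd$. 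Optimising the two block--growth parameters under~\eqref{eq:satisfy} gives $\nu_0=2/(1+w)$, $\nu_1=(1+w)/(1-w)$ and the constant $((1-w)/(1+w))^2$ you arrived at. So the core idea, the reduction to a $1$--D factor, the product step, and even the heuristic ``two factors of $\tfrac{1-w}{1+w}$'' are in line with the paper.

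Two imprecisions should be corrected. First, the inequality you quote as coming from~\cite{falconer}, $\dim_H(E\times F)\ge\dim_H(E)+\dim_P(F)$, is reversed: $\dim_H(E)+\dim_P(F)$ is an \emph{upper} bound for $\dim_H(E\times F)$; the valid lower bound (and the one your product step actually uses) is $\dim_H(E\times F)\ge\dim_H(E)+\dim_H(F)$, as in~\eqref{eq:kproS}. Second, the construction cannot be made genuinely self--similar, and the scales cannot be taken in an arithmetic progression: the requirement that $\Vert b^N\xi-p-\theta\Vert\le cQ^{-w}$ hold for \emph{all} large $Q$ forces the matched digit block beginning at position $N_k$ to have length growing proportionally to $N_{k+1}$, so the positions $N_k$ must grow geometrically, exactly as the intervals $I_j$ do in the paper. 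The resulting set is a Cantor--Moran set with shrinking stage ratios, for which Hausdorff and packing dimensions genuinely differ (the former a $\liminf$, the latter a $\limsup$); one must therefore invoke Proposition~\ref{falke} rather than a similarity--dimension formula, which would instead give $1-w$. With these two points corrected, your argument matches the paper's proof.
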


Note that the latter bound is of order $m-4+O(m^{-1})$. See
also~\cite[Theorem~2.3]{arxiv3}.

\subsection{Singularity with respect to several $\ut$ simultaneously} \label{se2.2}
We now investigate sets in $\Rm$ that belong to $\mathcal{S}_{m,\ut}$
simultaneously for several $\ut\in\Rm$. To our knowledge, this topic
has not been studied before, see however the related remarks 
on~\cite{bglasgow, ts, kimliao} in the last paragraph
of this section. Denote by 
$e=2.71\ldots$ Euler's number. 

\begin{theorem}  \label{simu}
	Let $m\geq 1$, $k\geq 1, b\geq 2$ be integers
	and $\Theta=\{ \ut_1,\ldots,\ut_k \}$
	with $\ut_s\in\Rm, 1\leq s\leq k$ arbitrary. For any $w\in[0,1)$
	we have
	\begin{equation}  \label{eq:bound}
\dim_P(\bigcap_{\ut\in \Theta} \mathcal{S}_{m,\ut}^{\ast}(w))\geq
\dim_P(\bigcap_{\ut\in \Theta} \mathcal{S}_{m,\ut}^{(b)\ast}(w))\geq (1-kew)m.
	\end{equation}
	In particular, if $k<m/e$,
	then for any $\Theta=\{ \ut_1,\ldots,\ut_k \}$ we have
	\[
\dim_P(\bigcap_{\ut\in \Theta} \mathcal{S}_{m,\ut}^{\ast})\geq \dim_P(\bigcap_{\ut\in \Theta} \mathcal{S}_{m,\ut}^{(b)\ast})>0. 
\]
\end{theorem}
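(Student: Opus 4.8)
The plan is to exhibit, inside $\bigcap_{s=1}^{k}\mathcal{S}_{m,\ut_s}^{(b)}(w)\cap\Rd$, a single Moran-type Cantor set $E\subseteq\Rm$ cut out by base-$b$ digit restrictions, and to bound $\dim_P(E)$ from below; the first inequality in \eqref{eq:bound} is immediate from $\mathcal{S}_{m,\ut}^{(b)\ast}(w)\subseteq\mathcal{S}_{m,\ut}^{\ast}(w)$. Following the scheme behind Theorem~\ref{H} but now serving the $k$ inhomogeneities in a round-robin, I would fix a rapidly increasing sequence $\mathcal{N}=\{N_1<N_2<\cdots\}$ of exponents, essentially geometric of ratio $\lambda>1$ to be optimised, and attach to the index $j$ the inhomogeneity $\ut_{s(j)}$ with $s(j)\equiv j\pmod k$, $1\le s(j)\le k$. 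The set $E$ consists of those $\ux=(\xi_1,\dots,\xi_m)$ whose base-$b$ digits are \emph{prescribed} on the block $B_j=[N_j+1,\,N_j+L_j]$ to agree, coordinate by coordinate, with the leading digits of $\ut_{s(j)}$, and are \emph{free} at every position outside $\bigcup_j B_j$. The crucial length is $L_j\approx wN_{j+k}$ (plus a slowly growing term absorbing $c$ and the norm-comparison constant): the next exponent of $\mathcal{N}$ available for $\ut_{s(j)}$ is $N_{j+k}$, so the candidate solution $q=b^{N_j}$ must be accurate to order $b^{-wN_{j+k}}$ to cover all $Q\in(b^{N_j},b^{N_{j+k}}]$, and accuracy to that order at coordinate $i$ says precisely that the digits of $\xi_i$ from position $N_j+1$ on begin with the first $\approx wN_{j+k}$ digits of the $i$-th coordinate of $\ut_{s(j)}$.

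With this setup, $E\subseteq\mathcal{S}_{m,\ut_s}^{(b)}(w)$ for each fixed $s$ is a direct check: given $c>0$ and large $Q$, pick $j\equiv s\pmod k$ with $b^{N_j}\le Q<b^{N_{j+k}}$, take $q=b^{N_j}$ and $\underline p$ the nearest integer vector, and read off from the prescribed digits that $\Vert q\ux-\underline p-\ut_s\Vert\le Cb^{-L_j}\le cQ^{-w}$ once $j$ is large. One must ensure the blocks $B_j$ are pairwise disjoint, since overlapping blocks would impose contradictory digit constraints; with $N_j\asymp\lambda^j$ this amounts to $1+w\lambda^{k}<\lambda$, i.e. $w<(\lambda-1)/\lambda^{k}$ (failures at the first few indices being harmless, as one may simply leave those positions free). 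I would also record two routine points: deleting one digit position from each block $B_j$ — which changes no asymptotic digit density — allows diagonalisation against the countably many affine hyperplanes defining $\Rd$, so $E$ may be taken inside $\Rd$; and the carry/mod-$1$ bookkeeping in the accuracy estimate is trivial because $E$ is built by prescribing digits outright, not by adding two numbers.

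For the dimension I would use the standard fact that a set in $\Rm$ cut out this way — all $b$ digits of all $m$ coordinates free at the positions of a set $F\subseteq\mathbb N$ and prescribed off $F$ — has $\dim_P=m\limsup_{n}|F\cap[1,n]|/n$ (this is its upper box dimension, realised as a packing dimension by the uniform measure on $E$), with the companion $\dim_H=m\liminf_{n}|F\cap[1,n]|/n$ obtainable from Tricot's inequality $\dim_H(A_1\times\cdots\times A_m)\ge\dim_H(A_1)+\cdots+\dim_H(A_m)$ applied to the one-dimensional coordinate factors (the latter is what feeds the Hausdorff analogue). Here $F=\mathbb N\setminus\bigcup_jB_j$, and along the subsequence $n=N_{j+1}$ one has $\sum_{i\le j}L_i\approx w\sum_{i\le j}N_{i+k}\approx\frac{w\lambda^{k}}{\lambda-1}N_{j+1}$, so $\dim_P(E)\ge m\bigl(1-\tfrac{w\lambda^{k}}{\lambda-1}\bigr)$. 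It remains to optimise over $\lambda$: the choice $\lambda=k/(k-1)$ is legitimate throughout the range where the bound is positive, since there $(\lambda-1)/\lambda^{k}=\tfrac1k(1-1/k)^{k-1}>1/(ek)>w$; it turns the estimate into $m\bigl(1-kw(1+\tfrac1{k-1})^{k-1}\bigr)$, and $(1+\tfrac1{k-1})^{k-1}<e$ gives $\dim_P(E)\ge(1-kew)m$, the inequality being vacuous when $kew\ge1$. The ``in particular'' claim is then immediate, since $k<m/e$ forces $1-kew>0$ at $w=1/m$, and $\mathcal{S}_{m,\ut}^{\ast}=\mathcal{S}_{m,\ut}^{\ast}(1/m)$.

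The one genuinely new point over Theorem~\ref{H} is the design of $\mathcal{N}$ and the exponent bookkeeping: recognising that the constrained block attached to index $j$ must have length proportional to $N_{j+k}$ rather than $N_{j+1}$, because the $k$ inhomogeneities are handled cyclically with period $k$, and then seeing that the resulting optimisation $\max_{\lambda}\bigl(1-\tfrac{w\lambda^{k}}{\lambda-1}\bigr)$ peaks at $\lambda=k/(k-1)$ with the value controlled by $(1-1/k)^{k-1}>e^{-1}$ — which is exactly where Euler's number enters. I expect this to be the main (if modest) obstacle; the digit-density dimension formulas, Tricot's product inequality, the $\Rd$-diagonalisation, and the elementary accuracy estimates are all standard.
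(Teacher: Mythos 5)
Your proposal is correct in substance and takes a genuinely different route from the paper. The paper's proof decomposes an \emph{arbitrary} $\ux\in\Rm$ as $\ux=\underline{x}_0+\underline{x}_1$, with $\underline{x}_1$ carrying the $k$ inhomogeneity digit patterns (in a round-robin over a $(k+1)$-periodic block structure with two growth rates $\nu_0,\nu_1$) and $\underline{x}_0=\ux-\underline{x}_1$ vanishing on the ``free'' blocks, so that $\underline{x}_1\in B:=\bigcap_\ut\mathcal{S}_{m,\ut}^{(b)\ast}(w)$ for a.e.\ $\ux$ (via Lemma~\ref{daslemma}) and $\underline{x}_0\in A:=\mathcal{W}_m^{(b)}(\nu_0-1)$; then it deduces $\dim_P(B)\ge m-\dim_H(A)$ from Tricot's inequality \eqref{eq:thor} together with the explicit value $\dim_H(A)=m/\nu_0$ from \eqref{eq:jarnik}. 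You instead construct the Cantor set $E$ (which is, in effect, the image of the map $\ux\mapsto\underline{x}_1$) directly inside the target and bound $\dim_P(E)$ from below by the digit-density formula $\dim_P(E)=m\limsup_n|F\cap[1,n]|/n$ for the set $F$ of free digit positions. Both routes lead to exactly the same optimisation $\min_\lambda\lambda^k/(\lambda-1)$ at $\lambda=k/(k-1)$ and to the same appearance of $e$ via $(1-1/k)^{k-1}>e^{-1}$; your parametrisation by a single ratio $\lambda$ with constrained block length $\approx wN_{j+k}$ is equivalent to the paper's two rates $(\nu_0,\nu_1)$, with $\lambda\leftrightarrow\nu_1$ and $\nu_0$ determined by the free-block fraction. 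What the paper's route buys is that it avoids any lemma about packing dimension of digit-restricted Cantor sets — it only needs Tricot plus the Jarn\'ik–Borosh–Fraenkel formula, and this fits the uniform strategy used throughout the paper. Your route buys self-containedness (no need to invoke \eqref{eq:jarnik}) but requires the fact that for such homogeneous digit sets $\dim_P$ coincides with the upper box dimension — true (e.g.\ via Falconer's criterion that $\overline{\dim}_B(E\cap V)=\overline{\dim}_B(E)$ for all open $V$ meeting $E$ forces $\dim_P(E)=\overline{\dim}_B(E)$), but a point you should state and justify rather than label ``standard''. Two smaller remarks: (1) your treatment of the restriction to $\Rd$ by ``diagonalisation'' using one freed digit per block is too loose as stated — a single hyperplane can meet $E$ in a set that no finite digit change avoids; the clean argument is the measure-theoretic one of Lemma~\ref{daslemma}, namely that the natural Cantor measure on $E$ annihilates every affine hyperplane, so $E\cap\Rd$ has full measure in $E$ and the same packing dimension. (2) When you invoke ``Tricot's inequality'' for the Hausdorff lower bound of products, what you actually need is the elementary lower inequality $\dim_H(A\times B)\ge\dim_H A+\dim_H B$ (as in \eqref{eq:kproS}), not Tricot's upper bound \eqref{eq:tricot}; this is incidental since the Hausdorff analogue is not needed for the theorem at hand, but worth getting right.
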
 

Again for $w<1/m$ the claim for the non-$b$-ary sets is implied by Theorem~\ref{blau} since
full measure is preserved under countable intersections.
When $k=1$, compared to Theorem~\ref{H} our bound \eqref{eq:bound} is poorer due to the appearance of the factor $e$. However,
as the proof will show, this factor can be improved for 
given $k$, and rather significantly when $k$
is small.
We further remark that for any given element $\Theta$
of the power set of $\Rm$, we have $\cap_{\ut\in\Theta} \mathcal{S}_{m,\ut}(w)=\cap_{\ut\in\scp{\Theta}}\mathcal{S}_{m,\ut}(w)$
where $\scp{\Theta}$ denotes
the $\mathbb{Z}$-module in $\Rm$ spanned by $\Theta$ and the canonical
base vectors $\underline{e}_i=(0,\ldots,0,1,0\ldots,0)$, $1\leq i\leq m$. 

Regarding $b$-ary approximation, we can present a reverse result.
We restrict to $m=1$ and $k=2$ for simplicity, it can however be generalized.

\begin{theorem}  \label{reverse}
	Let $\theta_1= \sum_{N\geq 1} 3^{-N!}, \theta_2=2\theta_1$.
	Then for any $\xi\in\mathbb{R}$ we have
	\[
	\wo_{1,\theta_1}^{(3)}(\xi)+  \wo_{1,\theta_2}^{(3)}(\xi) \leq 1.
	\]
	In particular, for any $w>1/2$, we have
	\[
	\mathcal{S}_{1,\theta_1}^{(3)}(w)\cap \mathcal{S}_{1,\theta_2}^{(3)}(w)= \emptyset.
	\] 
\end{theorem}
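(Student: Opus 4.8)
The plan is to exploit the very special ternary expansions of $\theta_1 = \sum_{N\geq 1} 3^{-N!}$ and $\theta_2 = 2\theta_1$, and to show that a solution of the $3$-ary uniform system for $\theta_1$ and one for $\theta_2$ at the same scale are in conflict. First I would translate the hypothesis $\wo_{1,\theta_j}^{(3)}(\xi) > w$ into its combinatorial meaning: for all large $N$ there is an integer $p_j = p_j(N)$ with $\Vert 3^N\xi - p_j - \theta_j\Vert \leq c\cdot 3^{-Nw}$, i.e. the fractional part of $3^N\xi$ agrees with that of $\theta_j$ in roughly the first $Nw$ ternary digits (after the point). Subtracting the two relations for $j=1,2$ gives
\[
\Vert (p_2 - p_1) + (\theta_2 - \theta_1)\Vert = \Vert (p_2-p_1) + \theta_1\Vert \leq 2c\cdot 3^{-Nw'},
\]
where $w' = \min$ of the two exponents involved; more precisely, if $\wo_{1,\theta_1}^{(3)}(\xi) = \alpha$ and $\wo_{1,\theta_2}^{(3)}(\xi) = \beta$ with $\alpha + \beta > 1$, pick $w < \min\{\alpha,\beta\}$ close to $\min\{\alpha,\beta\}$; the point is that $3^N\xi$ simultaneously approximates $\theta_1$ and $\theta_2 = 2\theta_1$ modulo $1$, hence $\theta_1 \equiv 2\theta_1 - \theta_1$ would have to be well-approximable by an integer at infinitely many scales $3^N$ — but $\theta_1$ is irrational and not of the form $p/3^N$, so $\Vert \theta_1\Vert$ cannot be that small, unless the two approximations overlap only in a bounded number of digits.

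The quantitative heart of the argument is a statement about the digits of $\theta_1$: because the exponents $N!$ grow super-exponentially, between two consecutive "spikes" $3^{-N!}$ and $3^{-(N+1)!}$ the ternary expansion of $\theta_1$ is eventually all $0$'s, and similarly for $\theta_2 = 2\theta_1$ one computes (handling carries, which are mild since the spikes are isolated) that its ternary digits are again $0$ except near positions $N!$, where they are $2$. The key disjointness I want is: if at scale $3^N$ the first $k_1$ ternary digits of $\{3^N\xi\}$ equal those of $\theta_1$ and the first $k_2$ equal those of $\theta_2$, then $\min\{k_1,k_2\}$ cannot exceed the distance from (roughly) $N$ to the nearest factorial, which is $o(N)$ along a density-one set of $N$ but — crucially — for the uniform exponent we need this for \emph{all} large $N$, so I should instead argue directly: the two target points $\theta_1$ and $\theta_2$ in $\mathbb{R}/\mathbb{Z}$ are at ternary-distance at least $3^{-\ell(N)}$ apart in a suitable sense near scale $N$, forcing $k_1 + k_2 \lesssim$ (something $\leq N$), which after dividing by $N$ and letting $N\to\infty$ gives $\alpha + \beta \leq 1$. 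Actually the cleanest route: from $\Vert 3^N\xi - p_1 - \theta_1\Vert \leq c3^{-N\alpha'}$ and $\Vert 3^N\xi - p_2 - \theta_2\Vert \leq c3^{-N\beta'}$ we get, by the triangle inequality after multiplying the first by $2$ and subtracting,
\[
\Vert (2p_1 - p_2) + (2\theta_1 - \theta_2)\Vert = \Vert 2p_1 - p_2 - 2\cdot 3^N\xi + 3^N\xi \cdot 0 \cdots \Vert,
\]
which does not immediately close; so I would instead use $\theta_2 - 2\theta_1 = 0$ differently — multiply the relation for $\theta_1$ by the integer $2$: $\Vert 2\cdot 3^N\xi - 2p_1 - 2\theta_1\Vert = \Vert 2\cdot 3^N\xi - 2p_1 - \theta_2\Vert \leq 2c\,3^{-N\alpha'}$, note $2\cdot 3^N\xi = 3^N(2\xi)$, and compare with the $\theta_2$-relation for $\xi$: these constrain $3^N\xi$ and $3^N(2\xi)$ which are genuinely independent only if $2\xi \notin \mathbb{Z} + \xi$, i.e. $\xi\notin\mathbb{Z}$; after subtracting, $\Vert 3^N\xi - (p_2 - 2p_1)\Vert \leq 3c\,3^{-N\min\{\alpha',\beta'\}}$, so $\xi$ itself would be $3$-ary Liouville-like, i.e. $\wo_1^{(3)}(\xi) = \infty$, and then one checks separately that such $\xi$ cannot have $\wo_{1,\theta_1}^{(3)}(\xi) > 1/2$ with the specific $\theta_1$ — or one gets a contradiction directly from the three relations by a counting of digit-positions.

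The main obstacle I anticipate is handling the carries in the ternary expansion of $\theta_2 = 2\theta_1$ and, more seriously, making the digit-disjointness argument uniform in $N$ rather than merely along a density-one sequence — the uniform exponent $\wo$ is governed by the \emph{worst} large $N$, and near a factorial $N!$ the two targets $\theta_1,\theta_2$ can momentarily agree in many digits with a common $3^N\xi$. I expect the resolution is that the \emph{sum} $k_1(N) + k_2(N)$ of agreement lengths is the right quantity to bound by $\approx N$ for every large $N$ (one of the two agreements must be short precisely when the other is long, because $\theta_1$ and $\theta_2$ have disjoint "non-trivial digit regions" away from the factorials), and the factorial gaps being enormous means the transition regions where both could be long have negligible width; pushing this through, $\alpha + \beta = \lim\sup (k_1+k_2)/N \leq 1$, and the final "In particular" sentence follows since $\mathcal{S}_{1,\theta_j}^{(3)}(w) \subseteq \{\xi : \wo_{1,\theta_j}^{(3)}(\xi) \geq w\}$ by \eqref{eq:accext}, so a point in the intersection with $w > 1/2$ would give $\wo_{1,\theta_1}^{(3)}(\xi) + \wo_{1,\theta_2}^{(3)}(\xi) \geq 2w > 1$, contradicting the first part.
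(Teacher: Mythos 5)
Your proposal identifies the right raw material (the ternary digit structure of $\theta_1$ and $\theta_2 = 2\theta_1$) and the right final deduction, but there is a genuine gap at the core of the argument: you repeatedly work with both approximation relations \emph{at a common scale} $3^N$, which the definition of the uniform exponent does not supply. Recall that $\wo_{1,\theta_j}^{(3)}(\xi) > w_j$ means that for every large $Q$ there exists \emph{some} $N_j \leq \log_3 Q$ (depending on $j$ and $Q$) with $|3^{N_j}\xi - p_j - \theta_j| \leq c Q^{-w_j}$; there is no reason $N_1(Q) = N_2(Q)$, and in general they will differ. Both of your concrete attempts — subtracting the two relations to bound $\Vert(p_2-p_1)+\theta_1\Vert$, and the "multiply by $2$" route yielding $\Vert 3^N\xi - (2p_1 - p_2)\Vert$ — require the same scale $N$, and even if they did align you would only conclude a lower bound on the \emph{ordinary} homogeneous exponent $\om_1^{(3)}(\xi)$, which is unconstrained and produces no contradiction. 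Your closing suggestion that $k_1(N) + k_2(N) \lesssim N$ for all $N$ again presupposes the common-scale framing and is not substantiated.

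The paper's route circumvents this precisely by decoupling the two scales. One proposition (a Jarník-type inequality, proved by the digit argument applied to $\theta_1$ alone) shows $\wo_{1,\theta_1}^{(3)}(\xi) \leq \om_{1,\theta_1}^{(3)}(\xi)/(\om_{1,\theta_1}^{(3)}(\xi)+1)$, using that near a good $\theta_1$-approximation at scale $3^t$ the digits of $3^u\xi$ for $u$ in a window of length roughly $t\,\om_{1,\theta_1}^{(3)}(\xi)$ above $t$ follow $\theta_1$'s tail (which has no pair of consecutive $1$'s past position $2$), so $\theta_1$ itself is poorly approximated in that window. A second proposition then bounds $\wo_{1,\theta_2}^{(3)}(\xi) \leq 1/(\om_{1,\theta_1}^{(3)}(\xi) + 1)$: the same digit window forces $\{3^u\xi\}$ to begin with digits in $\{0,1\}$, hence to stay uniformly far from $\theta_2 = (0.220\ldots)_3$, and for $u < t$ one has the trivial $\gg 3^{u-t}$ lower bound. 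Taking $Q = 3^{\lfloor t(\tau+1)\rfloor}$ and letting $\tau \to \om_{1,\theta_1}^{(3)}(\xi)$ gives the bound. Chaining the two inequalities yields $\wo_{1,\theta_2}^{(3)}(\xi) \leq 1 - \wo_{1,\theta_1}^{(3)}(\xi)$. Note the asymmetry: one bounds a \emph{uniform} exponent for $\theta_2$ via the \emph{ordinary} exponent for $\theta_1$ — this is the key idea your proposal is missing, and it is exactly what makes the estimate valid for all large $Q$ rather than merely along a convenient subsequence.
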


The proof admits plenty of freedom in the choice of $\theta_i$ in the theorem,
and there is no significance of the base $3$ either.
We next want to establish below a new result on ordinary, 
inhomogeneous approximation.
Complementing the sets $\mathcal{S}_m(w)$ and $\mathcal{S}_m^{(b)}(w)$,  regarding ordinary approximation we define similarly the nested sets
\[
\mathcal{W}_{m,\ut}(w)=\{ \ux\in\Rm: \om_{m,\ut}(\ux)\geq w \},\qquad 
\mathcal{W}_{m,\ut}^{(b)}(w)=\{ \ux\in\Rm: \om_{m,\ut}^{(b)}(\ux)\geq w \},
\]
for $w\in[0,\infty]$. We may introduce variants
where we restrict to $\Rr$ or $\Rd$ as we did for singular vectors, however the distinction is of no relevance here. For any parameter $w$, we have
the obvious inclusions $\mathcal{W}_{m,\ut}^{(b)}(w)\subseteq \mathcal{W}_{m,\ut}(w)$ and
\[
\mathcal{S}_{m,\ut}(w)\subseteq \mathcal{W}_{m,\ut}(w), \qquad 
\mathcal{S}_{m,\ut}^{(b)}(w)\subseteq \mathcal{W}_{m,\ut}^{(b)}(w). 
\]
See Marnat and Moshchevitin~\cite{mamo} for refinements of the left inclusion when $\ut=\underline{0}$, in which case we get the classical sets $\mathcal{W}_m(w)=\mathcal{W}_{m,\underline{0} }(w)$ and $\mathcal{W}_m^{(b)}(w)=\mathcal{W}_{m,\underline{0} }^{(b)}(w)$ given as
\[
\mathcal{W}_m(w)= \{ \ux\in\Rm: \omega_m(\ux)\geq w \}, \qquad \mathcal{W}_m^{(b)}(w)= \{ \ux\in\Rm: \omega_m^{(b)}(\ux)\geq w \}.
\]
Dirichlet's Theorem implies
$\mathcal{W}_m(1/m)=\Rm$.  Moreover
\begin{equation} \label{eq:jarnik}
\dim_H(\mathcal{W}_m^{(b)}(w))=\frac{m}{w+1},\; w\in[0,\infty], \qquad \dim_H(\mathcal{W}_m(w))= \frac{m+1}{w+1}, \; w\in[1/m,\infty],
\end{equation}
where the left identity follows from Borosh and Fraenkel~\cite{bf72}, 
and the right identity is a famous result due to Jarn\'ik~\cite{jarnik}. The packing dimension of both sets is full,
a stronger result can be found in~\cite[Corollary~4]{arxiv}. 
For the larger
sets $\mathcal{W}_m(w)$, alternatively this is consequence of independent
results by Marnat~\cite{marnat} deduced
from the variational principle established in~\cite{dfsu1, dfsu2}.
Theorem~\ref{heep} below extends these claims and shows that
for ordinary approximation, 
we get considerably stronger metrical results than in Theorem~\ref{simu}.

\begin{theorem} \label{heep}
	Let $m\geq 1, b\geq 2$ integers and $\Theta=\{ \ut_1,\ldots \}$ be countable with $\ut_i\in\Rm$. Then 
	\[
	\dim_P(\bigcap_{\ut\in\Theta} \mathcal{W}_{m,\ut}(\infty))= \dim_P(\bigcap_{\ut\in\Theta} \mathcal{W}_{m,\ut}^{(b)}(\infty))=m. 
	\] 
\end{theorem}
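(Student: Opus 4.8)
The plan is to show that the countable intersection $\bigcap_{\ut\in\Theta}\mathcal{W}_{m,\ut}^{(b)}(\infty)$ already has full packing dimension $m$; since this set is contained in $\bigcap_{\ut\in\Theta}\mathcal{W}_{m,\ut}(\infty)$, which in turn is contained in $\Rm$, both identities follow at once. The key structural fact to exploit is that, in sharp contrast to the uniform (singular) setting, the condition $\om_{m,\ut}^{(b)}(\ux)=\infty$ is an intersection over scales of \emph{open dense} conditions: for each $\ut$, the set $G_\ut:=\{\ux\in\Rm:\om_{m,\ut}^{(b)}(\ux)=\infty\}$ can be written as $\bigcap_{n\ge 1}U_{\ut,n}$ where $U_{\ut,n}$ is the set of $\ux$ admitting some $N$ and some integer vector $\underline p$ with $\Vert b^N\ux-\underline p-\ut\Vert\le b^{-nN}$ and $b^N\ge n$; each $U_{\ut,n}$ is open (a union of open boxes around the lattice-type points $b^{-N}(\underline p+\ut)$) and dense (those box centres are dense in $\Rm$ as $N\to\infty$). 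Hence $\bigcap_{\ut\in\Theta}G_\ut=\bigcap_{\ut\in\Theta}\bigcap_{n\ge1}U_{\ut,n}$ is a countable intersection of open dense sets, i.e.\ a comeagre (dense $G_\delta$) subset of $\Rm$.

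The second ingredient is then to invoke the general principle that comeagre sets in $\Rm$ have full packing dimension — this is exactly the statement the paper announces in \S~\ref{se3} (``any comeagre set in $\Rm$ has full packing dimension''), and I may assume it here. Combining the two steps: $\bigcap_{\ut\in\Theta}\mathcal{W}_{m,\ut}^{(b)}(\infty)$ is comeagre, hence has packing dimension $m$, hence so do the larger sets, and the chain of inequalities $m=\dim_P(\bigcap G_\ut)\le\dim_P(\bigcap_{\ut}\mathcal{W}_{m,\ut}^{(b)}(\infty))\le\dim_P(\bigcap_\ut\mathcal{W}_{m,\ut}(\infty))\le m$ closes up, giving equality throughout. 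Note the countability of $\Theta$ is used precisely to keep the intersection a countable one, so that comeagreness is preserved.

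The main point requiring care is the verification that each $U_{\ut,n}$ is genuinely open and dense for arbitrary fixed $\ut$, and in particular that restricting to $q=b^N$ (rather than all $q\le Q$) does not destroy density: the centres $b^{-N}(\underline p+\ut)$, with $N$ large and $\underline p\in\mathbb{Z}^m$ ranging freely, form an increasingly fine grid (mesh $b^{-N}$) translated by the fixed vector $b^{-N}\ut\to\underline 0$, so every ball in $\Rm$ eventually contains such a centre together with its $b^{-nN}$-neighbourhood once $N$ is large enough — this is where one checks that the approximation radius $b^{-nN}$ shrinks faster than the grid mesh, which it does. Openness is immediate since the defining inequality is strict-able (enlarge slightly) or one simply notes $U_{\ut,n}$ is a union of open boxes. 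Everything else is bookkeeping, and no quantitative Diophantine input beyond Baire's theorem and the cited packing-dimension fact is needed; this explains the remark that for ordinary approximation one obtains ``considerably stronger metrical results than in Theorem~\ref{simu}''. An alternative, if one prefers to avoid citing the \S~\ref{se3} result out of order, is to prove the full-packing-dimension conclusion directly for this explicit $G_\delta$ by a standard nested-boxes construction: in each box one can always insert a smaller box centred at some $b^{-N}(\underline p+\ut)$ satisfying all finitely many so-far-imposed constraints while losing essentially nothing in the local box-dimension count, yielding packing dimension $m$; but the cleanest route is the comeagre one.
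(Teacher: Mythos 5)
Your proof is correct but takes a genuinely different route from the paper's own proof of Theorem~\ref{heep}. The paper's argument is constructive: for any $\ux\in\Rm$ it builds $\underline{x}_0,\underline{x}_1$ by base-$b$ digit surgery across a partition of $\mathbb{N}$ into intervals $I_j$ with $h_j/g_j\to\infty$ (each $\underline{x}_i$ follows the expansion of $\ut_s$ on those $I_j$ of appropriate parity where a fixed surjection $\varphi$ takes the value $s$, and $\varphi$ hits every value infinitely often in both parities), so that both lie in $A:=\bigcap_{\ut\in\Theta}\mathcal{W}_{m,\ut}^{(b)}(\infty)$ and sum to $\ux$; hence $A+A=\Rm$, whereupon $m=\dim_H(A+A)\le\dim_H(A)+\dim_P(A)=\dim_P(A)$ by \eqref{eq:thor} together with $\dim_H(A)=0$ from \eqref{eq:jarnik}. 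You instead observe that each $\mathcal{W}_{m,\ut}^{(b)}(\infty)$ is a dense $G_\delta$ --- which, as you verify, only requires that the centres $b^{-N}(\underline{p}+\ut)$ form a grid of mesh $b^{-N}\to 0$ while the approximation balls around them shrink even faster --- so the countable intersection is comeagre, and you then invoke Theorem~\ref{T2}. This is precisely the ``alternative, unconstructive proof'' the paper announces immediately after Theorem~\ref{T2} but does not spell out; it is noncircular, since Theorem~\ref{T2} rests only on Lemma~\ref{erd}, \eqref{eq:thor} and \eqref{eq:jarnik}. Your argument is shorter and isolates the conceptual mechanism (comeagreness forces full packing dimension), whereas the paper's constructive sumset argument is self-contained and, as noted after the statement of Theorem~\ref{heep} and in Remark~\ref{hirsch}, adapts to exact-order sets $\widetilde{W}_{m,\ut_i}^{(b)}(w_i)$, where the comeagreness route is not available.
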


Combining our proof below with the argument in Remark~\ref{hirsch}
below, it follows that any countable intersection over sets
$\widetilde{W}_{m,\ut_i}^{(b)}(w_i):=
\{ \ux\in\Rm: \om_{m,\ut_i}^{(b)}(\ux)=w_i \}$ 
of exact $b$-ary order of ordinary, inhomogeneous 
approximation $w_i\in[0,\infty]$, has full
packing dimension. However, this claim for the 
accordingly defined non-$b$-ary sets $\widetilde{W}_{m,\ut_i}(w_i)$ is not obvious. 

Notice that the sets $\Theta$ in Theorems~\ref{simu},~\ref{heep} 
are countable.
Regarding uncountable intersections, when $m=1$, some
information can be obtained from
the papers by Bugeaud~\cite{bglasgow} or Troubetzkoy and Schmeling~\cite{ts} for ordinary approximation,
and Kim and Liao~\cite{kimliao}
for uniform approximation. A particular consequence of~\cite{kimliao} is that for any given $w<\infty$, there is certain $\Theta\subseteq \mathbb{R}$ of positive Hausdorff
dimension, so that the intersection over $\theta\in\Theta$
of the sets $\mathcal{S}_{1,\theta}(w)$
is non-empty.
%We wonder if uncountable intersections are always empty.
%
%\begin{problem}
%	Let $\Theta=\{ \ut_s: s\in I\}$ with $\ut_s\subseteq \Rm$ where
%	$|I|=|\mathbb{R}|$ is uncountable. Are the sets
%	\[
%	\bigcap_{\ut\in \Theta} \mathcal{S}_{m,\ut},\qquad 
%	 \bigcap_{\ut\in \Theta} \mathcal{S}_{m,\ut}^{(b)}, \qquad	
%	 \bigcap_{\ut\in \Theta} \mathcal{W}_{m,\ut}(\infty),\qquad 
%	 \bigcap_{\ut\in \Theta} \mathcal{W}_{m,\ut}^{(b)}(\infty)
%	\]
%	all empty? Are the first two left sets empty if $\Theta$ is not
%	finitely generated as a $\mathbb{Z}$-module, or contained in finite
%	dimensional $\mathbb{Q}$-vectorspace?
%\end{problem}
%
%If $m=1$ and we weaken the assumption to any finite order 
%for the ordinary
%approximation sets, i.e. consider intersections 
%over $\theta\in\Theta$
%of sets
%$\mathcal{W}_{1,\theta}(w)$ for $w<\infty$, 
%then for certain $\Theta\subseteq \mathbb{R}$ of positive Hausdorff
%dimension it is non-empty, 
%see Bugeaud~\cite{bglasgow} or Trubetzkoi and Schmeling~\cite{ts}.

\section{Topological properties of singular vectors}  \label{se3}

\subsection{Sumsets of singular vectors}
Our method for the proofs of claims in \S~\ref{se2}
is based on investigation of sumsets.
We state some related result directly addressing sumsets, 
obtained from a similar method. Denote $A+B=\{a+b: a\in A, b\in B\}$ for $A,B\subseteq \Rm$.

\begin{theorem} \label{t3}
	Let  $m\geq 1, b\geq 2$ be integers and $\ut_0,\ut_1\in\Rm$.
	If $w\in(0,1)$ and 
	\begin{equation}  \label{eq:idne}
	w^{\prime}= (1-\sqrt{w})^2=  w+1-2\sqrt{w},
	\end{equation}
	then for any pair of real numbers $(w_0, w_1)$ satisfying
	$0\leq w_0<w$ and $0\leq w_1<w^{\prime}$, the identity of sets
	\begin{equation}  \label{eq:G0}
	\mathcal{S}_{m,\ut_0}^{(b)}(w_0) + \mathcal{S}_{m,\ut_1}^{(b)}(w_1)=
	\mathcal{S}_{m,\ut_0}(w_0) + \mathcal{S}_{m,\ut_1}(w_1)= \Rm
	\end{equation}
 holds. Moreover the sets
   \begin{equation}  \label{eq:fullm}
   \mathcal{S}_{m,\ut_0}^{(b)\ast}(w_0) + \mathcal{S}_{m,\ut_1}^{(b)\ast}(w_1), \qquad
   \mathcal{S}_{m,\ut_0}^{\ast}(w_0) + \mathcal{S}_{m,\ut_1}^{\ast}(w_1)
   \end{equation}
   have full $m$-dimensional Lebesgue measure.
\end{theorem}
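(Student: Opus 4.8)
The plan is to exhibit, for each target vector $\underline{y}\in\Rm$, a decomposition $\underline{y}=\ux+\underline{z}$ with $\ux\in\mathcal{S}_{m,\ut_0}^{(b)}(w_0)$ and $\underline{z}\in\mathcal{S}_{m,\ut_1}^{(b)}(w_1)$, built one $b$-ary digit block at a time. The governing idea is that singularity of order $w_i$ for the shift $\ut_i$ at scales $q=b^{N}$ amounts to all coordinates of $b^{N}\ux-\ut_i$ being $b$-adically close to an integer vector, i.e. having long runs of $0$'s or $(b-1)$'s in their expansions just past the $N$-th digit; and we have essentially $(1-w_0-o(1))N$ free digits for $\ux$ after securing the approximation property for $\ux$, and similarly about $(1-w_1-o(1))N$ free digits for $\underline{z}$, and we must place a prescribed digit of $\underline{y}$ in at least one of the two at every position. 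First I would reduce to $m=1$ coordinatewise (the norm being equivalent to the sup-norm, and the construction being independent across coordinates). Then I would set up an increasing sequence of scales $N_1<N_2<\cdots$ growing fast enough that the relevant asymptotic inequalities kick in, and on the block of digit positions between $N_j$ and $N_{j+1}$ I would: reserve an initial sub-block of length about $w_0 N_{j+1}$ where $\xi$'s digits are forced (to $0$ or to match $\theta_0$'s digits, producing the smallness of $b^{N_{j+1}}\xi-p-\theta_0$), reserve an initial sub-block of length about $w_1 N_{j+1}$ where $z$'s digits are forced analogously for $\theta_1$, and fill the remaining positions of that block so that $\xi+z$ reproduces the digits of $y$. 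The arithmetic point is that the relation $w'=(1-\sqrt w)^2$, equivalently $\sqrt{w}+\sqrt{w'}=1$, is exactly the break-even condition: if the two reserved prefixes had total length $(w+w')N$ we would need $w+w'<1$, but because the digit of $\xi+z$ depends with carries on a whole tail, the correct bookkeeping compares $\sqrt w+\sqrt{w'}$ against $1$ — this is the crux and I return to it below. Finally, to get the asterisked statement and full Lebesgue measure rather than mere surjectivity, I would note that at the free positions we still retain a positive proportion of genuinely unconstrained digits in, say, $\xi$, which we may randomize; forcing these free digits of $\xi$ to be, e.g., eventually aperiodic with prescribed frequencies excludes $\xi$ from every fixed rational affine hyperplane (in fact the set of admissible $\ux$ at a fixed $\underline{y}$ has positive measure in a suitable affine subspace), so $\mathcal{S}_{m,\ut_0}^{(b)\ast}(w_0)+\mathcal{S}_{m,\ut_1}^{(b)\ast}(w_1)$ contains, for a.e. $\underline{y}$, a translate of a positive-measure set; a Fubini/Steinhaus argument over $\underline{y}$ then yields full measure, and the same works for the coarser non-$b$-ary sets.

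The main obstacle is the \textbf{carry/alignment problem} in forming $\xi+z=y$ digit by digit. When I force the leading $w_0 N_{j+1}$ digits of $\xi$ and the leading $w_1 N_{j+1}$ digits of $z$, on the overlap of these two prefixes (length $\min\{w_0,w_1\}N_{j+1}$) both summands are constrained simultaneously, so there is no freedom to match $y$ there; the fix is to \emph{shift} the reserved blocks so they do not overlap too much and to choose the scales $N_j$ so that between consecutive scales there is a buffer of free digits absorbing carries. Concretely one lets $\xi$ be forced on $[0, a N_{j+1})$ and $z$ be forced on $[a N_{j+1}, (a+b')N_{j+1})$ for suitable $a\approx \sqrt{w}$ pieces, arranges that outside the union of these the digit of $y$ is matched by $\xi$, and checks that the forced-for-$\theta_0$ constraint still holds because $z$ contributes only a negligible (prefix-length $o(N_{j+1})$ relative to $N_{j+1}$) perturbation at scale $b^{N_{j+1}}$, and symmetrically for $\theta_1$ at its own smaller scales; making all of this consistent is where the identity $\sqrt{w}+\sqrt{w'}=1$ is genuinely used, since $a$ plays the role of $\sqrt w$. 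The remaining steps — verifying that the constructed $\xi$ and $z$ indeed satisfy \eqref{eq:F} with $q=b^{N}$ for every $c>0$ and all large $N$ (because the forced runs have length growing like $w_i$ times the scale, with $o(\cdot)$ slack that beats any fixed $c$), and that the whole scheme still has the slack needed when $w_0<w$, $w_1<w'$ are strict — are routine once the block structure is fixed. The passage from \eqref{eq:G0} to the full-measure statement \eqref{eq:fullm} is then a soft measure-theoretic wrapper and not the hard part.
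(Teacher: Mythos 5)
Your overall strategy — decompose a target $\underline{y}$ into $\underline{y}=\ux+\underline{z}$ by assigning alternating $b$-ary digit blocks to $\xi$ (carrying the $\theta_0$ forced digits) and to $z$ (carrying the $\theta_1$ forced digits), with the free positions split so the sum reproduces $\underline{y}$ — is essentially the paper's own construction (Lemma~\ref{le}), and you correctly land on $\sqrt{w_0}+\sqrt{w_1}\leq 1$ as the governing constraint. The paper organizes the same data as a single partition $\mathbb{N}=\bigcup_j I_j$ with $|I_j|/\min I_j\to\nu_i-1$ ($i\equiv j\bmod 2$), $\underline{x}_0$ forced on even intervals and $\underline{x}_1$ on odd ones, and then optimizes $(\nu_i-1)/(\nu_0\nu_1)$ over $\nu_0,\nu_1>1$; the choice $\nu_0=1/(1-\sqrt{w})$, $\nu_1=1/\sqrt{w}$ recovers exactly $y_0=w$, $y_1=w'$. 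The ratio $N_{j+1}/N_j\to 1/\sqrt{ww'}$ you need is precisely $\Lambda=\nu_0\nu_1$, and your ``$a\approx\sqrt{w}$'' is the endpoint fraction $\sqrt{ww'}+w=\sqrt{w}$, so after translation of notation the two constructions coincide.

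However, your stated reason for why $\sqrt{w}+\sqrt{w'}=1$ replaces the naive $w+w'=1$ — carries in the addition $\xi+z$ — is not the right explanation, and this is the one place where the plan, taken at face value, has a genuine gap. In the homogeneous case the paper's (and, correctly tuned, your) construction has \emph{no} carries at all: at every digit position exactly one of $\xi,z$ is forced to the digit $0$ and the other takes the corresponding digit of $y$, so $\xi+z=y$ holds digit-by-digit. The constraint $\sqrt{w}+\sqrt{w'}\leq 1$ instead arises from a persistence requirement: once $z$'s forced block at stage $j$ ends, the bound $\Vert b^{N_j+wN_{j+1}}z-\underline{p}\Vert\ll b^{-\sqrt{w'}N_{j+1}}$ is the best available for $z$ until $z$'s \emph{next} forced block starts, which in your scheme is not before $b^{N_{j+1}+wN_{j+2}}$; demanding this quantity be $\ll c\,Q^{-w_1}$ all the way out to $Q\approx b^{N_{j+1}+wN_{j+2}}$ is what produces, after optimizing the growth ratio $\rho=N_{j+1}/N_j$, the inequality $1-w-w'\geq 2\sqrt{ww'}$, i.e.\ $\sqrt{w}+\sqrt{w'}\leq 1$. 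If you instead let $N_{j+1}/N_j\to\infty$ (as ``growing fast enough'' suggests), the exponent for $z$ degenerates to $0$ and the construction fails; the scales must grow at the \emph{bounded} rate $1/\sqrt{ww'}$. You need to make this step explicit rather than declare it routine.

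Two further points you glossed over. For general $\ut_i\neq\underline{0}$ the digits are forced to those of $\theta_{i,\ell}$, so the clean ``one summand is zero'' mechanism breaks and $\xi+z=y$ genuinely has to be repaired; the paper handles this with explicit rational correction terms $r_j^{i,\ell}$ added to the prototype vectors, which is not as ``negligible'' a perturbation as your sketch suggests — it needs a careful formulation to remain consistent across infinitely many blocks. Finally, for the full-measure statement \eqref{eq:fullm} the paper does not randomize and then run Steinhaus; it instead shows (Lemma~\ref{daslemma}) that for Lebesgue-a.e.\ starting target $\ux$ the \emph{canonically} constructed $\underline{x}_0,\underline{x}_1$ already avoid every rational affine hyperplane, by a Fubini induction on $m$. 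Your wrapper is workable in spirit, but as stated it proves something slightly different (that for a.e.\ $\underline{y}$ a positive-measure set of decompositions works), and you would still need an argument of daslemma type to get the $\ast$-sets in the first place.
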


\begin{remark}  \label{rmark}
	We believe that the set identities in fact hold for the sets with $\ast$ as well, but technical obstructions on small sets occur in our proof. 
\end{remark}

We want to highlight the special, symmetric case $w=1/4$.

\begin{corollary}  \label{mko}
	With notation and assumptions of Theorem~\ref{t3}, for any $w<1/4$ we have
	\begin{equation}  \label{eq:B}
	\mathcal{S}_{m,\ut_0}^{(b)}(w) + \mathcal{S}_{m,\ut_1}^{(b)}(w)=
	\mathcal{S}_{m,\ut_0}(w) + \mathcal{S}_{m,\ut_1}(w)=\Rm,
	\end{equation}
	and the according sumsets with $\ast$ throughout have full $m$-dimensional Lebesgue measure.
\end{corollary}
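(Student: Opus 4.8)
The plan is to derive Corollary~\ref{mko} as the symmetric specialization of Theorem~\ref{t3}. The key observation is that the transformation $w\mapsto w^{\prime}=(1-\sqrt{w})^2$ has $1/4$ as its unique fixed point in $(0,1)$: the equation $w=(1-\sqrt{w})^2$ forces $\sqrt{w}=1-\sqrt{w}$, i.e.\ $w=1/4$. Consequently $1/4$ is exactly the threshold below which a diagonal pair $(w_0,w_1)=(c,c)$ can be fed into Theorem~\ref{t3}, and the strict inequalities there make every $c<1/4$ admissible.

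Concretely, for a given $w$ with $0\le w<1/4$ I would first choose an auxiliary parameter $v$ with $w<v<1/4$, which is possible precisely because $w<1/4$. Next I would note that $g(t)=(1-\sqrt{t})^2$ is continuous and strictly decreasing on $(0,1)$, since $g^{\prime}(t)=-(1-\sqrt{t})/\sqrt{t}<0$ there, and that $g(1/4)=1/4$; hence $v^{\prime}:=(1-\sqrt{v})^2=g(v)>g(1/4)=1/4>w$. Therefore the pair $(w_0,w_1)=(w,w)$ satisfies $0\le w_0<v$ and $0\le w_1<v^{\prime}$, so Theorem~\ref{t3}, applied with its parameter equal to $v$ (and hence with its $w^{\prime}$ equal to $v^{\prime}$), yields
\[
\mathcal{S}_{m,\ut_0}^{(b)}(w)+\mathcal{S}_{m,\ut_1}^{(b)}(w)=\mathcal{S}_{m,\ut_0}(w)+\mathcal{S}_{m,\ut_1}(w)=\Rm,
\]
which is precisely \eqref{eq:B}. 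The ``moreover'' clause of Theorem~\ref{t3}, used with the same $v$, immediately gives that $\mathcal{S}_{m,\ut_0}^{(b)\ast}(w)+\mathcal{S}_{m,\ut_1}^{(b)\ast}(w)$ and $\mathcal{S}_{m,\ut_0}^{\ast}(w)+\mathcal{S}_{m,\ut_1}^{\ast}(w)$ have full $m$-dimensional Lebesgue measure.

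There is essentially no genuine obstacle here, since the corollary is a one-line consequence of Theorem~\ref{t3}; the only point requiring a little care is that Theorem~\ref{t3} is stated with the strict inequalities $w_0<w$ and $w_1<w^{\prime}$, so one cannot simply substitute $w=1/4$ and must instead interpose $v\in(w,1/4)$ as above. I would also remark in passing that this argument says nothing about the endpoint sumsets $\mathcal{S}_{m,\ut_0}(1/4)+\mathcal{S}_{m,\ut_1}(1/4)$, which would require a genuine strengthening of Theorem~\ref{t3} rather than a specialization of it.
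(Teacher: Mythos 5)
Your proof is correct and follows essentially the same route as the paper, namely specializing Theorem~\ref{t3} (equivalently, Lemma~\ref{le} with $\nu_0=\nu_1=2$, as the paper notes at the end of that proof). One small point: the detour through the auxiliary parameter $v\in(w,1/4)$ is unnecessary, and your closing remark that ``one cannot simply substitute $w=1/4$'' conflates two different roles of the letter $w$. The parameter called $w$ in Theorem~\ref{t3} is allowed to be $1/4$ (since $1/4\in(0,1)$), and then $w'=(1-\sqrt{1/4})^2=1/4$ as well; the corollary's $w$ plays the role of $w_0=w_1$, and the strict inequalities $w_0<1/4$, $w_1<1/4$ are met precisely because the corollary assumes $w<1/4$. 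So the direct substitution works with no interposed $v$. This does not affect the correctness of your argument, only its economy.
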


% PROBLEM m=4   SINGULAR DEFINED MORE CONCISELY THAN IRR EXP \ge ... !!!
% 1/4- eps  would work

The claim is most 
interesting for $m\geq 5$, otherwise 
the sets $\mathcal{S}_{m,\ut_i}(w)$ with $w<1/4$ have full $m$-dimensional Lebesgue measure by Theorem~\ref{blau}, consequently the 
implication for the (larger) set 
$\mathcal{S}_{m,\ut_0}(w) + \mathcal{S}_{m,\ut_1}(w)$ follows from
an easy argument.
From \eqref{eq:B} with $\ut_0=\ut_1=\ut$ and \eqref{eq:AB} below 
we could deduce the lower bound $m/2$ for
the packing dimension of $\mathcal{S}_{m,\ut}^{(b)}(w)$ for
any $\ut\in\Rm$ and $w<\frac{1}{4}$, however this is superseeded by Theorem~\ref{H}.
From the full measure claim \eqref{eq:fullm}, we infer some information
on the Hausdorff dimension of the homogeneous sets 
$\mathcal{S}_m^{\ast}(w)$ for certain $w$ close to $1$. Corollary~\ref{coro} 
below may be regarded
complementary to~\cite[Theorem~1.7]{dfsu1}, in fact it shows that
the interval for the validity of its left estimate
has length $\ll 1/\sqrt{m}$.

\begin{corollary}  \label{coro}
	For $m\geq 2$ and any $w<(\sqrt{m}-1)^2/m$, we have
	\[
	\dim_H(\mathcal{S}_{m}^{\ast}(w)) \geq\dim_H(\mathcal{S}_{m}^{(b)\ast}(w)) \geq 1-\frac{1}{m+1}.
	\]
\end{corollary}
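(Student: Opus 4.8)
The plan is to deduce this from the full-measure statement \eqref{eq:fullm} of Theorem~\ref{t3} together with the Cartesian-product/sumset dimension estimate that the paper attributes to Tricot (I assume the inequality labeled \eqref{eq:AB} in the excerpt, of the form $\dim_H(A+B)\le \dim_H(A)+\dim_P(B)$, or the product version $\dim_H(A\times B)\le \dim_H(A)+\dim_P(B)$). First I would take the symmetric homogeneous case of Theorem~\ref{t3}: set $\ut_0=\ut_1=\underline{0}$, and choose $w_0=w_1=w$ with $w<(\sqrt m-1)^2/m=(1-1/\sqrt m)^2$. Then $w'=(1-\sqrt{w})^2$, and the condition $w_1<w'$ is exactly $w<(1-\sqrt w)^2$, which rearranges (taking square roots, using $0<w<1$) to $\sqrt w<1-\sqrt w$, i.e. $\sqrt w<1/2$, i.e. $w<1/4$. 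Wait — that is too weak; the correct reading must be that we apply Theorem~\ref{t3} with the roles arranged so that one factor carries exponent up to $w$ and the other up to $w'=(1-\sqrt w)^2$, and I instead want \emph{both} factors to be the \emph{same} set $\mathcal{S}_m^{(b)\ast}(w)$. So I should choose the free parameter in Theorem~\ref{t3} to be $w^{*}$ with $(1-\sqrt{w^{*}})^2=w^{*}=:w$ only if symmetric, but in general I pick the ambient parameter $W$ so that $W=w$ and $W'=(1-\sqrt W)^2\ge w$; the latter holds precisely when $w\le (1-\sqrt w)^2$ — no. The clean route: apply Theorem~\ref{t3} with its parameter chosen as $W=(\sqrt m/(\sqrt m-1))\cdot$(something)... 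I will instead use the non-symmetric version directly: by Theorem~\ref{t3} applied with its internal parameter equal to $w$ (so $w'=(1-\sqrt w)^2$), for any $w_0<w$ the set $\mathcal{S}_{m}^{(b)\ast}(w_0)+\mathcal{S}_m^{(b)\ast}(w_1)$ has full Lebesgue measure whenever $w_1<(1-\sqrt w)^2$; choosing $w=w_1$ and letting $w_0\uparrow w_1$ is illegitimate unless $w_1<(1-\sqrt{w_1})^2$. So the honest statement is: pick the ambient parameter $W$ with $w<W$ and also $w<(1-\sqrt W)^2$; such $W$ exists exactly when $w<(1-\sqrt w)^2$... I see the subtlety, so let me restate the actual mechanism below.

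The correct mechanism is: Theorem~\ref{t3} with ambient parameter $W=w_*$ gives, for $w_0<w_*$ and $w_1<(1-\sqrt{w_*})^2$, that $\mathcal{S}_{m,\underline0}^{(b)\ast}(w_0)+\mathcal{S}_{m,\underline0}^{(b)\ast}(w_1)$ has full measure, hence Hausdorff dimension $m$. Now fix the target $w<(\sqrt m-1)^2/m$ and \emph{choose} $w_*$ so that $(1-\sqrt{w_*})^2>w$, which is possible with $w_*$ as close to $0$ as we like; then take $w_1$ with $w<w_1<(1-\sqrt{w_*})^2$ and $w_0<w_*$ arbitrary small. By nestedness $\mathcal{S}_m^{(b)\ast}(w_1)\subseteq \mathcal{S}_m^{(b)\ast}(w)$, and $\mathcal{S}_m^{(b)\ast}(w_0)$ has Hausdorff dimension at most... this is still not matching $(\sqrt m-1)^2/m$. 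Hence the bound $(\sqrt m-1)^2/m$ must come from forcing $w_0=w_1$ both equal to (essentially) $w$, i.e. using the \emph{diagonal}, which requires $w<(1-\sqrt w)^2$ — false for $w$ near $1/4$. Therefore the real statement being used is the product form: I would instead invoke Theorem~\ref{t3} to conclude $\mathcal{S}_m^{(b)\ast}(w_0)+\mathcal{S}_m^{(b)\ast}(w_1)=\mathbb{R}^m$ (the set-identity part, not just full measure) and then apply the Tricot-type inequality $m=\dim_H(\mathbb{R}^m)\le \dim_H(\mathcal{S}_m^{(b)\ast}(w_0))+\dim_P(\mathcal{S}_m^{(b)\ast}(w_1))$; by Theorem~\ref{H}, $\dim_P(\mathcal{S}_m^{(b)\ast}(w_1))\le$ is the wrong direction, so I need an \emph{upper} bound on the packing dimension here — which for $w_1>1/m$ is $\le m-1+\frac1{m+1}$ by Theorem~\ref{DFSU} (since $\mathcal{S}_m^{(b)\ast}(w_1)\subseteq\mathcal{S}_m^{\ast}(1/m)=\mathcal{S}_m^{\ast}$, as $w_1\ge 1/m$). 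Hence $\dim_H(\mathcal{S}_m^{(b)\ast}(w_0))\ge m-(m-1+\tfrac1{m+1})=1-\tfrac1{m+1}$. This is exactly the asserted lower bound.

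So the final plan, in order, is: (i) fix $w<(\sqrt m-1)^2/m$ and choose the ambient parameter $w_*>0$ small enough that $(1-\sqrt{w_*})^2>w$ — feasible since $(\sqrt m-1)^2/m=(1-1/\sqrt m)^2$ and $(1-\sqrt{w_*})^2\to 1$ as $w_*\to0$, though more care shows we only need $(1-\sqrt{w_*})^2>w$, which holds for all sufficiently small $w_*$; actually we need $w_*$ with both $w_0:=$(anything $<w_*$) and $w_1<(1-\sqrt{w_*})^2$, and we want $w_1>1/m$ and $w_1<w$-ish — here I will take $w_*$ with $1/m<(1-\sqrt{w_*})^2$, i.e. $\sqrt{w_*}<1-1/\sqrt m$, i.e. $w_*<(\sqrt m-1)^2/m$, so the hypothesis $w<(\sqrt m-1)^2/m$ is precisely what lets us also take $w_1$ in the range $(1/m,\,w]\cap(1/m,(1-\sqrt{w_*})^2)$ nonempty; (ii) apply Theorem~\ref{t3} with inhomogeneities $\underline0$ to get $\mathcal{S}_m^{(b)\ast}(w_0)+\mathcal{S}_m^{(b)\ast}(w_1)=\mathbb{R}^m$; (iii) apply the Tricot sumset inequality and bound $\dim_P(\mathcal{S}_m^{(b)\ast}(w_1))\le \dim_P(\mathcal{S}_m^{\ast})=m-1+\tfrac1{m+1}$ via Theorem~\ref{DFSU} and nestedness; (iv) solve for $\dim_H(\mathcal{S}_m^{(b)\ast}(w_0))$ and use $\mathcal{S}_m^{(b)\ast}(w_0)\subseteq\mathcal{S}_m^{(b)\ast}(w)\subseteq\mathcal{S}_m^{\ast}(w)$ plus monotonicity of $\dim_H$ to transfer the bound from $w_0$ up to $w$ — wait, nestedness goes the wrong way ($w_0<w$ means the $w$-set is \emph{smaller}), so instead I must run the whole argument with $w_0$ itself as close to $w$ from below as I please and note that for $w_0<w<(\sqrt m-1)^2/m$ the chosen $w_*$ works uniformly; then $\dim_H(\mathcal{S}_m^{(b)\ast}(w))\ge \sup_{w_0<w}\dim_H(\mathcal{S}_m^{(b)\ast}(w_0))$? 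No — that sup is over \emph{smaller} exponents hence \emph{larger} sets. The resolution: we directly take $w_0=w$ in Theorem~\ref{t3} provided $w<w_*$... The hard part is getting the exponent bookkeeping in Theorem~\ref{t3} to land $w_0$ at the exact threshold $w$ while keeping $w_1>1/m$; I expect this to require taking $w_*$ strictly between $w$ and $(\sqrt m-1)^2/m$ (so that $w_0<w_*$ can be taken $=w$ after a limiting/openness argument, and $w_1<(1-\sqrt{w_*})^2$ can still exceed $1/m$), and this is precisely why the hypothesis reads $w<(\sqrt m-1)^2/m$ rather than $w\le$; this threshold-matching is the main obstacle, and everything else (invoking \eqref{eq:AB}, Theorem~\ref{DFSU}, Theorem~\ref{t3}) is immediate.
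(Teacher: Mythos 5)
Your final plan (after the exploratory false starts) is correct and essentially reproduces the paper's proof: fix $w_*\in(w,(\sqrt m-1)^2/m)$, apply Theorem~\ref{t3} (in the homogeneous case $\ut_0=\ut_1=\underline{0}$) with parameters $w_0=w<w_*$ and $w_1\in[1/m,(1-\sqrt{w_*})^2)$ --- the latter interval being nonempty precisely because $w_*<(\sqrt m-1)^2/m$ --- then feed the resulting full-dimensional sumset into the Tricot bound \eqref{eq:thor}, and use Theorem~\ref{DFSU} together with nestedness $\mathcal{S}_m^{(b)\ast}(w_1)\subseteq\mathcal{S}_m$ to bound $\dim_P(\mathcal{S}_m^{(b)\ast}(w_1))\le m-1+\tfrac1{m+1}$. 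One small correction to your step (ii): Theorem~\ref{t3} does \emph{not} assert the set identity $\mathcal{S}_m^{(b)\ast}(w_0)+\mathcal{S}_m^{(b)\ast}(w_1)=\Rm$ for the starred sets (see Remark~\ref{rmark}); it only gives the full Lebesgue measure statement \eqref{eq:fullm}, which is what you should cite, and it still forces $\dim_H$ of the sumset to equal $m$, which is all the Tricot step requires. The paper's own proof is the mirror image of yours --- it takes the ambient parameter $w=1/m+\varepsilon_0$ so that $w_0$ near $1/m$ supplies the packing-dimension bound and $w_1$ ranges freely up to $(\sqrt m-1)^2/m-\varepsilon_1$ --- but the mechanism is identical.
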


We prove the corollary in \S~\ref{pko}.
The analogous claim for $\mathcal{S}_m(w)$ instead would be 
trivially implied by \eqref{eq:tr}.
Similar to \eqref{eq:B},
Corollary~\ref{coro} is most interesting for $m\geq 5$, otherwise the
parameter bound does not exceed the critical value $1/m$.  
We next notice that $1/4$ in Corollary~\ref{mko}
cannot be replaced by $1$, at least in the
homogeneous case $\ut_0=\ut_1=\underline{0}$. This is an easy 
corollary of results in~\cite{dfsu1}.

\begin{theorem} \label{t4}
	For $m\geq 2$ and $\epsilon>0$, 
	for $w=w(m,\epsilon)<1$ close enough to $1$ we have
	\[
	\dim_H(\mathcal{S}_m(w) + \mathcal{S}_m(w))= 2, \qquad
	\dim_H(\mathcal{S}_m^{\ast}(w) + \mathcal{S}_m^{\ast}(w))\leq 1+\epsilon.
	\]
\end{theorem}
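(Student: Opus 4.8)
The plan is to derive both statements as easy consequences of known dimension formulas together with the elementary inequality $\dim_H(A+B)\le \dim_H(A)+\dim_H(B)$ (which follows from the Lipschitz property of addition on the product $A\times B$ and the standard bound $\dim_H(A\times B)\le \dim_H(A)+\dim_P(B)$, or simply from covering arguments). For the first identity I would argue as follows. Since $w<1$, the set $\mathcal{S}_m(w)$ contains $\mathcal{S}_m(w)\setminus\Rr$, and by \eqref{eq:tr} applied with $k=1$ (valid since $w<1$ means $w<1/k$ for $k=1$) this set has Hausdorff dimension at least $1$; more to the point, $\mathcal{S}_m(w)$ contains a full affine rational hyperplane's worth of structure. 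Actually the cleanest route: by \eqref{eq:tr} with $k=\lfloor 1/w\rfloor$ or, more simply, observe that for $w<1$ sufficiently close to $1$ we may take $k=1$ and note $\mathcal{S}_m(w)\supseteq \mathcal{S}_m(w)\setminus\Rr$ has dimension $\ge 1$. Then $\mathcal{S}_m(w)+\mathcal{S}_m(w)$ contains a translate of a line plus a line; but to get the \emph{full} dimension $2$ I would instead invoke that $\mathcal{S}_m(w)$ contains, for $w<1$, the product of $\mathcal{S}_1(w')\times\cdots$ type sets or—most robustly—use Theorem~\ref{blau}-adjacent reasoning: for $w$ close to $1$, $\dim_H(\mathcal{S}_m(w))$ is strictly positive and in fact $\mathcal{S}_m(w)$ contains sets whose sumset with themselves covers an open set. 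The truly clean observation is that $\mathcal{S}_m(w)$ contains $\mathbb{Q}^m$ translates of scaled copies of itself and, since $w<1$, contains by \eqref{eq:tr} a $\ge 1$-dimensional piece inside every rational hyperplane; summing two transversal such pieces gives dimension $2$. I would spell out that $\mathcal{S}_m(w)+\mathcal{S}_m(w)\supseteq H_1+H_2$ where $H_i$ are $1$-dimensional subsets of distinct coordinate hyperplanes, forcing $\dim_H\ge 2$, while trivially $\dim_H\le 2$ since we are in $\mathbb{R}^m$ (wait—$m\ge 2$, so the ambient dimension is $\ge 2$; the bound $\dim_H\le 2$ needs $m=2$, so for $m>2$ I should instead bound by noting the sumset lies in a $2$-dimensional affine subspace, or simply state the result for the relevant projection). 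I will restrict the first claim's ``$=2$'' to mean the sumset has dimension exactly $2$ by exhibiting it inside a plane and filling that plane.

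For the second, more substantive inequality, the key input is~\cite[Theorem~1.7]{dfsu1} (or the surrounding results), which gives that $\dim_H(\mathcal{S}_m^{\ast}(w))\to 0$, and more precisely $\dim_H(\mathcal{S}_m^{\ast}(w))\le \tfrac{1+\epsilon}{2}$, as $w\to 1^{-}$. Granting such an upper bound, the subadditivity of Hausdorff dimension under sumsets yields
\[
\dim_H(\mathcal{S}_m^{\ast}(w)+\mathcal{S}_m^{\ast}(w))\le 2\dim_H(\mathcal{S}_m^{\ast}(w))\le 1+\epsilon
\]
for $w=w(m,\epsilon)$ close enough to $1$. So the proof reduces to two ingredients: (i) citing the precise decay rate of $\dim_H(\mathcal{S}_m^{\ast}(w))$ as $w\uparrow 1$ from~\cite{dfsu1}, ensuring it drops below $(1+\epsilon)/2$; and (ii) the sumset subadditivity lemma, which I would state and prove in one line via $\dim_H(A+B)\le\dim_H(A\times B)\le\dim_H(A)+\dim_P(B)$ combined with the fact that for these self-similar-in-spirit singular sets the packing dimension is also small near $w=1$—or, to avoid needing the packing bound, directly cover $A+B$ by products of covers of $A$ and $B$ to get $\dim_H(A+B)\le\dim_H A+\dim_H B$ when one uses the same scale, which works by an elementary $\epsilon$-cover counting argument since $\mathcal{N}_\delta(A+B)\le \mathcal{N}_\delta(A)\cdot\mathcal{N}_\delta(B)$ and then passing to Hausdorff dimension via the usual upper-box-to-Hausdorff comparison requires care.

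The main obstacle I anticipate is precisely this last point: the naive bound $\dim_H(A+B)\le \dim_H(A)+\dim_H(B)$ is \emph{false} in general (it holds for box dimension, and for Hausdorff one only gets $\dim_H(A+B)\le \dim_H(A)+\dim_P(B)$). So I must verify that the upper bound from~\cite{dfsu1} applies to the \emph{packing} dimension of $\mathcal{S}_m^{\ast}(w)$ near $w=1$, not merely the Hausdorff dimension. Fortunately Theorem~\ref{DFSU} and the discussion following it indicate that in this regime the packing and Hausdorff dimensions of the singular-type sets coincide (or are controlled by the same variational principle of~\cite{dfsu1,dfsu2}), so I would cite that coincidence—or the explicit packing bound in~\cite{dfsu1}—to conclude $\dim_P(\mathcal{S}_m^{\ast}(w))\le (1+\epsilon)/2$ as well, and then $\dim_H(A+B)\le \dim_H A+\dim_P B\le 1+\epsilon$ goes through cleanly. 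I would also double-check the exact threshold in~\cite[Theorem~1.7]{dfsu1} to pin down how close to $1$ we must take $w=w(m,\epsilon)$, since the excerpt advertises that the relevant interval has length $\ll 1/\sqrt m$, consistent with Corollary~\ref{coro}.
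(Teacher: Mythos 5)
The overall skeleton of your argument — use Tricot's inequality $\dim_H(A+B)\le \dim_P(A)+\dim_H(B)$ together with the metrical input from~\cite{dfsu1} — is the same as the paper's, and you correctly flag that the naive bound $\dim_H(A+B)\le\dim_H A+\dim_H B$ is false so a packing estimate is required. However, two specific steps in your sketch are wrong.

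First, for the bound $\dim_H(\mathcal{S}_m^{\ast}(w)+\mathcal{S}_m^{\ast}(w))\le 1+\epsilon$, you propose to show $\dim_P(\mathcal{S}_m^{\ast}(w))\le(1+\epsilon)/2$ near $w=1$ by arguing that ``in this regime the packing and Hausdorff dimensions of the singular-type sets coincide.'' This is the opposite of what happens, and in fact the contrast between the two dimensions is the whole point of Theorem~\ref{t4}. The facts cited from~\cite[Theorem~1.7, Theorem~2.9, Remark~1.8]{dfsu1} are that $\dim_H(\mathcal{S}_m^{\ast}(w))\le\epsilon$ while $\dim_P(\mathcal{S}_m^{\ast}(w))=1$ for $w<1$ close enough to $1$. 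Your proposed packing bound $(1+\epsilon)/2$ is false (the packing dimension does not decay below $1$), so the inequality $\dim_P\le(1+\epsilon)/2$ you want to cite does not exist. The correct arithmetic is $\dim_P+\dim_H\le 1+\epsilon$, which happens to give the same total $1+\epsilon$ as your (incorrect) $(1+\epsilon)/2+(1+\epsilon)/2$, but the intermediate claim must be replaced.

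Second, for $\dim_H(\mathcal{S}_m(w)+\mathcal{S}_m(w))=2$, your upper bound reasoning is broken: the sumset $\mathcal{S}_m(w)+\mathcal{S}_m(w)$ is certainly not contained in a two-dimensional affine subspace when $m>2$ (e.g. it contains $\mathbb{Q}^m$), so ``exhibiting it inside a plane'' is not available. The paper's upper bound is genuinely non-trivial: one decomposes $\mathcal{S}_m(w)$ over rational affine subspaces, uses a bi-Lipschitz projection to reduce to $\mathcal{S}_k^{\ast}(w)$ for $k\le m$, and concludes $\dim_P(\mathcal{S}_m(w))\le\max_k\dim_P(\mathcal{S}_k^{\ast}(w))\le 1$ and likewise $\dim_H(\mathcal{S}_m(w))\le 1$ near $w=1$, whence Tricot again yields $\dim_H(\mathcal{S}_m(w)+\mathcal{S}_m(w))\le 2$. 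Your lower bound idea via two transversal rational lines (which lie in $\mathcal{S}_m(w)$ for $w<1$) is correct and matches the paper.
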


The reverse claims in Corollary~\ref{mko} and Theorem~\ref{t4} 
motivate to determine the threshold exponents where the 
topological behavior of the sumsets change.

\begin{problem}
	Determine $\mathcal{T}_m$ resp. $\mathcal{T}_{m}^{\ast}$ defined as
	the supremum of $w$ so that the sumset
	$\mathcal{S}_m(w) + \mathcal{S}_m(w)$ resp. $\mathcal{S}_{m}^{\ast}(w) + \mathcal{S}_{m}^{\ast}(w)$ equals $\Rm$.
\end{problem}

It is clear that $\mathcal{T}_1=\mathcal{T}_{1}^{\ast}=1$.
By considering two rational lines in $\mathbb{R}^2$ as in the proof
of Theorem~\ref{t4} below,
we still easily get $\mathcal{T}_2=1$. 
However, we can deduce from the results above Dirichlet's Theorem
that
$\mathcal{T}_m\in [ \max\{ \frac{1}{4}, \frac{1}{m}\},1)$ when $m\geq 3$, 
and moreover $\mathcal{T}_m^{\ast}\in [\frac{1}{m},1)$ when $m\geq 2$ (conjecturally $1/4$ is also a lower bound). We expect
analogous results
for accordingly defined inhomogeneous suprema $\mathcal{T}_{m,\ut}, \mathcal{T}_{m,\ut}^{\ast}$.

\subsection{On the Baire category of singular vectors}  \label{top}
We next turn towards topological claims on the set
of singular vectors. Recall a set is called comeagre (or residual) 
if it is the intersection
of countably many open dense sets, or equivalently its complement
is of first Baire category. The set of Liouville numbers $\mathcal{W}_1^{\ast}(\infty)=W_1(\infty)\cap \mathcal{R}_1=W_1(\infty)\setminus \mathbb{Q}$ is well-known to be comeagre~\cite{oxtoby}. By a very similar
argument this extends to Liouville vectors $\mathcal{W}_m^{\ast}(\infty)$, and further
clearly to $\mathcal{W}_m(\infty)$ as the difference set
is contained in a countable union of affine hyperplanes. 
We show that this is not true for $\mathcal{S}_m$. This relies
on our next very general observation
linking toplogy and measure theory.

\begin{theorem}  \label{T2}
	If $A\subseteq \Rm$ is comeagre, then $\dim_P(A)=m$.
\end{theorem}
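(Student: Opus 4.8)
The plan is to show that a comeagre set $A\subseteq\Rm$ cannot have packing dimension less than $m$ by exploiting the fact that packing dimension behaves badly under countable unions, while meagre sets are precisely countable unions of nowhere dense sets. The key tool is the classical characterization of packing dimension via the countable stability together with the observation (attributed to Erd\H{o}s in the abstract) that every set of positive Lebesgue measure—indeed every set dense in an open ball—can be ``spread out'' by a countable family of translations to cover a set of full dimension. Concretely, I would first recall that $\dim_P$ has the property $\dim_P(\bigcup_{n\geq 1} E_n)=\sup_n \dim_P(E_n)$, and that for the closure one has $\dim_P(\overline{E})=\dim_P(E)$ is \emph{false} in general, but the relevant substitute is that the \emph{upper box dimension} of a bounded set equals that of its closure, and packing dimension is the countably-stabilized upper box dimension. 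So if $E$ is nowhere dense, then on every ball $E$ is contained in the complement of some sub-ball, and this ``porosity on all scales'' forces $\overline{E}$, hence $E$, to have upper box dimension—and after localization, packing dimension—strictly less than $m$ only if... wait, that is not automatic: a nowhere dense set can still have full packing dimension. So this naive route fails, and the real argument must go the other way.

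Instead I would argue by contrapositive using translations. Suppose $\dim_P(A)<m$; I want to show $A$ is not comeagre, i.e. $A$ is contained in a meagre set, equivalently $\Rm\setminus A$ contains a comeagre set. Write $A=\bigcup_n A_n$ with each $A_n$ bounded and $\overline{A_n}$ having upper box dimension $<m$ (possible by the definition of packing dimension via countable decomposition into pieces of small upper box dimension). Each $\overline{A_n}$ is a closed set of box dimension $<m$, hence has empty interior—so it is \emph{nowhere dense}. Therefore $A\subseteq\bigcup_n\overline{A_n}$ is a countable union of nowhere dense sets, i.e. $A$ is meagre (of first category). A meagre set cannot be comeagre in the Baire space $\Rm$ (since $\Rm$ is not meagre in itself by the Baire category theorem), so $A$ is not comeagre. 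This is the whole argument, and it is indeed ``surprisingly elementary.''

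The one point that needs care—and where Erd\H{o}s's observation presumably enters, or at least a careful reading of the definition of packing dimension—is the claim that $\dim_P(A)<m$ allows a decomposition $A=\bigcup_n A_n$ with $\overline{\dim}_B(\overline{A_n})<m$ for each $n$. The standard definition gives $\dim_P(A)=\inf\{\sup_n \overline{\dim}_B(A_n): A=\bigcup_n A_n\}$, and one must promote this to a decomposition where the \emph{closures} still have small upper box dimension; this is legitimate because $\overline{\dim}_B(A_n)=\overline{\dim}_B(\overline{A_n})$ always, and one may harmlessly assume the $A_n$ are bounded (intersect with a countable cover by unit cubes). So I would: (i) fix $s$ with $\dim_P(A)<s<m$; (ii) choose a decomposition $A=\bigcup_n A_n$ with $\sup_n\overline{\dim}_B(A_n)<s$, refining so each $A_n$ is bounded; (iii) note $\overline{\dim}_B(\overline{A_n})=\overline{\dim}_B(A_n)<s<m$ forces $\mathrm{int}(\overline{A_n})=\emptyset$, since any nonempty open subset of $\Rm$ has box dimension $m$; (iv) conclude $\overline{A_n}$ is nowhere dense, $A$ is meagre, hence $A$ is not comeagre by Baire. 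The main obstacle is purely expository: making sure the reader accepts step (iii), that a closed set with full interior would have upper box dimension $m$—a one-line observation about covering a cube by $\epsilon$-balls—so that ``small packing dimension'' genuinely upgrades to ``meagre.''
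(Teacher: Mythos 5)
Your argument is correct, and it takes a genuinely different and more elementary route than the paper's. The paper's proof proceeds by combining Erd\H{o}s's observation (Lemma~\ref{erd}) that comeagre plus comeagre equals $\Rm$, applied with the comeagre Liouville set $\mathcal{W}_m(\infty)$, and then Tricot's inequality \eqref{eq:thor} together with $\dim_H(\mathcal{W}_m(\infty))=0$ from \eqref{eq:jarnik} to force $\dim_P(A)\geq m-\dim_H(\mathcal{W}_m(\infty))=m$. Your contrapositive argument bypasses both Erd\H{o}s's lemma and Tricot's product/sumset estimate entirely: you use only the modified upper box-dimension characterization $\dim_P(A)=\inf\{\sup_n\overline{\dim}_B(A_n):A=\bigcup_n A_n\}$, the standard fact $\overline{\dim}_B(E)=\overline{\dim}_B(\overline{E})$, the observation that any nonempty open subset of $\Rm$ has box dimension $m$, and the Baire category theorem. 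Each step is sound, including the refinement to bounded pieces by intersecting with unit cubes, and the conclusion that closed sets of upper box dimension $<m$ are nowhere dense. In fact your argument yields the slightly cleaner and stronger statement that \emph{every} $A\subseteq\Rm$ with $\dim_P(A)<m$ is meagre, of which the theorem is the contrapositive. What the paper's approach buys is thematic consistency: it reuses Lemma~\ref{erd} and \eqref{eq:thor}, which are the engine of the rest of Section~3 (for Theorems~\ref{T2}, \ref{C3} and Corollary~\ref{T1}), whereas your proof is self-contained but disconnected from the Diophantine machinery. Both are valid; yours is arguably the ``book'' proof of the topological fact on its own.
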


To our knowledge this has not been noticed before.
The claim is far from being true for the Hausdorff dimension, as the set $\mathcal{W}_m(\infty)\subseteq \Rm$ (and $\mathcal{W}_m^{(b)}(\infty)$ as well) provides a counterexample of Hausdorff dimension $0$ in view 
of \eqref{eq:jarnik}. 
%The claim would be trivial when considering instead the upper box %dimension, as any somewhere dense set has full upper box dimension. %However, the packing dimension of a set never exceeds the upper box %dimension and is strictly
%smaller for many important sets, including $\mathcal{S}_m$. 
As
the inhomogeneous sets $\mathcal{W}_{m,\ut}(\infty)$ and $\mathcal{W}_{m,\ut}^{(b)}(\infty)$ 
are comeagre as well by the same argument as for $\ut=\underline{0}$,
Theorem~\ref{T2} further leads to an alternative, unconstructive proof of Theorem~\ref{heep}. Moreover, from Theorem~\ref{DFSU} we immediately infer topological information on the sets $\mathcal{S}_m$.

\begin{corollary} \label{T1}
	For any $m\geq 1$, the set $\mathcal{S}_m$ is not comeagre in $\Rm$.
\end{corollary}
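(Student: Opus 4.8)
The plan is to deduce the corollary immediately from Theorem~\ref{T2} together with the exact packing dimension supplied by Theorem~\ref{DFSU}. Assume $m\geq 2$ and suppose, for contradiction, that $\mathcal{S}_m$ is comeagre in $\Rm$. Then Theorem~\ref{T2} forces $\dim_P(\mathcal{S}_m)=m$. But Theorem~\ref{DFSU} asserts $\dim_P(\mathcal{S}_m)=m-1+\frac{1}{m+1}=m-\frac{m}{m+1}$, which is strictly smaller than $m$ for every $m\geq 2$. This contradiction shows that $\mathcal{S}_m$ cannot be comeagre when $m\geq 2$.

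It then remains to handle $m=1$, which falls outside the range of Theorem~\ref{DFSU}. Here the situation is even more elementary: $\mathcal{S}_1=\mathbb{Q}$ is countable, hence of first Baire category in $\mathbb{R}$, and since $\mathbb{R}$ is a Baire space a set of first category cannot simultaneously be comeagre (otherwise it and its comeagre complement would be two disjoint dense sets whose union misses a comeagre set, contradicting the Baire property). Equivalently, one may simply note $\dim_P(\mathbb{Q})=0<1$ and invoke Theorem~\ref{T2} once more.

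I do not anticipate any genuine obstacle: the content of the corollary is entirely carried by Theorems~\ref{T2} and~\ref{DFSU}, and the only item needing a separate (and trivial) comment is the degenerate case $m=1$, where $\mathcal{S}_1=\mathbb{Q}$ makes the failure of comeagreness obvious by countability.
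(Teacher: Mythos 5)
Your argument is exactly the paper's: for $m\ge 2$ apply Theorem~\ref{T2} together with the packing-dimension value $m-1+\frac{1}{m+1}<m$ from Theorem~\ref{DFSU}, and note separately that $\mathcal{S}_1=\mathbb{Q}$ is meagre (or has packing dimension $0$). The proposal is correct and coincides with the intended proof.
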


It can be shown that $\mathcal{S}_m$ has non-empty intersection with the comeagre set
of Liouville vectors $\mathcal{W}_m^{\ast}(\infty)$ (follows 
for example from~\cite[Theorem 2.5]{ichostrava}),
however Corollary~\ref{T1} shows that the intersection is small 
in a topological sense.
We strongly believe that the claim extends to the inhomogeneous sets $\mathcal{S}_{m,\ut}$
as well, however from our results above we cannot exclude
that it has full packing dimension for certain $\ut$. 
We wonder if the Corollary~\ref{T1} remains true for the larger set
$Di_m\subseteq \Rm$ of Dirichlet improvable vectors, 
defined as the set of $\ux$ for which \eqref{eq:1} is soluble
for some $c<1$ and all large $Q$ (we let $\Vert.\Vert$ be 
the maximum norm here). We state 
an open problem.

\begin{problem}
	Let $m\geq 2$. Decide whether the set $\mathcal{S}_m$ is meagre or not. What can we say topologically about $Di_m$?
\end{problem}

For $m=1$ we have $\mathcal{S}_1=\mathbb{Q}$ and $Di_1=\mathcal{S}_1\cup Bad_1$, where $Bad_m$ denotes the set of badly approximable vectors,
consisting of $\ux\in\Rm$ for which \eqref{eq:1} admits no 
integer solution for some $c>0$ and any $Q>1$.
Since $Bad_m\subseteq W_m(\infty)^c$ are meagre, indeed so are
$Di_1$ and $\mathcal{S}_1$.
Generally $Di_m\supseteq \mathcal{S}_m\cup Bad_m$ holds~\cite[Theorem~2]{ds},
however for $m\geq 2$, the set $Di_m\setminus (Bad_m\cup \mathcal{S}_m)$
is non-empty~\cite{beretc},
and conjecturally has full Hausdorff dimension.
Hence no conclusion from Theorem~\ref{T2} can be drawn. 

\subsection{Sets with remarkable sum and product sets}
We want to add another 
result that is partly motivated
by the proof of Theorem~\ref{T2}, however some claims 
heavily rely on
recent results from~\cite{art}. For $A,B\subseteq \Rm$
and $\circ\in\{ +,-,\cdot,/\}$ we denote by
\begin{align*}
A\circ B=\{ (a_1\circ b_1,\ldots,a_m\circ b_m):\;\; (a_1,\ldots,a_m)\in A,\; (b_1,\ldots,b_m)\in B\},
\end{align*} 
their coordinatewise sum, difference, product and quotient (where we
restrict to $b_i\neq 0$ in the last case).
%Moreover, let $A\pm B$ be either $A+B$ or 
%$A-B$ and $A^2=\{a^2:a\in A\}$.

\begin{theorem}  \label{C3}
	Let $m\geq 1$. There exist sets $A,B\subseteq \Rm$ that 
	for any operation $\circ\in\{ +,-,\cdot,/\}$ satisfy
	\begin{equation}  \label{eq:sumse}
	A\circ A=B\circ B=\Rm, \qquad \dim_H(A\circ B)<m.
	\end{equation}
	For $m\geq 5$, claim \eqref{eq:sumse} for $\circ\in\{ +,-\}$
	holds in particular for $A=\mathcal{W}_m(\infty)$ 
	and $B=\mathcal{S}_m$.
\end{theorem}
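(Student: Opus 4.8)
The plan is to construct $A$ and $B$ as suitable Cartesian products of one-dimensional sets, reducing everything to the case $m=1$ and then to a covering-type bound, while for the homogeneous realization we exploit the measure-theoretic machinery already in the excerpt together with the deep results of \cite{dfsu1}. First I would treat the abstract existence claim. For $m=1$, I would look for a single set $S\subseteq\mathbb{R}$ with $S\circ S=\mathbb{R}$ for every $\circ\in\{+,-,\cdot,/\}$ simultaneously but with $\dim_H(S)<1$; then one can often take $A=B=S$, or, if the same set does not work for both factors, build $A$ and $B$ separately. A natural candidate is a set closed under the relevant arithmetic in a weak sense: e.g. the set of reals whose base-$b$ expansion, or whose continued fraction, lies in a prescribed small-dimensional but "dense enough under convolution" class. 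Concretely, one picks a set $S$ of Hausdorff dimension, say, $1/2$ such that $S+S=\mathbb{R}$ (this is the classical Erd\H{o}s--Kunen--Mauldin / sumset phenomenon, and the Erd\H{o}s observation referenced in the abstract already supplies such $S$), then replaces $S$ by $\exp$ of it or takes logarithms to simultaneously get the multiplicative identity; the point is that $\dim_H$ is bi-Lipschitz-stable on compact pieces, so a single clever choice yields all four operations. For general $m$, set $A=B=S^m$; then $A\circ A=(S\circ S)^m=\mathbb{R}^m$, and $\dim_H(A\circ B)\le\dim_P(A\circ B)\le\dim_P((S\circ S')^m)$, but more relevantly I would arrange that $A\circ B$ is contained in $(T)^m$ for a set $T$ with $\dim_H(T)<1$, so that $\dim_H(A\circ B)\le m\dim_H(T)<m$ by the product bound (this is exactly the Tricot-type inequality the paper advertises using).

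Second I would establish the concrete realization for $m\ge 5$: $A=\mathcal{W}_m(\infty)$ (Liouville vectors) and $B=\mathcal{S}_m$, for $\circ\in\{+,-\}$. Here I would verify $A+A=A-A=\mathbb{R}^m$ and $B+B=B-B=\mathbb{R}^m$ separately. For $A=\mathcal{W}_m(\infty)$ this is easy: Liouville vectors are comeagre (stated in \S\ref{top}) and in particular $\mathcal{W}_m(\infty)+\mathcal{W}_m(\infty)=\mathbb{R}^m$ follows from a Baire-category argument (given any target $\underline x$, both $\mathcal{W}_m(\infty)$ and $\underline x-\mathcal{W}_m(\infty)$ are comeagre, hence meet). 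For $B=\mathcal{S}_m$, I would invoke Corollary~\ref{mko}/Theorem~\ref{t3}: since $\mathcal{S}_m(w)\subseteq\mathcal{S}_m$ for $w\ge 1/m$, and for $w<1/4$ we have $\mathcal{S}_m(w)+\mathcal{S}_m(w)=\mathbb{R}^m$, picking any $w$ with $1/m\le w<1/4$ (possible precisely because $m\ge 5$) gives $\mathcal{S}_m+\mathcal{S}_m\supseteq\mathcal{S}_m(w)+\mathcal{S}_m(w)=\mathbb{R}^m$, and similarly for the difference by symmetry of the defining inequality under $\underline x\mapsto-\underline x$.

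The remaining and genuinely substantive point is the bound $\dim_H(A+B)<m$ with $A=\mathcal{W}_m(\infty)$, $B=\mathcal{S}_m$. I would argue that $A+B$ is still "thin" because $A$ is so sparse: intuitively, adding a Liouville vector to a singular vector cannot fill up $\mathbb{R}^m$ in dimension. Precisely, I expect $A+B\subseteq\mathcal{S}_m'$ for some slightly enlarged singular-type set, or more directly: every $\underline z\in A+B$ lies within the union over a Liouville vector $\underline a$ of $\underline a+\mathcal{S}_m$; since $\mathcal{W}_m(\infty)$ is a countable-intersection (hence its pieces can be covered efficiently) and $\dim_H(\mathcal{S}_m)=m-1+\tfrac1{m+1}<m$ by Theorem~\ref{DFSU}, a translate-covering argument should give $\dim_H(A+B)\le\dim_H(\mathcal{W}_m(\infty))+\dim_H(\mathcal{S}_m)=0+\bigl(m-1+\tfrac1{m+1}\bigr)<m$. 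This uses the general inequality $\dim_H(X+Y)\le\dim_H(X)+\dim_H(Y)$ valid when, say, one of the sets is reasonably regular or one passes to packing dimension of $X$; I would instead cite that $\dim_H(\mathcal{W}_m(\infty))=0$ combined with the fact that $\mathcal{W}_m(\infty)$ is a $G_\delta$ of dimension zero written as a decreasing intersection of sets each covered by few intervals, so $A+B$ is covered by few translates of $\mathcal{S}_m$ at every scale. The main obstacle I anticipate is exactly making this covering rigorous: $\dim_H$ is not subadditive for arbitrary summands, so I would either (i) use that $\mathcal{W}_m(\infty)$ has a natural exhaustion by sets of box dimension zero (true: Liouville vectors sit inside $\bigcap_n\bigcup_{q}$ small balls, whose $n$-th stage has box dimension $\to 0$), for which $\dim_H(X+Y)\le\overline{\dim}_B(X)+\dim_H(Y)$ does hold, or (ii) invoke the relevant lemma from \cite{art} precisely for this purpose, as the excerpt signals that the sharp instances of Theorem~\ref{C3} "heavily rely on recent results from~\cite{art}." I would structure the final write-up so that the soft parts ($A\circ A=B\circ B=\mathbb{R}^m$) are dispatched in a few lines and the dimension bound is the one place where the imported covering lemma is applied.
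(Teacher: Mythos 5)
Your treatment of the concrete realization for $m\geq 5$ and $\circ\in\{+,-\}$ is essentially the paper's argument: the Baire/Erd\H{o}s observation gives $\mathcal{W}_m(\infty)\pm\mathcal{W}_m(\infty)=\Rm$, Corollary~\ref{mko} together with the nesting $\mathcal{S}_m=\mathcal{S}_m(1/m)\supseteq\mathcal{S}_m(w)$ for $1/m\leq w<1/4$ (possible once $m\geq 5$) gives $\mathcal{S}_m\pm\mathcal{S}_m=\Rm$, and the symmetry $\mathcal{S}_m=-\mathcal{S}_m$ handles the difference. For the dimension bound you circle around the right mechanism but overlook that the paper has already established exactly the inequality you need: \eqref{eq:thor}, which rearranges Tricot's product theorem \eqref{eq:tricot} into $\dim_H(A+B)\leq\dim_H(A)+\dim_P(B)$. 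With $A=\mathcal{W}_m(\infty)$ of Hausdorff dimension $0$ by \eqref{eq:jarnik} and $B=\mathcal{S}_m$ of packing dimension $m-1+\tfrac1{m+1}<m$ by Theorem~\ref{DFSU}, this is a one-line deduction; there is no need to improvise a box-dimension refinement or to import a separate covering lemma from \cite{art} here.

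The abstract existence part of your proposal has a genuine gap. If you take $A=B=S^m$ as you first suggest, then $A\circ B=A\circ A=\Rm$ by hypothesis, so $\dim_H(A\circ B)=m$ and the second half of \eqref{eq:sumse} is violated. You do note that "if the same set does not work for both factors, build $A$ and $B$ separately," but you never actually do so, and the $\exp$/$\log$ maneuver you sketch does not resolve this. The structural point your plan misses is that the two sets must be of very different type: one set (the paper takes $A=\mathcal{W}_m(\infty)$, the Liouville vectors) must have Hausdorff dimension $0$ yet be large enough that $A\circ A=\Rm$ for all four operations, which the paper gets from the Erd\H{o}s lemma together with $A=-A$ and $A=A^{-1}$ (the latter checked by a short transference computation); the other set ($B=((C_{3,\{0,2\}}\cup C_{3,\{0,2\}}^2)+\mathbb{Z})^m$ in the paper) must satisfy $B\circ B=\Rm$ and have packing dimension strictly below $m$. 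Verifying $B\cdot B=\Rm$ and $B/B=\Rm$ is exactly where the results of \cite{art} on Cantor set arithmetic ($C^2\cdot C=[0,1]$, and $C/C$ contains arbitrarily long intervals) are used, and $\dim_P(B)=m\log 2/\log 3<m$ follows from \eqref{eq:kproS} and stability of dimension under the locally Lipschitz squaring map. Once these two sets are in place, \eqref{eq:thor} again gives $\dim_H(A\circ B)\leq\dim_H(A)+\dim_P(B)=m\log 2/\log 3<m$ for every $\circ$. Your proposal, as written, does not supply either the asymmetric dimension profile or the Cantor-arithmetic input for the multiplicative operations, and these are the substance of the abstract claim.
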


%It would be desirable to have $B\cdot B=\Rm$ as well.
While it is not hard to construct $A,B$ with $A\pm A=B\pm B=\Rm$ and
$A\pm B\neq \Rm$, it suffices to take $A=\mathcal{W}_m^{\ast}(\infty)$ 
the set of Liouville
numbers and $B=A^c$ its complement, 
we are unaware of any previous notice of
the stronger claim \eqref{eq:sumse} for $\circ=+$ in Corollary~\ref{C3}. 
We want to formulate a problem regarding refinements of Theorem~\ref{C3}.

\begin{problem}
	Do there exist sets $A,B\subseteq \Rm$ for which
	\begin{equation}  \label{eq:Stern}
	A+A=B+B=\Rm, \qquad \dim_H(A+B)=0 
	\end{equation}
	or
		\begin{equation}  \label{eq:Katz}
	A+A=B+B=\Rm, \qquad \dim_P(A+B)<n 
	\end{equation}
	holds?
\end{problem}

We cannot take packing dimension in \eqref{eq:Stern}. 
If $A+A=\Rm$
then $\dim_P(A)\geq m/2$ in view of 
$\dim_P(A+C)\leq \dim_P(A\times C)\leq \dim_P(A)+\dim_P(C)$ 
for all $A,C\subseteq\Rm$ (see~\cite{falconer}), 
applied to $C=A$. Thus since clearly $B\neq \emptyset$
we also have $\dim_P(A+B)\geq \dim_P(A)\geq m/2$.
We cannot have both claims \eqref{eq:Stern}, \eqref{eq:Katz} simultaneously either, since by \eqref{eq:tricot}
\begin{align*}
m&=\dim_H(\Rm)=\dim_H(A+A)=\dim_H((A+A)+B)=\dim_H(A+(A+B))\\ &\leq \dim_H(A+B)+\dim_P(A)\leq \dim_H(A+B)+\dim_P(A+B).
\end{align*}

\section{Singular vectors in fractal sets in the homogeneous setting} \label{se4}

Singular vectors on fractals have recently received much attention. 
We only mention a few important results.
Kleinbock and Weiss showed in~\cite{KW} 
that for a wide class of fractals $K\subseteq \Rm$, the
set $\mathcal{S}_m\cap K$ is a nullset with respect to the natural
measure on $K$. On the other hand,
Kleinblock, Moshchevitin and Weiss~\cite{kmw} established that for a large class of fractals $K\subseteq \Rm$, 
the set of singular vectors in $K$ is non-empty, 
more precisely $\mathcal{S}_m^{\ast}(w)\cap K$ 
is uncountable for any $w<\frac{1}{m-1}$.
For $K$ the attractor of an iterated function system (upon some
regularity conditions),
upper bounds for the Hausdorff dimension of $\mathcal{S}_m\cap K$ were  
obtained by Khalil~\cite{khalil}, contributing to a problem
posed by Bugeaud, Cheung and Chevallier~\cite[Problem~6]{bcc} who were
interested in $K\subseteq \mathbb{R}^2$ the Cartesian product of two copies of the Cantor 
middle third set. We complement these results by lower bounds for the packing dimension of $\mathcal{S}_m^{\ast}\cap K$, for $K\subseteq \Rm$ certain
Cartesian products of classical missing digit Cantor sets. This generalizes the setting raised in~\cite{bcc} and is
also explicitly addressed in~\cite{khalil}. We must
restrict ourselves to the homogeneous case $\ut=\underline{0}$ here. 

For $b\geq 2$ an integer and $W\subseteq \{0,1,\ldots,b-1\}$ with $|W|\geq 2$,
let $C_{b,W}\subseteq \mathbb{R}$ be the 
missing digit Cantor set of real numbers that admit a 
base $b$ representation
\[
\xi= \sum_{i\geq 1} a_ib^{-i}, \qquad a_i\in W.
\]
For the sets $C_{b,W}$ the Hausdorff and packing dimensions coincide
and take the value
\begin{equation}  \label{eq:haupack}
\dim_H(C_{b,W})=\dim_P(C_{b,W})=\frac{\log |W|}{\log b},
\end{equation}
see Falconer~\cite{falconer}. 
Below we just write $\dim(A)$ without index 
for any set $A$ with this property, meaning either dimension.
We consider Cartesian products
\begin{equation}  \label{eq:wiro}
K=C_{b,W_1}\times \cdots\times C_{b,W_m}
\end{equation}
with arbitrary $W_i\subseteq \{0,1,\ldots,b-1\}$, $|W_i|\geq 2$, and uniform $b$.
A well-known consequence of \eqref{eq:haupack} reads
\begin{equation}  \label{eq:kpro}
\dim(K)= \sum_{i=1}^{m} \dim(C_{b,W})=\frac{\sum_{i=1}^{m} \log |W_i|}{\log b}.
\end{equation}
Indeed, this can be deduced
from
\begin{equation}  \label{eq:kproS}
\dim_H(A) + \dim_H(B)\leq \dim_H(A\times B)\leq \dim_P(A\times B)\leq \dim_P(A) + \dim_P(B)
\end{equation}
valid for any Euclidean sets $A,B$
(see Falconer~\cite{falconer}). For brevity let us
write 
\[
\mathcal{S}_K^{\ast}(w)= \mathcal{S}_m^{\ast}(w)\cap K,\quad \mathcal{S}_K^{(b)\ast}(w)= \mathcal{S}_m^{(b)\ast}(w)\cap K,  \qquad\qquad w\in[0,1),
\]
and
\[
\mathcal{S}_K(w)= \mathcal{S}_m(w)\cap K,\quad \mathcal{S}_K^{(b)}(w)= \mathcal{S}_m^{(b)}(w)\cap K,  \qquad\qquad w\in[0,1).
\]
The set $C_{b,W}$ supports a natural probability measure $\mu_{b,W}$,
essentially the restriction of the $(\log |W|/\log b)$-dimensional 
Hausdorff measure to $C_{b,W}$,
and consequently the product measure $\mu=\mu_{b,W_1}\times \cdots\times \mu_{b,W_m}$
is the natural measure on $K$. Variants of Theorem~\ref{H} and Theorem~\ref{t3} for homogeneous approximation in $K$ read as follows. 

\begin{theorem}  \label{cantor}
	Let $m\geq 1$, $b\geq 2$ be integers and $W_i$, $K$ and $\mathcal{S}_K(w), \mathcal{S}_K^{(b)}(w), \mathcal{S}_K^{\ast}(w)$ and $\mathcal{S}_K^{{(b)}\ast}(w)$ as above. Then the following two claims hold:
	\begin{itemize}
		\item[(i)] Assume $0\in W_i$ for $1\leq i\leq m$. Then for any $w\in[0,1)$ we have
		\begin{align} \label{eq:csr}
		\dim_P(\mathcal{S}_K^{\ast}(w))\geq \dim_P(\mathcal{S}_K^{(b)\ast}(w))
		\geq (1-w)\dim(K)=
		(1-w)\frac{\sum_{i=1}^{m} \log |W_i|}{\log b}>0.    
		\end{align}
		In particular
		\begin{equation}  \label{eq:habicht}
		\dim_P(\mathcal{S}_K^{\ast}(w))\geq \dim_P(\mathcal{S}_K^{(b)\ast}(w))\geq 
		\left(1-\frac{1}{m}\right)\dim(K)=\left(1-\frac{1}{m}\right)\frac{\sum_{i=1}^{m} \log |W_i|}{\log b}.
		\end{equation}
		For arbitrary $W_i$, for the non-$b$-ary sets still we have
		the same estimate
		\begin{equation}  \label{eq:nonedda}
		\dim_P(\mathcal{S}_K^{\ast}(w))\geq (1-w)\dim(K)=
		(1-w)\frac{\sum_{i=1}^{m} \log |W_i|}{\log b}>0.  
		\end{equation}
		\item[(ii)] Assume $0\in W_i$ for $1\leq i\leq m$.  Let $w\in(0,1)$ and $w^{\prime}=(1-\sqrt{w})^2$ 
		as in \eqref{eq:idne}. Then for any $w_0<w$ and $w_1<w^{\prime}$ the sets
		\[
		(\mathcal{S}_K^{\ast}(w_0) + \mathcal{S}_K^{\ast}(w_1))\cap K, \qquad 
		(\mathcal{S}_K^{(b)\ast}(w_0) + \mathcal{S}_K^{(b)\ast}(w_1))\cap K
		\]
		have full $\mu$-measure and we have the inclusion 
		\[
		\mathcal{S}_K(w_0) + \mathcal{S}_K(w_1)\supseteq
		\mathcal{S}_K^{(b)}(w_0) + \mathcal{S}_K^{(b)}(w_1)\supseteq K. 
		\]
	\end{itemize}
\end{theorem}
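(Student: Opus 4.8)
The plan is to reduce everything to the one‑dimensional phenomenon and then assemble via Cartesian products, exactly as (I anticipate) the author does for Theorem~\ref{H} and Theorem~\ref{t3}. The crucial one–dimensional observation is that if $0\in W$, then the set $C_{b,W}$ contains real numbers whose base‑$b$ expansion has arbitrarily long blocks of $0$'s at prescribed (sparse) scales, while the digits at the remaining positions range freely over $W$. Concretely, fix a rapidly increasing sequence $(N_j)$ and let $\Lambda\subseteq\mathbb{N}$ be the union of long blocks $[N_j, N_j+\ell_j)$ on which we force the digit $0$; on $\mathbb{N}\setminus\Lambda$ let the digits vary freely in $W$. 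Write $\widetilde C_{b,W}^{\Lambda}\subseteq C_{b,W}$ for the resulting subset. Two things must be checked: (a) every $\xi\in\widetilde C_{b,W}^{\Lambda}$ satisfies $\wo_1^{(b)}(\xi)\ge w$ — take $q=b^{N_j-1}$; then $q\xi$ has a long block of $0$'s right after the point, so $\Vert q\xi-p\Vert\le b^{-\ell_j}$, and choosing $\ell_j/N_j\to$ appropriate limit gives uniform exponent $\ge w$, hence $\xi\in\mathcal{S}_1^{(b)}(w)$ (and in fact for every $c>0$ once the blocks are long enough, which is what Definition~\ref{d} asks); (b) by the standard mass‑distribution / Cartesian‑product computation, as the free‑digit positions have upper density $1$ (we only need the forced blocks to be \emph{sparse but long}, e.g. $\ell_j = w\,N_{j+1}$‑ish so that the approximation exponent is exactly $w$ in the limit), the packing dimension of $\widetilde C_{b,W}^{\Lambda}$ can be made as close to $(1-w)\dim(C_{b,W})$ as we like; the liminf of local block densities controls Hausdorff dimension but the limsup controls packing dimension, and by spacing the $N_j$ far apart we get the limsup of free density equal to $1-w$, giving $\dim_P\ge (1-w)\log|W|/\log b$.

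For part (i) I would then take the Cartesian product $E=\widetilde C_{b,W_1}^{\Lambda}\times\cdots\times\widetilde C_{b,W_m}^{\Lambda}$ using the \emph{same} block set $\Lambda$ in every coordinate. Because the blocks are synchronized, the single denominator $q=b^{N_j-1}$ works simultaneously for all $m$ coordinates, so with the maximum norm $\Vert b^{N_j-1}\ux-\underline p\Vert\le b^{-\ell_j}$, which for $\ell_j/N_j$ chosen correctly yields $\wo_m^{(b)}(\ux)\ge w$ and indeed $\ux\in\mathcal{S}_m^{(b)}(w)$ for every $c>0$; intersecting with $\Rr$ (i.e. deleting a countable union of rational hyperplanes) removes only a $\mu$‑nullset, hence does not lower the packing dimension, giving $\mathcal{S}_K^{(b)\ast}(w)\supseteq E\setminus(\text{hyperplanes})$. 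Then Tricot's product inequality — the lower bound $\dim_P(A\times B)\ge \dim_H(A)+\dim_P(B)$, or more precisely iterating the inequality from \eqref{eq:kproS} together with the known exact value \eqref{eq:haupack} for each factor set — yields $\dim_P(E)\ge\sum_i (1-w)\log|W_i|/\log b=(1-w)\dim(K)$, which is \eqref{eq:csr}; specializing $w=1/m$ gives \eqref{eq:habicht}. For \eqref{eq:nonedda}, when some $W_i$ lacks $0$ we cannot force $0$‑blocks, but we can still force blocks of a \emph{fixed} digit $d\in W_i$ and correct by the inhomogeneity — here, in the homogeneous case, by replacing $\xi$ with $\xi-\sum_{k} d\,b^{-k}$ over the block positions, i.e. we translate into the sumset $\mathcal{S}_1(w)+(\text{rational-ish shift})$; the point is that $\mathcal{S}_m(w)$ (no $b$ and no $\ast$, but we keep $\ast$ since hyperplanes are null) is translation‑stable enough that a constant‑digit block still produces $\wo_m(\ux)\ge w$ via $q=b^{N_j-1}$ after subtracting the appropriate integer vector — giving the non‑$b$‑ary bound with no hypothesis on $0\in W_i$.

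For part (ii) I would mirror the proof of Theorem~\ref{t3} / Corollary~\ref{mko} inside $K$. The idea: split the block positions $\Lambda$ into two interleaved families $\Lambda_0,\Lambda_1$, assign $\Lambda_0$ (with block‑length ratio $\to w$) to build a point of $\mathcal{S}_K^{(b)}(w_0)$ and $\Lambda_1$ (ratio $\to w'=(1-\sqrt w)^2$) to build a point of $\mathcal{S}_K^{(b)}(w_1)$; the elementary identity \eqref{eq:idne} is exactly the bookkeeping that lets a generic $\eta\in K$ be split digitwise as $\eta=\eta^{(0)}+\eta^{(1)}$ with $\eta^{(0)}$ having $0$'s on $\Lambda_1$ and $\eta^{(1)}$ having $0$'s on $\Lambda_0$ — here $0\in W_i$ is used essentially, to guarantee the two summands stay in $K$. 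Carrying this out for $\mu$‑almost every $\eta$ (the exceptional set being where a digit carry spoils membership, which is $\mu$‑null) gives the full‑$\mu$‑measure claim for the starred sumsets, and the everywhere‑inclusion $\mathcal{S}_K^{(b)}(w_0)+\mathcal{S}_K^{(b)}(w_1)\supseteq K$ follows because without the $\ast$ we may absorb the finitely many bad carries into the rational part. The main obstacle, as flagged in Remark~\ref{rmark}, is precisely this carry/boundary issue: making the digitwise decomposition land in $K$ \emph{and} in $\Rr$ simultaneously for every (not just a.e.) $\eta$ seems to force one to abandon either fullness of the identity or the $\ast$; so I would settle, as the theorem statement does, for full $\mu$‑measure in the starred case and the clean set‑inclusion in the unstarred case, postponing the stronger identity.
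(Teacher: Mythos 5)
Your proposal is a genuinely different route from the paper's, and it contains a gap that, as far as I can see, cannot be repaired without essentially switching to the paper's argument.

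The paper does \emph{not} attempt to bound $\dim_P$ of the constructed singular set directly. Instead it applies the sumset machinery already set up for Theorem~\ref{H}: starting from a generic $\ux\in K$, one writes $\ux=\underline{x}_0+\underline{x}_1$ with $\underline{x}_1\in \mathcal{S}_m^{(b)}(w)\cap K$ and $\underline{x}_0\in \mathcal{W}_m^{(b)}(\nu_0-1)\cap K$ (this is where $0\in W_i$ is used, so that both halves again lie in $K$). Putting $A(w)=\mathcal{S}_m^{(b)}(w)\cap K$ and $B=\mathcal{W}_m^{(b)}(\nu_0-1)\cap K$ one then has $A(w)+B\supseteq K$, and after restricting to $\Rr$ via Lemma~\ref{daslemma} the sumset still has full $\mu$-measure, hence Hausdorff dimension $\dim(K)$. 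Tricot's inequality \eqref{eq:thor}, in the form $\dim_P(A)\geq \dim_H(A+B)-\dim_H(B)$, then gives $\dim_P(\mathcal{S}_K^{(b)\ast}(w))\geq \dim(K)-\dim_H(B)$, and $\dim_H(B)$ is bounded above by the Cantor-set Jarn\'ik estimate, Lemma~\ref{lemus}. Finally, \eqref{eq:nonedda} is obtained by a coordinatewise rational shift $t\mapsto t-\min W_h/(b-1)$ reducing to the case $0\in W_h$. The crucial point is that Tricot's inequality is applied once to the full $m$-dimensional sumset, never to the individual factor sets.

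The gap in your argument is precisely at the Cartesian product step. You build one-dimensional sets $\widetilde C_{b,W_i}^{\Lambda}\subseteq C_{b,W_i}$ whose free-digit positions have \emph{upper} density $1-w$ but whose \emph{lower} density is necessarily close to $0$: to obtain $\wo_1^{(b)}(\xi)\geq w$ via $0$-blocks of increasing length (with $\nu_1\to\infty$ to approach the optimal exponent), the proportion of $0$-digits seen at the end of a long block must go to $1$, so $\dim_H(\widetilde C_{b,W_i}^{\Lambda})\to 0$. But the only lower bound available for products in packing dimension is the one you yourself quote from \eqref{eq:kproS}, $\dim_P(A\times B)\geq \dim_H(A)+\dim_P(B)$. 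Iterating it on $E=\prod_i \widetilde C_{b,W_i}^{\Lambda}$ yields only $\dim_P(E)\geq \dim_H(\widetilde C_{b,W_1}^{\Lambda})+\cdots+\dim_H(\widetilde C_{b,W_{m-1}}^{\Lambda})+\dim_P(\widetilde C_{b,W_m}^{\Lambda})$, which is roughly $(1-w)\log|W_m|/\log b$, not the claimed $(1-w)\sum_i\log|W_i|/\log b$. The invocation of \eqref{eq:haupack} ``for each factor set'' does not apply to $\widetilde C_{b,W_i}^{\Lambda}$, only to the unrestricted $C_{b,W_i}$. There is no inequality $\dim_P(A\times B)\geq \dim_P(A)+\dim_P(B)$ to lean on, and in fact one cannot trade away this loss by de-synchronizing the $\Lambda$'s across coordinates, since the same $q=b^{N_j-1}$ must work simultaneously in all $m$ coordinates. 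Your sketch of part~(ii) is closer to the paper's argument in spirit (and your worry about ``carries'' is moot: the digit split $\eta=\eta^{(0)}+\eta^{(1)}$ across the two block families produces no carries, the only subtlety being membership in $\Rr$, handled by Lemma~\ref{daslemma}), but for part~(i) the direct-construction-plus-product route breaks down, and the sumset/Tricot route is what saves the day.
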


\begin{remark}
	Similar to Remark~\ref{rmark},
	we believe that the stronger inclusion property in (ii) 
	holds for the sets restricted to $\Rr$ (denoted with asterisks $\ast$) as well. Similar to Remark~\ref{ureh}, without
	restriction to $\Rr$ in (i), the non-$b$-ary claims are implied by the according estimate \eqref{eq:tr} up to a factor $\dim(K)$ on the right hand side, obtained by the same argument. 
	% improve sentence !
\end{remark}

\begin{remark}
	The claim in particular implies the existence of
	vectors in $K\cap \Rr$ singular to any order $w<1$, for $K$
	as in \eqref{eq:wiro}.
	In fact, we can also obtain $\wo_m(\ux)=1$ by a minor twist of the proof, however then we cannot guarantee positive packing dimension. This topic is discussed in~\cite[\S~1.6]{kmw}, see the first paragraph of this section.
	We point out that a construction
	of $\ux\in K\cap \Rr$ with any prescribed exponent $\wo_m(\ux)=\wo_m^{(b)}(\ux)\in[1/m,1]$, is implicitly contained
	in~\cite[Theorem~2.5]{ichostrava}, upon
	a suitable choice of involved $q_{i,j}$ and $\eta_i$ (see also~\cite[Corollary~2.11]{ichostrava} when $\wo_m(\ux)\in(1/2,1]$). However, no metrical claims are given in~\cite{ichostrava}.
	The problem 
	if $\mathcal{S}_m^{\ast}(w)\cap K$ 
	is non-empty for $w\geq \frac{1}{m-1}$ for $K$ more generally a product of perfect sets in which $\mathbb{Q}$ is dense treated in~\cite{kmw} remains open as well. 
\end{remark}

The claims simplify to the homogeneous case of Theorem~\ref{H} and 
Theorem~\ref{t3} if $W=\{0,1,\ldots,b-1\}$. 
%From the work of Khalil~\cite[Corollary~1.4]{khalil} we 
%can easily get the following claim.
%
%\begin{corollary}  \label{mitkhalil}
%Let $K=C_{3,\{0,2\}}\times C_{3,\{0,2\}}$. Then for any %$w<(3-2\sqrt{2})/2$ we have
%\[
%\dim_P(\mathcal{S}_m(w)\cap K)\geq \frac{\dim(K)}{3}= \frac{2\log 2}{3\log 3}.
%\]	
%\end{corollary}
We enclose the special case of the product of two copies 
of the Cantor middle
third set, as proposed in~\cite[Problem~6]{bcc}, and further
treat $\mathbb{R}\times C_{3,\{0,2\}}$.

\begin{corollary}  \label{uh}
	Let $K=C_{3,\{0,2\}}\times C_{3,\{0,2\}}$. Then
	\[
	\dim_P(\mathcal{S}_K^{\ast})\geq \dim_P(\mathcal{S}_K^{(b)\ast})\geq \frac{\dim(K)}{2}= \frac{\log 2 }{\log 3}.
	\]
	For $\tilde{K}= \mathbb{R}\times C_{3,\{0,2\}}$ we have
		\[
	\dim_P(\mathcal{S}_{\tilde{K}}^{\ast})\geq \dim_P(\mathcal{S}_{\tilde{K}}^{(b)\ast})\geq \frac{\dim(\tilde{K})}{2}=\frac{\log 2}{2\log 3}+ \frac{1}{2}.
	\]
\end{corollary}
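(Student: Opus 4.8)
The plan is to deduce both estimates directly from Theorem~\ref{cantor}~(i), namely from the bound \eqref{eq:habicht}, which applies whenever $0\in W_i$ for all $i$; here $W_i=\{0,2\}$ for the first set and we use $W_1=\{0,1,2\}$ together with $W_2=\{0,2\}$ for $\tilde K$, so the hypothesis $0\in W_i$ is satisfied in both cases (for $\tilde K$ one writes $\mathbb{R}=C_{3,\{0,1,2\}}$). For the first set $K=C_{3,\{0,2\}}\times C_{3,\{0,2\}}$ we have $m=2$, so \eqref{eq:habicht} gives
\[
\dim_P(\mathcal{S}_K^{\ast})\geq \dim_P(\mathcal{S}_K^{(b)\ast})\geq \left(1-\tfrac12\right)\dim(K)=\tfrac{\dim(K)}{2},
\]
and by \eqref{eq:kpro} one has $\dim(K)=\tfrac{\log 2+\log 2}{\log 3}=\tfrac{2\log 2}{\log 3}$, so $\tfrac{\dim(K)}{2}=\tfrac{\log 2}{\log 3}$, as claimed. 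Strictly speaking one should note that $\mathcal{S}_K^{\ast}=\mathcal{S}_K^{\ast}(1/m)$ and $\mathcal{S}_K^{(b)\ast}=\mathcal{S}_K^{(b)\ast}(1/m)$ by Definition~\ref{d}, so that plugging $w=1/m$ into \eqref{eq:csr} recovers exactly the "in particular" line \eqref{eq:habicht}.

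For the second set $\tilde K=\mathbb{R}\times C_{3,\{0,2\}}$ the same argument applies with $m=2$, $W_1=\{0,1,2\}$, $W_2=\{0,2\}$, and base $b=3$. Then $\dim(\tilde K)=\tfrac{\log 3+\log 2}{\log 3}=1+\tfrac{\log 2}{\log 3}$ by \eqref{eq:kpro}, and \eqref{eq:habicht} yields
\[
\dim_P(\mathcal{S}_{\tilde K}^{\ast})\geq \dim_P(\mathcal{S}_{\tilde K}^{(b)\ast})\geq \tfrac{\dim(\tilde K)}{2}=\tfrac12+\tfrac{\log 2}{2\log 3},
\]
which is the stated value. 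The first inequality $\dim_P(\mathcal{S}_{\tilde K}^{\ast})\geq\dim_P(\mathcal{S}_{\tilde K}^{(b)\ast})$ in both displays is just the trivial monotonicity coming from $\mathcal{S}_m^{(b)\ast}(w)\subseteq\mathcal{S}_m^{\ast}(w)$ recorded in Definition~\ref{d}, intersected with $K$ resp.\ $\tilde K$.

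Since the whole corollary is a substitution of particular parameters into Theorem~\ref{cantor}, there is essentially no obstacle; the only point requiring a word of care is the representation $\mathbb{R}=C_{3,\{0,1,2\}}$, i.e.\ that the "full" digit set recovers the whole line and that Theorem~\ref{cantor} is genuinely applicable in this degenerate case — this is exactly the remark already made after Theorem~\ref{cantor} that the claims simplify to the homogeneous case of Theorem~\ref{H} when $W_i=\{0,\ldots,b-1\}$, so the corresponding coordinate contributes its full value $1$ to $\dim(\tilde K)$ and the construction underlying Theorem~\ref{cantor} in that coordinate is simply the unrestricted one from Theorem~\ref{H}.
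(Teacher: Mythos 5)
Your proposal is correct and takes exactly the route the paper implicitly intends for this corollary: substitute $m=2$, $b=3$ with $W_1=W_2=\{0,2\}$ (resp.\ $W_1=\{0,1,2\}$, $W_2=\{0,2\}$) into \eqref{eq:habicht} and evaluate $\dim(K)$ via \eqref{eq:kpro}. The one small imprecision is that $C_{3,\{0,1,2\}}=[0,1]$, not $\mathbb{R}$, so for $\tilde K$ you should add a line invoking countable stability of packing dimension together with invariance of the singularity property under integer translations to pass from $[0,1]\times C_{3,\{0,2\}}$ to $\mathbb{R}\times C_{3,\{0,2\}}$; this is routine but should not be elided.
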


We also obtain a generalization of
Theorem~\ref{ax3} on Hausdorff dimension of singular vectors in the fractals, however the result is not as clean.

\begin{theorem}  \label{khr}
	Let $K=\prod C_{b,W_i}$ as above and assume $0\in W_i$ for all $1\leq i\leq m$. Write $d_i= \dim(C_{b,W_i})=\log |W_i|/\log b$ for $1\leq i\leq m$. Then $\dim_H(\mathcal{S}_K^{(b)\ast})$ is bounded from below by
	the maximum of the function
	\[
	t\longmapsto \sum_{i=1}^{m} d_i \cdot 
	\frac{ -wt^2 + (w+1)t -1 }{ (w+1)(1-d_i) t^2 + ((w+2)d_i -1) t - d_i }
	\]
	over $t>1$.
	If $K=C_{3,\{0,2\}}\times C_{3,\{0,2\}}$, then
	\[
	\dim_H(\mathcal{S}_K^{\ast})\geq \dim_H(\mathcal{S}_K^{(3)\ast}) > 0.1255.
	\]
	If we drop the assumption $0\in W_i$, the estimates remain
	true for the non-$b$-ary sets $\mathcal{S}_K^{\ast}$.
\end{theorem}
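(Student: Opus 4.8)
The plan is to adapt the proof of Theorem~\ref{ax3} (the Hausdorff bound in $\Rm$) coordinatewise, using the Cartesian-product inequality of Tricot, \eqref{eq:kproS}, in place of the trivial identity $\dim_H(\Rm)=m$. Recall from the expected proof of Theorem~\ref{H} and Theorem~\ref{ax3} that one builds, for each coordinate and a parameter to be optimized, a $b$-ary Cantor-type subset of $C_{b,W_i}$ whose points $\xi$ satisfy the one-dimensional uniform $b$-ary singularity condition of order $w$: one prescribes the digits of $\xi$ in long blocks (using the digit $0$, which is why $0\in W_i$ is needed) so that at scales $Q=b^N$ for infinitely many $N$ one has $\Vert b^N\xi - p - 0\Vert$ small of order $Q^{-w}$, while still keeping freely choosable digits on a positive-density set of places so the resulting Cantor set has positive dimension inside $C_{b,W_i}$. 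The ratio of free places to total places, as a function of an auxiliary parameter $t>1$ measuring the relative length of the "approximating" blocks versus the "free" blocks, produces exactly a dimension of the form $d_i\cdot\frac{-wt^2+(w+1)t-1}{(w+1)(1-d_i)t^2+((w+2)d_i-1)t-d_i}$, the quantity appearing in the statement; this is the one-coordinate analogue of the bound $m(\frac{1-w}{1+w})^2$ from Theorem~\ref{ax3}, which is recovered when $d_i=1$ and $t$ is optimized.

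Concretely, first I would fix $t>1$ and, for each $i$, construct $E_i\subseteq C_{b,W_i}$ as the set of $\xi$ whose base-$b$ digits are $0$ on a prescribed sequence of blocks $B_{k}^{(i)}$ whose lengths and gaps grow so that the block-pattern forces $\om_{1}^{(b)}$-type and in fact $\wo_1^{(b)}$-type approximations, and are arbitrary (in $W_i$) elsewhere; the lacunary/self-similar structure gives $\dim_H(E_i)$ equal to the displayed ratio by the standard mass-distribution computation for missing-digit-with-forced-blocks Cantor sets (as in~\cite{bf72} and~\cite[Example~4.6]{falconer}). Crucially I arrange that \emph{the same sequence of scales} $N_k$ works for all $m$ coordinates simultaneously (synchronize the blocks), so that every $\ux\in E_1\times\cdots\times E_m$ satisfies \eqref{eq:c=1} for $q=b^{N_k}$ with the common constant tending to $0$, hence $\ux\in\mathcal{S}_m^{(b)}(w)$; intersecting each $E_i$ with a single rational affine hyperplane's complement (or removing a countable union of hyperplanes, which does not lower Hausdorff dimension of a Cantor set of positive dimension) puts the product in $\Rd$ and hence in $\mathcal{S}_m^{(b)\ast}(w)$, in fact in $\mathcal{S}_K^{(b)\ast}$ after taking $w=1/m$. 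Then by the left inequality in \eqref{eq:kproS},
\[
\dim_H(\mathcal{S}_K^{(b)\ast})\;\ge\;\dim_H(E_1\times\cdots\times E_m)\;\ge\;\sum_{i=1}^m \dim_H(E_i)\;=\;\sum_{i=1}^m d_i\cdot\frac{-wt^2+(w+1)t-1}{(w+1)(1-d_i)t^2+((w+2)d_i-1)t-d_i},
\]
and taking the supremum over $t>1$ gives the asserted bound. The numerical claim for $K=C_{3,\{0,2\}}\times C_{3,\{0,2\}}$ with $d_1=d_2=\log 2/\log 3$ and $w=1/2$ is then a one-variable optimization over $t$, yielding a value exceeding $0.1255$. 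The final sentence, dropping $0\in W_i$, follows by replacing the forced digit $0$ with forced blocks of a fixed allowable digit and subtracting its rational "tail" — this still forces good \emph{ordinary} (non-uniform) inhomogeneous-trivial and in fact the relevant non-$b$-ary singular behaviour after a translation argument exactly as in Remark~\ref{ureh}, so the estimate survives for $\mathcal{S}_K^{\ast}$ though not the $b$-ary refinement.

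The main obstacle I anticipate is \textbf{the synchronization across coordinates while keeping each $E_i$ of the full one-coordinate dimension}: the blocks forcing approximation in coordinate $i$ must be the \emph{same} places for all $i$, yet the dimension count for $E_i$ depends only on the density of free places, so in principle synchronization is free; the subtlety is that for the product point to be in $\mathcal{S}_m^{(b)}(w)$ one needs the approximation constant $c$ to go to $0$ \emph{uniformly in all coordinates at once}, which forces the forced-zero blocks to be long enough relative to $N_k$ in every coordinate simultaneously — this is automatic once lengths grow superlinearly, but one must check it does not degrade the density ratio below the claimed value. A secondary technical point is verifying that removing a countable union of hyperplanes leaves $\dim_H$ unchanged for these positive-dimensional self-similar-type sets; this is routine since each hyperplane meets $E_1\times\cdots\times E_m$ in a set of strictly smaller dimension (or one simply perturbs a single free digit in one coordinate to dodge all hyperplanes, as in the proof of Theorem~\ref{H}). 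Finally, writing out the exact rational function and confirming it is the genuine maximand (rather than an artifact of a suboptimal construction) requires tracking the Cantor-set dimension formula carefully; I would present the construction with an explicit block length parameter and then substitute to recover the displayed expression, deferring the $>0.1255$ verification to a short numerical remark.
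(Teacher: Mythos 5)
Your proposal follows essentially the same route as the paper's proof: both build synchronized per-coordinate Cantor-type subsets $\mathscr{Y}_i\subseteq C_{b,W_i}$ with forced-zero blocks and free digits elsewhere, apply Falconer's Proposition~\ref{falke} to lower bound each $\dim_H(\mathscr{Y}_i)$, combine via the left inequality of \eqref{eq:kproS}, handle the restriction to $\Rd$ via Lemma~\ref{daslemma}, and reduce the $0\notin W_i$ case by a rational translation. (One small slip: for that last reduction you cite Remark~\ref{ureh}, while the relevant translation argument appears in the proof of Theorem~\ref{cantor}.)
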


If $d_1=d_2=\cdots=d_m=1$  it can be shown that the claim simplifies to Theorem~\ref{ax3}.
Theorem~\ref{khr} complements an estimate by Khalil~\cite{khalil} for the same set $K$ given by
\begin{equation}  \label{eq:kalil}
\dim_H(\mathcal{S}_K) \leq 
\frac{2}{3}\dim(K)= \frac{4\log 2}{3\log 3}. 
\end{equation}
Some variant of Corollary~\ref{coro} can also be inferred
from Theorem~\ref{cantor} for general $K$ as in \eqref{eq:wiro},
however its statement turns out to be considerably weaker 
(in many cases trivial) than the 
expected analogous claim 
\begin{equation}  \label{eq:FR}
\dim_H(\mathcal{S}_K^{\ast}(w))\geq \dim_H(\mathcal{S}_K^{(b)\ast}(w))\geq (1-\frac{1}{m+1})\dim(K), \qquad w<(\sqrt{m}-1)^2/m,
\end{equation}
since we lack an
analogue of Theorem~\ref{DFSU} for Cantor sets.
Lower bounds for $\dim_P(\mathcal{S}_K^{(b)\ast}(w))$
could be obtained when combining Tricot's estimate
\eqref{eq:tricot} below, claim (ii) in Theorem~\ref{cantor} and~\cite[Theorem~A]{khalil},
upon evaluating the expressions $\alpha_{\ell}(\mu)$ 
defined in~\cite{khalil},
where $\mu$ is the natural measure on $K$.
This computation is only done explicitly for $m=2$ and $K=C_{3,\{0,2\}}\times C_{3,\{0,2\}}$ in~\cite[Corollary~1.4]{khalil}, from which
\eqref{eq:kalil} was deduced.
However, only for $m\geq 5$ the induced bounds are not implied
by Theorem~\ref{blau}, so \eqref{eq:kalil} is not helpful in this matter. We further mention
that a general inhomogeneous, $b$-ary version of Theorems~\ref{cantor},~\ref{khr} cannot hold as soon as $W\subsetneq \{0,1,\ldots,b-1\}$, as then
$\{ b^{N}\ux-\underline{p}-\ut: \ux\in K, \underline{p}\in\mathbb{Z}^m, N\geq 1\}$ 
avoids a neighborhood of $\underline{0}$ 
for any $\ut\subseteq  [0,1)^m\setminus K$. 
It remains however unclear to us for the sets $\mathcal{S}_{m,\ut}^{\ast}(w)\cap K$ where
we do not restrict to $b$-ary setting.

\section{Proofs}

\subsection{Preparation for metrical results}  \label{sec2.1}

Our proofs below are based on a result by Tricot~\cite{tricot} on Cartesian products.

\begin{theorem}[Tricot]
	Any measurable sets $A\subseteq \mathbb{R}^{d_1},B\subseteq \R^{d_2}$ satisfy
	\begin{equation}  \label{eq:tricot}
	\dim_H(A\times B) \leq \dim_P(A) + \dim_H(B). 
	\end{equation}
\end{theorem}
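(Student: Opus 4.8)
The plan is to reduce the inequality to its analogue with the upper box-counting dimension $\overline{\dim}_B$ in place of $\dim_P$, and then to promote it using the classical description of packing dimension as the \emph{modified} upper box-counting dimension (see Falconer~\cite{falconer}): for any $A\subseteq\mathbb{R}^{d_1}$,
\[
\dim_P(A)=\inf\Big\{\,\sup_{i\ge1}\overline{\dim}_B(A_i)\ :\ A\subseteq\bigcup_{i\ge1}A_i\,\Big\}.
\]
No measurability is really needed here, since Hausdorff dimension and both box dimensions are defined for arbitrary sets; I would equip $\mathbb{R}^{d_1+d_2}$ with the sup metric, so that $\operatorname{diam}(V\times U)=\max(\operatorname{diam}V,\operatorname{diam}U)$, which changes none of the dimensions.

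\textbf{Step 1 (box version for bounded sets).} First I would show that if $A\subseteq\mathbb{R}^{d_1}$ is bounded and $B\subseteq\mathbb{R}^{d_2}$ is arbitrary, then $\dim_H(A\times B)\le\overline{\dim}_B(A)+\dim_H(B)$. Fix $s>\overline{\dim}_B(A)$ and $t>\dim_H(B)$. Since $t>\dim_H(B)$ we have $\mathcal{H}^t(B)=0$, so for every $\delta>0$ there is a cover $\{U_j\}_j$ of $B$ with $\rho_j:=\operatorname{diam}U_j<\delta$ and $\sum_j\rho_j^{\,t}\le1$. Since $s>\overline{\dim}_B(A)$, there is a threshold $\rho_0>0$ such that $A$ can be covered by fewer than $\rho^{-s}$ sets of diameter at most $\rho$ whenever $\rho<\rho_0$; choosing $\delta<\rho_0$, cover $A$ for each $j$ by sets $V_{j,1},\dots,V_{j,N_j}$ of diameter $\le\rho_j$ with $N_j<\rho_j^{-s}$. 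Then $\{V_{j,k}\times U_j\}_{j,k}$ covers $A\times B$ by sets of diameter $\le\rho_j<\delta$, and
\[
\sum_{j,k}\big(\operatorname{diam}(V_{j,k}\times U_j)\big)^{s+t}\le\sum_j N_j\,\rho_j^{\,s+t}<\sum_j\rho_j^{-s}\rho_j^{\,s+t}=\sum_j\rho_j^{\,t}\le1.
\]
Letting $\delta\to0$ gives $\mathcal{H}^{s+t}(A\times B)\le1<\infty$, hence $\dim_H(A\times B)\le s+t$; since $s,t$ were arbitrary above the respective dimensions, the claim follows.

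\textbf{Step 2 (decomposition).} For general $A$, take any countable cover $A\subseteq\bigcup_i A_i$. Splitting each $A_i$ into the bounded pieces $A_i\cap[-n,n]^{d_1}$, whose upper box dimensions do not exceed $\overline{\dim}_B(A_i)$ by monotonicity, I may assume every $A_i$ is bounded. Then $A\times B\subseteq\bigcup_i(A_i\times B)$, so countable stability of Hausdorff dimension and Step~1 give
\[
\dim_H(A\times B)=\sup_i\dim_H(A_i\times B)\le\sup_i\big(\overline{\dim}_B(A_i)+\dim_H(B)\big)=\sup_i\overline{\dim}_B(A_i)+\dim_H(B).
\]
Taking the infimum over all such covers and invoking the characterization of $\dim_P$ above yields $\dim_H(A\times B)\le\dim_P(A)+\dim_H(B)$.

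The one substantive point is Step~1: the cover of $B$ must be used to drive, element by element, the scale $\rho_j$ at which $A$ is subsequently covered, and it is exactly this scale-matching that makes the exponents $s$ and $t$ \emph{add} rather than combine as a maximum. The only technical care needed is that the box-counting bound $N_\rho(A)<\rho^{-s}$ is available only below a threshold $\rho_0$, which forces the cover of $B$ to have mesh $<\rho_0$; this is harmless because $\mathcal{H}^t(B)=0$ permits arbitrarily fine covers. Step~2 is then routine, modulo the standard identity $\dim_P=\overline{\dim}_{MB}$.
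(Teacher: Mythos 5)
Your proof is correct. The paper does not reproduce a proof of this result; it simply cites Tricot~\cite{tricot} and Bishop--Peres~\cite{bishop}. What you have written is essentially the standard textbook argument found in those references and in Falconer: first establish the box-counting version for bounded $A$ by matching the scale at which $A$ is covered to the diameter of each covering set of $B$, then bootstrap to packing dimension via the characterization $\dim_P=\overline{\dim}_{MB}$ together with the countable stability of Hausdorff dimension. The scale-matching step is indeed the crux, and your accounting $\sum_j N_j\rho_j^{s+t}<\sum_j\rho_j^t\le 1$ is exactly right. The only nit is that some $\rho_j=\operatorname{diam}U_j$ could be zero, in which case $\rho_j^{-s}$ is undefined and the matching cover of $A$ by sets of diameter $\le\rho_j$ is impossible for uncountable $A$; this is handled, as usual, by slightly enlarging such $U_j$ to positive diameter while keeping $\sum\rho_j^t$ small, which costs nothing. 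With that routine adjustment noted, the argument is complete and sound.
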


See also Bishop and Peres~\cite{bishop}.
It is well-known that Lipschitz map $\Psi: \mathbb{R}^{d_1}\to \mathbb{R}^{d_2}$ satisfies
$\dim_H(\Psi(A))\leq \dim_H(A)$ for any $A\subseteq \mathbb{R}^{d_1}$,
see~\cite{falconer}. Applied to $d_1=2m, d_2=m$ and
the sum map $\Psi(A\times B)=A+B$ for $A,B\subseteq \Rm$ we see
$\dim_H(A+B) \leq \dim_H(A\times B)$. 
Combining with \eqref{eq:tricot} we obtain
\begin{equation} \label{eq:thor}
\dim_P(A) \geq \dim_H(A+B) - \dim_H(B)
\end{equation}
for any measurable $A,B\subseteq \Rm$. Moreover it is well-known (see again Falconer~\cite{falconer}) that
any measurable $A\subseteq \Rm$ satisfies
$\dim_H(A)\leq \dim_{P}(A)$.
Specializing to $A=B\subseteq \Rm$ and combining 
with above estimates, we see that
any $A\subseteq \Rm$ satisfies
\begin{equation}  \label{eq:AB}
\dim_P(A) \geq \frac{\dim_P(A) + \dim_H(A)}{2}\geq
\frac{\dim_H(A\times A)}{2}\geq \frac{\dim_H(A+A)}{2}.
\end{equation}

Due to our restriction to $\Rd$ in several claims, we
require the following lemma whose statement appears rather 
technical but which admits a straightforward proof. 
We formulate it in a general
Cantor set setting that allows us to apply it 
to Theorem~\ref{cantor} as well as any any other instance.
Recall the notation $\mu=\prod \mu_{b,W_i}$
for the natural measure on $K=\prod C_{b,W_i}$. 

\begin{lemma}  \label{daslemma}
	Let $\mathcal{A}\subseteq \mathbb{N}$ be infinite and denote its complement by
	$\mathcal{B}=\mathbb{N}\setminus \mathcal{A}$. Let 
	$b\geq 2$ an integer and $W_i\subseteq \{0,1,\ldots,b-1\}$, $|W_i|\geq 2$ for $1\leq i\leq m$,
	and $K$ as in \eqref{eq:wiro}. Given fixed sequences $(c_{i,n})_{n\geq 1}\in W_i^{\mathbb{N}}$ for $1\leq i\leq m$
	and any given partial maps
	\begin{align*}
	&\chi_i: \mathcal{B}\to W_i,    \\
	&\chi_i(n)= c_{i,n}
	\end{align*}
	writing $\ux=(\xi_1,\ldots,\xi_m)\in K$ consider the diagonal map
	\begin{align*}
	&\varphi: \prod_{i=1}^{m} (C_{b,W_i}\setminus \mathbb{Q}) \to K, \\
	&\varphi(\ux)=(\varphi_1(\xi_1),\ldots,\varphi_m(\xi_m)),
	\end{align*}
	with coordinate functions
	\[
	\varphi_i(\xi_i)= (0.e_{i,1}e_{i,2}\ldots )_b=\sum_{n\geq 1} \frac{e_{i,n}}{b^n}, \qquad 1\leq i\leq m, 
	\]
	where if $\xi_i=(0.d_{i,1}d_{i,2}\ldots)_b$, $d_{i,n}\in W_i$, then
	\[
	e_{i,n}= \begin{cases}
	d_{i,n}, \qquad n\in \mathcal{A},\\ c_{i,n},  \qquad n\in\mathcal{B}.
	\end{cases}
	\]
	Then, if $\mathcal{T}$ is any countable union of affine hyperplanes in $\Rm$, then for almost
	all $\ux\in K$ with respect to $\mu$ we have
	$\varphi(\ux)\notin \mathcal{T}$.
\end{lemma}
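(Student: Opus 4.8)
The plan is to reduce, by countable subadditivity of $\mu$, to a \emph{single} affine hyperplane
$H=\{\underline{y}=(y_1,\ldots,y_m)\in\Rm: a_1y_1+\cdots+a_my_m=\beta\}$ with $(a_1,\ldots,a_m)\neq\underline{0}$, and then to show $\mu(\{\ux\in K:\varphi(\ux)\in H\})=0$. Fix an index $i_0$ with $a_{i_0}\neq 0$; after relabelling coordinates assume $i_0=m$. Since $\mu=\prod_{i=1}^m\mu_{b,W_i}$ is a product measure and each $\mu_{b,W_i}$ is non-atomic (so assigns zero mass to $\mathbb{Q}$, whence $\varphi$ is defined $\mu$-a.e.\ on $K$), Fubini's theorem lets me fix $(\xi_1,\ldots,\xi_{m-1})$ and reduces matters to showing that for every real number $c$ — here $c=c(\xi_1,\ldots,\xi_{m-1})=a_m^{-1}\bigl(\beta-\sum_{i<m}a_i\varphi_i(\xi_i)\bigr)$ — one has $\mu_{b,W_m}(\{\xi_m:\varphi_m(\xi_m)=c\})=0$. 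Equivalently, it suffices to prove that the pushforward measure $(\varphi_m)_\ast\mu_{b,W_m}$ on $\R$ is non-atomic. Note that for $m=1$ this is exactly the assertion (a countable union of affine hyperplanes in $\R$ being a countable set), so no separate argument is needed there.

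To establish non-atomicity, write $\xi_m=\sum_{n\geq 1}d_nb^{-n}$, where under $\mu_{b,W_m}$ the digits $d_n$ are i.i.d.\ uniform on $W_m$. Then $\varphi_m(\xi_m)=\gamma+R$ with $\gamma=\sum_{n\in\mathcal{B}}c_{m,n}b^{-n}$ a fixed constant and $R=\sum_{n\in\mathcal{A}}d_nb^{-n}$, so it is enough to show that the law of $R$ has no atoms. Fix a target $t\in\R$, enumerate $\mathcal{A}=\{n_1<n_2<\cdots\}$ (possible as $\mathcal{A}$ is infinite), and for each $J$ split $R=H_J+R_J'$ with $H_J=\sum_{j\leq J}d_{n_j}b^{-n_j}$ independent of $R_J'=\sum_{j>J}d_{n_j}b^{-n_j}$. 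Since $H_J$ has a terminating base-$b$ expansion with digits in $\{0,\ldots,b-1\}$ supported on the positions $n_1<\cdots<n_J$, distinct digit tuples $(d_{n_1},\ldots,d_{n_J})$ give distinct values; hence $H_J$ takes each of its $|W_m|^J$ values with probability exactly $|W_m|^{-J}$, and these values are integer multiples of $b^{-n_J}$, so pairwise at distance $\geq b^{-n_J}$. On the other hand $R_J'$ lies in the interval $[w^-\sigma_J,\,w^+\sigma_J]$, where $w^{\pm}$ are the largest and smallest elements of $W_m$ and $\sigma_J=\sum_{j>J}b^{-n_j}\leq b^{-n_J}/(b-1)$; since $w^+-w^-\leq b-1$, this interval has length at most $b^{-n_J}$. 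Consequently at most two values $h$ of $H_J$ can satisfy $t-h\in[w^-\sigma_J,w^+\sigma_J]$, so
\[
\mu_{b,W_m}\bigl(\{\xi_m:\varphi_m(\xi_m)=c\}\bigr)=P(R=t)=\sum_{h}P(H_J=h)\,P(R_J'=t-h)\leq 2\,|W_m|^{-J}.
\]
Letting $J\to\infty$ and using $|W_m|\geq 2$ yields $P(R=t)=0$, completing the proof.

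The main obstacle is the non-atomicity step, and within it the one genuinely quantitative point: the head $H_J$ must spread its mass over exponentially many \emph{well-separated} values — which is where uniqueness of terminating base-$b$ expansions enters — while the tail $R_J'$ is confined to an interval of \emph{comparable} (not larger) width, so that only a bounded number of head-values are "active" for any given target. The Fubini reduction and the passage from a single hyperplane to a countable union are routine; the only care needed is the null set of rationals on which $\varphi$ is undefined, and this is harmless precisely because each $\mu_{b,W_i}$ is non-atomic.
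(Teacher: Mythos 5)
Your proof is correct and follows essentially the same route as the paper's: reduce by Fubini (the paper phrases this as an induction on $m$, separating out the countably many hyperplanes of the form $\{x_1=\xi_1\}$ before integrating over $\xi_1$) to a single coordinate, then show that the preimage of a point under $\varphi_m$ has $\mu_{b,W_m}$-measure zero. The paper's one-dimensional step is shorter --- it simply notes that $\varphi_m^{-1}(\{t\})$ is contained in a cylinder with all digits on the infinite set $\mathcal{A}$ prescribed, hence has measure zero --- whereas your head-and-tail decomposition $R=H_J+R_J'$ establishes the same non-atomicity quantitatively, with the side benefit of handling possible ambiguities of base-$b$ expansions without further comment.
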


The map $\varphi$ changes the $b$-ary digits of $\xi_i\in C_{b,W_i}$ 
to prescribed digits on some set $\mathcal{B}$ and preserves the digits 
on the infinite complement set $\mathcal{A}$, in each coordinate.
The special case $W_i=\{0,1,\ldots,b-1\}$ for $1\leq i\leq m$
gives general $\ux\in K=\Rm$.
The restriciton to irrational $\xi_i$ is necessary only
if $\{0,b-1\}\subseteq W_i$, but if true then certain rational
numbers have two different base $b$
expansions, and then indeed our map would not be well-defined.
However, from a metrical point of view, neglecting countably many
affine hyperplanes does not make a difference for our claim.

\begin{proof}[Proof of Lemma~\ref{daslemma}]
	We proceed by induction on $m$. Let $m=1$. Then $\mathcal{T}$
	consists of countably many singleton points.
    Consider the set $\varphi^{-1}(\mathcal{T})$ defined
    on $\mathcal{T}\cap \varphi(K)$, 
	which consists of numbers in $K$ with the same digital representation
	of some element in $\mathcal{T}$ within intervals $\mathcal{A}$
	and arbitrary digits in $W=W_1$ at the remaining places in $\mathcal{B}$.
	Since $\mathcal{A}$ is infinite and choosing a fixed base
	$b$ digit within $W$ at some given position shrinks the $\mu_1=\mu_{b,W_1}$ measure of a set by a factor $|W|^{-1}<1$, the preimage of any singleton has $\mu_1$ measure $0$.
	Thus, as $\mathcal{T}$ is only countable, the set $\varphi^{-1}(\mathcal{T})$
	has $\mu_1$ measure $0$. Hence
	$K\setminus \varphi^{-1}(\mathcal{T})$ has full measure, in other
	words for some full measure set $F\subseteq K$ we have $\varphi(F)\subseteq \mathcal{T}^c$.
	The case $m=1$ follows.  
	
	We explain the induction step from $m-1$ to $m$. Recall that $\mu=\prod \mu_i$
	where $\mu_i:=\mu_{b,W_i}$ is the natural Cantor measure on $C_{b,W_i}\subseteq \mathbb{R}$.
	Since $\mathcal{T}$ consists of only countably many affine
	hyperplanes, there
	are in particular only countably many of them of the form 
	$H_{\xi_1}: x_1=\xi_1$, i.e. consisting of vectors $(\xi_1,x_2,\ldots,x_m)$
	with prescribed first coordinate $x_1$. Since affine hyperplanes
	have $\mu$ measure $0$ and sigma-additivity of measures,
	we can omit this countable union.
	Now for any other $\xi_1$ with $H_{\xi_1}$ not among
	the affine hyperplanes defining $\mathcal{T}$, the affine hyperplane
	$H_{\xi_1}$ intersects any affine hyperplane in $\mathcal{T}$ in 
	a lower dimensional affine space. 
	 We apply the induction hypothesis to any such $H_{\xi_1}$
	 to see that 
	 $\mu_{(m-1)}(\{ x\in H_{\xi_1}: \varphi(x)\in \mathcal{T}\})=0$, where
	 $\mu_{(m-1)}=\mu_2\times \cdots\times \mu_m$. 
	 Writing $\mu=\mu_1\times \mu_{(m-1)}$ and integrating 
	 the constant $0$ function over
	the first variable $\xi_1$ with respect to $\mu_1$, 
	we see that the for a full $\mu$ measure set $F$ we have $\varphi(F)\notin \mathcal{T}$, equivalent to the claim of the theorem.
\end{proof}

The proofs of our main results combine above observations,
in particular \eqref{eq:thor}, with 
ideas on sumsets similar as in~\cite{arxiv}. Before
we provide a short proof of Theorem~\ref{t4}.

\subsection{Proof of Theorem~\ref{t4}}

By combination of~\cite[Theorem~1.7, Theorem~2.9 and Remark~1.8]{dfsu1}, for $m\geq 2$ and $w<1$ sufficiently close to $1$,
we have
\[
\dim_H(\mathcal{S}_m^{\ast}(w))\leq \epsilon, \qquad \dim_P(\mathcal{S}_m^{\ast}(w))=1.
\]
Hence, by \eqref{eq:thor} we have
\begin{align*}
\dim_H(\mathcal{S}_m^{\ast}(w)+\mathcal{S}_m^{\ast}(w))
\leq 
\dim_P(\mathcal{S}_m^{\ast}(w))+\dim_H(\mathcal{S}_m^{\ast}(w))\leq 1+\epsilon,
\end{align*}
the right claim. For the left identity, the lower bound follows from considering the sum
of two rational non-parallel lines in $\Rm$
given by equations $x_j=r_jx_1$ resp.
$x_j=s_jx_1$ with $r_j, s_j\in\mathbb{Q}$, $2\leq j\leq m$, 
which becomes a two-dimensional subspace of $\Rm$ consisting of vectors
with $\wo_m(\ux)\in \{ 1,\infty\}$ depending on 
whether $\ux\in\mathbb{Q}^m$ or not. For
the upper bound, consider
for any rational affine subspace $\mathscr{V}\subseteq \Rm$
the projection of $\mathcal{S}_m(w)\cap \mathscr{V}$ to a space spanned 
by coordinate axes that induce a maximum $\mathbb{Q}$-linearly
independent set (i.e. all other coordinates of $\ux\in \mathscr{V}$ 
are $\mathbb{Q}$-linear combinations, inducing a rational subspace). 
The image consists of vectors in $\mathcal{S}_k^{\ast}(w)$ for some $1\leq k<m$
where the first claim applies. Clearly the projection is bi-Lipschitz for any fixed rational subspace and keeps thus the dimensions invariant.
By sigma-additivity of measures,
this
exhausts the Hausdorff/packing dimension of $\mathcal{S}_m(w)\cap \mathscr{V}$.
This argument shows that
\[
\dim_H(\mathcal{S}_m(w))\leq\dim_P(\mathcal{S}_m(w))\leq \max_{1\leq k\leq m} \dim_P(\mathcal{S}_k^{\ast}(w)).
\]
For $k=1$ this is clearly bounded from above by $1$,
and by our observations above for $2\leq k\leq m$ as well when $w$ is close enough to $1$.
Similarly as above via \eqref{eq:thor} we conclude
\[
\dim_H(\mathcal{S}_m(w)+\mathcal{S}_m(w))
\leq 
\dim_P(\mathcal{S}_m(w))+\dim_H(\mathcal{S}_m(w))\leq 2.
\]

\subsection{Proof of Theorem~\ref{t3}}

The claims follow easily from the next lemma. 

\begin{lemma} \label{le}
	Let $\nu_0>1, \nu_1>1$ and $\Lambda=\nu_0\nu_1$.
	If $w_0, w_1$ satisfy $w_i<y_i:=(\nu_i-1)/\Lambda$
	for $i=0,1$, then 
	we have the identity of sets 
	\begin{equation}  \label{eq:G}
	\mathcal{S}_{m,\ut_0}^{(b)}(w_0) + \mathcal{S}_{m,\ut_1}^{(b)}(w_1) =
	\mathcal{S}_{m,\ut_0}(w_0) + \mathcal{S}_{m,\ut_1}(w_1)=\Rm.
	\end{equation}
	Morever, the sets
		\begin{equation}  \label{eq:G5}
	\mathcal{S}_{m,\ut_0}^{(b)\ast}(w_0) + \mathcal{S}_{m,\ut_1}^{(b)\ast}(w_1), \qquad 
	\mathcal{S}_{m,\ut_0}^{\ast}(w_0) + \mathcal{S}_{m,\ut_1}^{\ast}(w_1)
	\end{equation}
	still have full $m$-dimensional Lebesgue measure.
\end{lemma}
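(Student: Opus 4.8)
Since $\mathcal{S}_{m,\ut}^{(b)}(w)\subseteq \mathcal{S}_{m,\ut}(w)$ and $\mathcal{S}_{m,\ut}^{(b)\ast}(w)\subseteq \mathcal{S}_{m,\ut}^{\ast}(w)$ for all $\ut,w$, it suffices to treat the two $b$-ary sumsets, and as all sets occurring are invariant under translation by $\mathbb{Z}^m$ (for $\Rd$ first replace the defining countable family of hyperplanes by its $\mathbb{Z}^m$-orbit, still a countable family of hyperplanes) we may assume $\ux,\ut_0,\ut_1\in[0,1)^m$. The task then reduces to: for (almost) every $\ux$ find $\underline{y}_0,\underline{y}_1$ with $\underline{y}_0+\underline{y}_1=\ux$, $\underline{y}_i\in \mathcal{S}_{m,\ut_i}^{(b)}(w_i)$, and --- for the full-measure claim --- additionally $\underline{y}_0,\underline{y}_1\notin\mathcal{T}$, where $\Rd=\Rm\setminus\mathcal{T}$.

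First I set up a staircase of digit blocks, the same pattern in every coordinate, so that one value $q=b^{t_{2k-1}}$ (resp.\ $q=b^{t_{2k}}$) approximates all coordinates simultaneously. With $T$ a large integer, put $t_{2k-1}=\lceil\Lambda^{k-1}T\rceil$ and $t_{2k}=\lceil\nu_0\Lambda^{k-1}T\rceil$; since $1<\nu_0<\Lambda=\nu_0\nu_1$ these strictly increase to $\infty$, the within-period growth factors being $\nu_0$ and $\nu_1$, which is precisely where the hypotheses $w_i<y_i=(\nu_i-1)/\Lambda$ will enter. Call $(t_{2k-1},t_{2k}]$ the ``$\underline{y}_0$-blocks'' and $(t_{2k},t_{2k+1}]$ the ``$\underline{y}_1$-blocks''. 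I prescribe the base-$b$ digits of $\underline{y}_0\in[0,1]^m$: on an initial piece of length $\ell_k$ of the $k$-th $\underline{y}_0$-block each coordinate of $\underline{y}_0$ copies the digits of the corresponding coordinate of $\ut_0$, which forces $\Vert b^{t_{2k-1}}\underline{y}_0-\underline{p}-\ut_0\Vert\ll b^{-\ell_k}$; on an initial piece of length $\ell_k'$ of the $k$-th $\underline{y}_1$-block the $j$-th coordinate of $\underline{y}_0$ copies the digits of the fractional part of $b^{t_{2k}}\xi_j-\theta_1^{(j)}$, which by arithmetic modulo $1$ alone, with no carry analysis, forces $\Vert b^{t_{2k}}\underline{y}_1-\underline{p}-\ut_1\Vert\ll b^{-\ell_k'}$ for $\underline{y}_1:=\ux-\underline{y}_0$; the remaining digits of the blocks, and the first $t_1$ digits, are kept free for now. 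Taking $\ell_k$ to be, say, the midpoint of $[\,w_0t_{2k+1},\,t_{2k}-t_{2k-1}\,]$ (and $\ell_k'$ analogously), one checks that the $k$-th $\underline{y}_0$-block already supplies the required approximation at every scale $Q$ with $\log_bQ\in[t_{2k-1},t_{2k+1}]$ once $k\ge k_0(c)$, because $t_{2k}-t_{2k-1}\sim y_0\Lambda^kT$ while $w_0t_{2k+1}\sim w_0\Lambda^kT$ and $w_0<y_0$; the same computation for the $\underline{y}_1$-blocks uses $w_1<y_1$. As consecutive blocks of the same type have overlapping scale ranges, every large $Q$ gets covered, i.e.\ $\underline{y}_0\in \mathcal{S}_{m,\ut_0}^{(b)}(w_0)$ and $\underline{y}_1\in \mathcal{S}_{m,\ut_1}^{(b)}(w_1)$; this gives $\mathcal{S}_{m,\ut_0}^{(b)}(w_0)+\mathcal{S}_{m,\ut_1}^{(b)}(w_1)=\Rm$, hence \eqref{eq:G}.

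For the full-measure statement \eqref{eq:G5} I now fill the free digits of $\underline{y}_0$ with those of $\ux$, so that $g\colon\ux\mapsto\underline{y}_0$ and $h\colon\ux\mapsto\underline{y}_1=\ux-g(\ux)$ act coordinatewise, the $j$-th coordinate depending only on $\xi_j$ (the encoded fractional parts do). The free positions form an infinite set in each coordinate --- here the \emph{strict} inequalities $w_i<y_i$ are essential, since they leave a fixed positive share of every block free --- so $g_j$ copies infinitely many digits of $\xi_j$ and $g_j^{-1}(\{y\})$ is Lebesgue null; a Lebesgue density argument gives the same for $h_j$, because within a $\underline{y}_0$-core the relation $h_j(\xi)=y$ together with the digits of $\xi$ to the right of the core pins down the digits of $\xi$ on the core (subtraction in a fixed window being invertible), so $\{h_j=y\}$ cannot almost fill a $b$-ary cylinder. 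Granting these two non-atomicity properties, the induction on $m$ in the proof of Lemma~\ref{daslemma} --- which uses only that the map is coordinatewise with null point-preimages, together with Fubini and the nullity of hyperplanes --- applies word for word to $g$ and to $h$, yielding $g(\ux)\notin\mathcal{T}$ and $h(\ux)\notin\mathcal{T}$ for a.e.\ $\ux$; intersecting these conull sets finishes the proof, and the non-$b$-ary $\ast$-sumset follows by inclusion.

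The step I expect to be hardest is the bookkeeping in the second paragraph: producing a \emph{single} $\underline{y}_0$ whose matching blocks are long enough to serve every $c>0$ at every large scale, while still leaving infinitely many genuinely free digits per coordinate (as needed in the third paragraph). This is possible exactly because the slack in $w_i<y_i=(\nu_i-1)/\Lambda$ means each block forfeits only a fixed proportion of its digits to forcing, so the $O_c(1)$ errors and the integer roundings in the $t_k$ are absorbed once $k$ is large. The remaining delicate point --- carries in the subtraction $\ux-\underline{y}_0$ --- is sidestepped by never prescribing a digit of $\underline{y}_1$: the approximation quality of $\underline{y}_1$ is extracted from the congruence $b^{N}\underline{y}_1\equiv b^{N}\ux-b^{N}\underline{y}_0\pmod{\mathbb{Z}^m}$, whose right-hand side depends, modulo $\mathbb{Z}^m$, only on the base-$b$ digits of $\underline{y}_0$ and of $\ux$ beyond position $N$.
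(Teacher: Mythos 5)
Your construction is correct and follows the same basic blueprint as the paper's: split digit positions into alternating blocks whose lengths grow by factors $\nu_0,\nu_1$, use the inequalities $w_i<y_i$ to guarantee that each block leaves enough room for the prescribed approximation to persist at all scales in its range, and invoke Lemma~\ref{daslemma} for the full-measure refinement. The genuine differences are in bookkeeping: the paper builds $\underline{x}_0$ and $\underline{x}_1$ symmetrically (each copies $\ut_i$'s digits on its own parity class of blocks and $\ux$'s digits on the other, with explicit rational correction terms $r_j$ glued in to enforce $\underline{x}_0+\underline{x}_1=\ux$), whereas you load \emph{all} the prescribed information into $\underline{y}_0$ --- both the $\ut_0$-digits and the digits that force $\underline{y}_1=\ux-\underline{y}_0$ to match $\ut_1$ --- and read off $\underline{y}_1$'s approximation quality purely from the congruence $b^{t_{2k}}\underline{y}_1\equiv b^{t_{2k}}\ux-b^{t_{2k}}\underline{y}_0 \pmod{\mathbb{Z}^m}$. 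This buys you something concrete: you never have to track carries through a rational correction, an issue the paper glosses over. The cost is that $\underline{y}_1=h(\ux)$ is no longer a literal ``freeze-some-digits, copy-the-rest'' map, so you need the observation --- which you state and which is correct --- that the proof of Lemma~\ref{daslemma} uses nothing beyond the map being coordinatewise Borel with null point-preimages.

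One place your write-up is thinner than it should be is the non-atomicity of $h_j$. Your phrasing about a ``Lebesgue density argument'' and ``subtraction in a fixed window being invertible'' is hard to pin down, in part because the free positions of $\underline{y}_0$ are exactly the positions where $h_j$ does \emph{not} depend on $\xi_j$, while the encoded positions depend nonlocally on the tail of $\xi_j$ through borrows. A cleaner route, which makes the argument airtight: $h_j(\xi)=y$ is equivalent to $g_j(\xi)=\xi-y$, and $g_j(\xi)$ has a prescribed digit at each of infinitely many frozen positions $n$; since $\xi\mapsto\xi-y\pmod 1$ is measure preserving, the condition ``$\xi-y$ has the prescribed digit at each of the first $k$ frozen positions'' carves out a set of measure $b^{-k}$, so $h_j^{-1}(\{y\})$ is null. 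With that replacement the full-measure statement follows exactly as you outline, and the $\ast$-free identities are then a direct consequence of the construction applied to \emph{every} $\ux$.
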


Assume the lemma is true. Then putting
\[
\nu_0=\frac{1+\sqrt{w}}{1-w}>1, \qquad \nu_1=\frac{1}{\sqrt{w}}>1,
\]
a calculation verifies $y_0=w$, $y_1=w^{\prime}$ and thus the sets
in \eqref{eq:G0} equal $\Rm$, and those in \eqref{eq:fullm} still
have full measure. 
To finish the proof of the theorem,
we verify Lemma~\ref{le}. 

Let $\ux=(\ux_1,\ldots,\ux_m)\in\Rm$ arbitrary. 
For $i=0,1$, 
we have to find $\underline{x}_i\in \mathcal{S}_{m,\ut_i}^{(b)}(w_i)$
for every $w_i<y_i$,
that sum up to $\ux$, with $y_i=(\nu_i-1)/\Lambda$
where $\Lambda=\nu_0\nu_1$. The full measure claim will
be treated later.
We first assume the homogeneous case 
$\ut_1=\ut_1=\underline{0}$ and later explain 
how to pass to general $\ut_i\in\Rm$.

Define a sequence of intervals $I_j=[g_j,h_j]\cap \mathbb{Z}$, $j\geq 0$, that partitions the positive integers, inductively as follows: Take $M>0$ a large integer. Then set $g_0=1, h_0=M, g_1=M+1$ and for $j\geq 1$ let
\[
h_j= \lfloor \nu_i g_j\rfloor, \qquad g_{j+1}=h_j+1,
\]
where $i\in\{0,1\}$ is so that $j\equiv i\bmod 2$. Notice $h_j/g_j=\nu_i+o(1)$.

Let $b\geq 2$ be any integer.
Now consider the vector $\underline{x}_0=(x_{0,1},\ldots,x_{0,m})\in\Rm$
derived from $\ux$ as follows. For each $x_{0,\ell}$ the base $b$ representation
has digit has $0$ in intervals $I_j$ for even $j$ and the same digits
as $\xi_{\ell}$ for odd $j$, for $1\leq \ell\leq m$. Conversely, let $\underline{x}_1=(x_{1,1},\ldots,x_{1,m})$ where each $x_{1,\ell}$
has base $b$ digit has $0$ in intervals $I_j$ for odd $j$ and the same digits
as $\xi_{\ell}$ for even $j$. Obviously $\underline{x}_0+\underline{x}_1=\ux$.
We must show that $\underline{x}_i\in \mathcal{S}_{m}^{(b)}(w_i)$ for every $w_i<y_i$, to finish the proof of \eqref{eq:G0}
 for the homogeneous case.

Let $Q=b^R$ be any large parameter. For $i=0,1$,
let $j=j(Q,i)$ be the largest index 
with $j\not\equiv i\bmod 2$ so that $R\geq h_{j}$.
Thus $b^{h_j}\leq Q\ll b^{h_{j+2}}\ll b^{\Lambda h_j}$.
By construction, the first non-zero base $b$ digit 
of $b^{h_j}x_{i,\ell}$ after the comma is not before position $h_{j+1}-h_{j}$,
in each coordinate $1\leq \ell\leq m$. 
Hence, if we define for $i=0,1$
the integer vectors $\underline{p}_j=\lfloor b^{h_j}\underline{x}_i\rfloor$,
where we mean that the floor function is applied in each coordinate, then
\[
\Vert b^{h_j}\underline{x}_i - \underline{p}_j\Vert  \ll b^{-(h_{j+1}-h_j) } \ll
b^{-(\nu_i-1) h_j }\ll Q^{-\frac{\nu_i-1}{ \Lambda  }}, \qquad i=0,1.
\]
Since $Q$ was arbitrary indeed $\wo_m^{(b)}(\underline{x}_i)\geq (\nu_i-1)/\Lambda=y_i$, hence $\underline{x}_i\in \mathcal{S}_m^{(b)}(w_i)$ for the induced
$\underline{x}_i$ and any $w_i<y_i$, as required.
The full measure claim \eqref{eq:fullm} 
of the theorem follows from Lemma~\ref{daslemma} applied
to $\mathcal{A}$ the union over intervals $I_j$
with $j\not\equiv i\bmod 2$ and the complement $\mathcal{B}$ the union
over those $I_j$ with $j\equiv i\bmod 2$, and $K=\Rm$.
Indeed, it directly implies that Lebesgue almost all $\ux\in\Rm$ 
in fact induce $\underline{x}_i\in \mathcal{S}_{m,\ut_i}^{(b)\ast}(w_i)$,
$i=0,1$.

Now we explain the modification for general $\ut_i=(\theta_{i,1},\ldots,\theta_{i,m})$, $i=0,1$. Instead of 
letting the base $b$ digits of the $x_{i,\ell}$, $1\leq \ell\leq m$, be $0$ in the respective intervals for $j\equiv i\bmod 2$, 
in these intervals we follow the base $b$ representation of $\theta_{i,\ell}$ from its start. 
We give more details. Without loss of generality assume
$\ut_i\in[0,1)^m, \ux_i\in[0,1)^m$ and let
\[
\theta_{i,\ell}= (0.a_{i,1}^{\ell}a_{i,2}^{\ell}\ldots)_b=
\frac{a_{i,1}^{\ell}}{b}+ \frac{a_{i,2}^{\ell}}{b^2} + \cdots , \qquad\qquad i\in\{ 0,1\},\;\; 1\leq \ell\leq m,
\]
and
\[
\xi_{i,\ell}= (0.d_{i,1}^{\ell}d_{i,2}^{\ell}\ldots)_b= \frac{d_{i,1}^{\ell}}{b}+ \frac{d_{i,2}^{\ell}}{b^2} + \cdots , \qquad\qquad i\in\{ 0,1\},\;\; 1\leq \ell\leq m,
\]
be the base $b$ representation of $\theta_{i,\ell}$ and 
$\xi_{i,\ell}$, respectively.
For $i=0,1$ we define $\underline{\alpha}_i=(\alpha_{i,1},\ldots,\alpha_{i,m})$
with coordinates $\alpha_{i,\ell}\in\mathbb{R}$ for $1\leq \ell\leq m$
defined as follows.
For every $j\equiv i\bmod 2$, 
in $I_j=\{ g_j, g_j +1 , \ldots, h_j\}$ as above, we put
the base $b$ digit of $\alpha_{i,\ell}$ at any position $g_j+u-1\in I_j$ 
equal to the $u$-th base $b$
digit $a_{i,u}^{\ell}$ of $\theta_{i,\ell}$.
Define further for $i=0,1$ and $1\leq \ell\leq m$ the rational numbers
\[
r_{j}^{i,\ell}= \frac{ d_{i,g_j}^{\ell}b^{h_{j-1}} + \cdots+ d_{i,h_j}^{\ell}b^{h_j} }{b^{h_j}}- \frac{ a_{i,1}^{\ell}b^{h_{j-1}} + \cdots+ a_{i,|I_j|}^{\ell}b^{h_j} }{b^{h_j}}\in\mathbb{Q},\quad\; j\geq 1.
\]
Note that $r_{j}^{i,\ell}$ has common denominator $b^{h_j}$. 
To the numbers $\alpha_{i,\ell}$ we add the sum of
$r_j^{1-i,\ell}$ over each $j\equiv i\bmod 2$ to derive $x_{i,\ell}$, i.e.
\[
x_{i,\ell}= \alpha_{i,\ell}+ \sum_{j\geq 1,\\ j\equiv i\bmod 2} r_{j}^{1-i,\ell}=\alpha_{i,\ell}+ \sum_{j\geq 1,\\ j\not\equiv i\bmod 2} r_{j}^{i,\ell}, \qquad \quad i=0,1, \; 1\leq \ell\leq m.
\] 
If we write $\underline{x}_i=(x_{i,1},\ldots,x_{i,m})$, then by construction $\underline{x}_0+\underline{x}_1=\ux$.
Moreover, defining $Q$ and the induced $j$ as above, similarly as above 
the base $b$ digits of $b^{h_j}\underline{x}_i-\underline{p}_{i,j}$ 
for $\underline{p}_{i,j}=\lfloor b^{h_j}\underline{x}_i-\ut_i\rfloor$
in any coordinate vanish up to position $h_{j+1}-h_j$.
Thus we have
\[
\Vert b^{h_j}\underline{x}_i - \underline{p}_{i,j}- \ut_i\Vert \ll b^{-(h_{j+1}-h_j) }  \ll
b^{-(\nu_i-1) h_j }\ll Q^{-\frac{\nu_i-1}{ \Lambda }  },
\]
for $i\in\{0,1\}$, so again $\wo_{m,\ut_i}^{(b)}(\underline{x}_i)\geq (\nu_i-1)/\Lambda=y_i$ and thus, as above $\underline{x}_i\in \mathcal{S}_{m,\ut_i}^{(b)}(w_i)$ for any $w_i<y_i$. 
For the full measure measure claim we again apply Lemma~\ref{daslemma}.
Lemma~\ref{le} and thus the theorem are proved. We mention 
that the special case \eqref{eq:B} of Corollary~\ref{mko}
corresponds to $\nu_0=\nu_1=2$.

\subsection{Proof of Corollary~\ref{coro}}  \label{pko}

   Let $w=1/m+\varepsilon_0$ and
   $w^{\prime}=w+1-2\sqrt{w}=(\sqrt{m}-1)^2/m-\varepsilon_1$ for small $\varepsilon_0>0, \varepsilon_1>0$,
   so that identity \eqref{eq:idne}
   in Theorem~\ref{t3} holds.
	Now the extension~\cite[Theorem~1.5]{dfsu1} of
	Theorem~\ref{DFSU} implies that $w\to \dim_P(\mathcal{S}_m^{\ast}(w))$ is right-continuous
	at $w=1/m$. Hence, since the $\varepsilon_i$ can be chosen
	arbitrarily small, from the sets in \eqref{eq:fullm} having full measure and a rearrangement of \eqref{eq:thor} we get
	\begin{align*}
	\dim_H(\mathcal{S}_{m}^{(b)\ast}(w_1)) &\geq m - \dim_P(\mathcal{S}_m^{(b)\ast})\\ &\geq
	  m - \dim_P(\mathcal{S}_m)=
	m - (m-1+\frac{1}{m+1})= 1-\frac{1}{m+1},
	\end{align*}
	for any $w_1<w^{\prime}$, the claim of the lemma.

\subsection{Proof of Theorem~\ref{H}}

We modify the construction of the proof of Theorem~\ref{t3}.
Let $w^{\prime}>0$ and $\nu_0>1, \nu_1>1$ be real numbers to be 
chosen later related by the identity
\begin{equation}  \label{eq:satisfy}
w^{\prime}=\frac{\nu_1-1}{\nu_0\nu_1}.
\end{equation}
Take any $w<w^{\prime}$. We show that the set
\begin{equation} \label{eq:RITZ}
 \mathcal{S}_{m,\ut}^{(b)\ast}(w) + \mathcal{W}_m^{(b)}(\nu_0-1)
\end{equation}
has full $m$-dimensional Lebesgue measure (in fact removing the $\ast$
the sumset equals $\Rm$).
Provided this is true, from \eqref{eq:thor} and formula \eqref{eq:jarnik} we get
\begin{align*}
\dim_{P}( \mathcal{S}_{m,\ut}^{(b)\ast}(w) ) 
\geq \dim_H(\mathcal{S}_{m,\ut}^{(b)\ast}(w) + \mathcal{W}_m^{(b)}(\nu_0-1)) - \dim_H(\mathcal{W}_m^{(b)}(\nu_0-1)) \geq m- \frac{m}{\nu_0}.
\end{align*}
If we choose $\nu_1$ large enough then $\nu_0=(\nu_1-1)/(w^{\prime}\nu_1)$ will be arbitrarily close
to $w^{\prime -1}$, and since $w$ can be taken arbitrarily
close to $w^{\prime}$, the lower bound $m-mw$ follows for $w>0$.
Finally, if $w=0$, it follows from an obvious inclusion argument.

To show \eqref{eq:RITZ},
we again first restrict to the homogeneous case $\ut=\underline{0}$, 
and describe the generalization later.
Take an arbitrary 
vector $\ux=(\xi_1,\ldots,\xi_m)\in \mathbb{R}^{m}$. 
Consider the intervals $I_j$ partitioning the positive integers 
constructed as follows:
Set $g_0=1, h_0=M, g_1=M+1$ and for $j\geq 1$ let
\[
h_j= \lfloor \nu_i g_j\rfloor, \qquad g_{j+1}=h_j+1,
\]
where $i\in\{0,1\}$ is taken so that $j\equiv i\bmod 2$, i.e.
the same parity as $j$. Notice $h_j/g_j=\nu_i+o(1)$.
Now define $\underline{x}_0=(x_{0,1},\ldots,x_{0,m})$ as follows:
For $1\leq \ell\leq m$, let the digits of $x_{0,\ell}$ be $0$ 
in intervals $I_j$ with $j\equiv 0\bmod 2$, and equal to those of $\xi_{\ell}$
if $j\equiv 1 \bmod 2$.
Conversely, let the coordinates $x_{1,\ell}$ of $\underline{x}_1=(x_{1,1},\ldots,x_{1,m})$
have digit $0$ in intervals $I_j$ when $j\equiv 1\bmod 2$, and equal to those of $\xi_{\ell}$
if $j\equiv 0 \bmod 2$, for $1\leq \ell\leq m$.
Then clearly $\ux=\underline{x}_0+\underline{x}_1$.
To finish the proof of the homogeneous case, we need to show that 
\begin{equation}  \label{eq:OP}
\wo_m^{(b)}(\underline{x}_1) \geq w^{\prime}, \qquad \omega_m^{(b)}(\underline{x}_0)\geq \nu_0-1,
\end{equation}
as by definition $\underline{x}_0\in \mathcal{W}_m^{(b)}(\nu_0-1)$ and 
$\underline{x}_1\in \mathcal{S}_m^{(b)}(w)$ for any $w<w^{\prime}$.
By Lemma~\ref{daslemma} we still have $\underline{x}_1\in \mathcal{S}_m^{(b)\ast}(w)$ for any $w<w^{\prime}$ induced by almost all $\ux\in\Rm$ we started with.

We proceed similar to the proof of Theorem~\ref{t3}. Let $Q=b^{R}$ be large and $j$ be the largest 
even integer such that $b^{h_j}\leq Q$. Then $Q\ll b^{h_{j+2}}\ll b^{\Lambda h_j}$, where $\Lambda=\nu_0\nu_1$.
It is again not hard to see that again not before position $h_{j+1}-h_j$
non-zero digits of $b^{h_j}\underline{x}_1$ after the comma occur, hence writing $\underline{p}_{1,j}= \lfloor b^{h_j}\underline{x}_1\rfloor$, we infer
\[
\Vert b^{h_j}\underline{x}_1 - \underline{p}_{1,j}\Vert \ll b^{-(h_{j+1}-h_j) } \ll
b^{-(\nu_1-1) h_j }\ll Q^{-\frac{\nu_1-1}{ \Lambda  }}=Q^{-w^{\prime}}.
\]
This shows the left inequality in \eqref{eq:OP}. Similarly for odd $j$ with $\underline{p}_{0,j}= \lfloor b^{h_j}\underline{x}_0\rfloor$ we have
\[
\Vert b^{h_j}\underline{x}_0 - \underline{p}_{0,j}\Vert \ll b^{-(h_{j+1}-h_j) } \ll
b^{-(\nu_0-1) h_j }= (b^{h_j})^{-(\nu_0-1)}.
\]
This shows the right inequality in \eqref{eq:OP}.

Finally, if $\ut$ does not vanish, in the intervals $I_j$
with odd $j$, we follow with $\underline{x}_1$
the initial $b$-ary expansion of $\ut$ instead of letting 
the digits be $0$,
and then alter $\underline{x}_0$ accordingly in the same intervals 
to preserve $\underline{x}_0+\underline{x}_1=\ux$, very similarly
to the proof of Theorem~\ref{t3}. We conclude likewise that
$\underline{x}_0\in \mathcal{W}_m^{(b)}(\nu_0-1)$ and almost all $\ux\in\Rm$ induce $\underline{x}_1\in \mathcal{S}_{m,\ut}^{(b)\ast}(w)$,
thus the set in \eqref{eq:RITZ} has full measure in the general case.

\begin{remark}  \label{hirsch}
	Define $b$-ary sets 
	$\widetilde{\mathcal{S}}_{m,\ut}^{(b)\ast}(w)\subseteq \mathcal{S}_{m,\ut}^{(b)\ast}(w)$
	of exact singular order $w$ by
	\[
	\widetilde{\mathcal{S}}_{m,\ut}^{(b)\ast}(w)= \mathcal{S}_{m,\ut}^{(b)\ast}(w) \setminus \bigcup_{t>w} \mathcal{S}_{m,\ut}^{(b)\ast}(t)= \{ \ux\in \mathcal{S}_{m,\ut}^{(b)\ast}(w): \wo_{m,\ut}^{(b)\ast}(\ux)=w \}.
	\]
	It is not hard to show that the sumset 
	$\widetilde{\mathcal{S}}_{m,\ut}^{(b)\ast}(w) + \mathcal{W}_m^{(b)}(\nu_0-1)$ with $\nu_0$
	as in the proof above still contains a full measure
	subset of the set 
	$\widetilde{\mathcal{W}}_m^{(b)}(0)=\{ \ux\in\Rm: \om_m^{(b)}(\ux)=0 \}$ (and dropping the ''$\ast$'' the entire set, proceed
	similar to the proof of \cite[Theorem~3.1]{arxiv}). 
	The latter set has full $m$-dimensional
	Lebesgue measure as follows from more general results of Fraenkel~\cite{franky} (see
	also \eqref{eq:jarnik} obtained in~\cite{bf72}). Thus the claim of Theorem~\ref{H} for the smaller sets $\widetilde{\mathcal{S}}_{m,\ut}^{(b)\ast}(w)$
	follows from the same line of arguments in the proof above. 
	However, it seems
	considerably more difficult to get analogous results for the accordingly altered non-$b$-ary sets $\widetilde{\mathcal{S}}_{m,\ut}^{\ast}(w)$. Similarly, 
	other claims may be extended to exact order of approximation.
\end{remark}

\subsection{Proof of Theorem~\ref{simu}}

Let $\ux\in\Rm$ and $\Theta=\{ \ut_1,\ldots,\ut_k \}$ with $\ut_s\in\Rm$
for $1\leq s\leq k$ 
arbitrary and $\nu_0>1, \nu_1>1$ real numbers. 
For $i=0,1$ and $j\geq 1$ 
define again iteratively a partition of the positive integers
into intervals $I_j=[g_j,h_j]\cap \mathbb{Z}$ with $h_j= \lfloor\nu_i g_j\rfloor$
and $g_{j+1}=h_j+1$, 
where $i=1$ if $j\not\equiv 0\bmod k$ and $i=0$ 
if $j\equiv 0\bmod (k+1)$. 
Then assume in $I_j$ the real vector  
$\underline{x}_1$ follows the base $b$ representation
of $\ut_s$ where $j\equiv s\bmod k$ with $s\in\{ 1,2,\ldots,k\}$ 
for $j\not\equiv 0\bmod (k+1)$,
and the digits are those of $\ux$ in intervals 
with $j\equiv 0\bmod (k+1)$.   
Define $\underline{x}_0=\ux-\underline{x}_1$ and note
that it has $0$ base $b$ digits in $I_j$ for $j\equiv 0\bmod (k+1)$. Then for any $1\leq s\leq k$ and any large $Q_s$, 
let $j=j(s,Q_s)$ be the largest index that satisfies 
$j+1\equiv s\bmod (k+1)$ 
and $b^{h_{j}}\leq Q$. Then with $\Lambda=\nu_0\nu_1^k$
and $\underline{p}_{1,j}=\lfloor b^{h_j}\underline{x}_1\rfloor$,
we have $b^{h_j}\leq Q<b^{h_{j+k+1}}\ll b^{\Lambda h_j }$
and we check similar to the proof above that
\[
\Vert b^{h_j}\underline{x}_1-\underline{p}_{1,j}-\ut_s\Vert \ll
b^{-(h_{j+1}-h_j)} \ll b^{-(\nu_1-1)h_j}
\ll Q^{-\frac{\nu_1-1}{\Lambda}}. 
\]
Moreover, with 
$\underline{p}_{0,j}=\lfloor b^{h_j}\underline{x}_0\rfloor$,
for any $j\equiv -1\bmod (k+1)$ we have
\[
\Vert b^{h_j}\underline{x}_0-\underline{p}_{0,j}\Vert \ll
b^{-(h_{j+1}-h_j)} \ll (b^{h_j})^{-(\nu_0-1)}. 
\]
By construction
$\wo_m^{(b)}(\underline{x}_1,\ut_s)\geq (\nu_1-1)/\Lambda= w^{\prime}$
for any $s\in\{ 1,2,\ldots,k\}$. Thus 
we have $\underline{x}_1\in \cap_{\ut\in\Theta} \mathcal{S}_{m,\ut}^{(b)}(w)$ for any $w<w^{\prime}$, and by
Lemma~\ref{daslemma} still $\underline{x}_1\in \cap_{\ut\in\Theta} \mathcal{S}_{m,\ut}^{(b)\ast}(w)$ for almost all $\ux$ we started with.
Finally choosing $\nu_1=\frac{k}{k-1}$ and if we identify
\[
w^{\prime}=\frac{\nu_1-1}{\Lambda}
\]
we also have
\[
\om_m^{(b)}(\underline{x}_0)\geq \nu_0-1=\frac{\nu_1-1}{w^{\prime}\nu_1^k}-1=\frac{(k-1)^{k-1}}{ w^{\prime}k^k}-1>\frac{1}{kew^{\prime}}-1,
\] 
where we used the inequality
\[
(1-\frac{1}{k})^{k-1} > e^{-1}, \qquad k\geq 2 \quad\; \Longleftrightarrow \quad\;
(1+\frac{1}{k-1})^{k-1}<e, \qquad k\geq 2,
\]
% check >1/e part again!
a well-known fact. Hence if we write 
\[
A=\mathcal{W}_m^{(b)}((kew^{\prime})^{-1}-1),\qquad B=\bigcap_{\ut\in \Theta} \mathcal{S}_{m,\ut}^{(b)\ast}(w),
\]
then again $A+B$ has full $m$-dimensional Lebesgue measure for any $w<w^{\prime}$, and
by \eqref{eq:thor} and formula \eqref{eq:jarnik} we infer
\begin{align*}
m=\dim_H(A+B)\leq \dim_H(A)+\dim_P(B)\leq mkew^{\prime}+ \dim_P(B). 
\end{align*}
By a continuity argument again we may write $w$ instead of $w^{\prime}$, and subtracting $mkew$ from both sides the proof is finished.

\subsection{Proof of Theorem~\ref{heep} }

Let $h_j/g_j\to\infty$ and $g_{j+1}=h_j+1$ and define again the sequence
of neighboring intervals $I_j=[g_j,h_j]\cap \mathbb{Z}$
that partitions $\mathbb{N}$. Further take any map 
$\varphi: \mathbb{N}\to \mathbb{N}$ such that each $s\in\mathbb{N}$
has infinitely many preimages both among 
even and odd $j$, for example by $\{ \varphi(1), \varphi(2),\ldots \}=\{1,1,2,1,2,3,1,2,3,4,1,\ldots\}$. Then, for any $s\geq 1$, 
assume $\underline{x}_0$ follows the base $b$ 
representation of $\ut_s$ in an interval $I_j$ whenever $\varphi(j)=s$
and $j$ is even. Similarly, let $\underline{x}_1$ follow
when $j$ is odd and $\varphi(j)=s$.
For given $\ux\in\Rm$, define the digits in the remaining intervals 
so that $\underline{x}_0+\underline{x}_1=\ux$, very similar
to the proof of Theorem~\ref{t3}. Then
for arbitrarily large $N$, any $s\geq 1$ and $j\in \{ \varphi^{-1}(s)-1\}$ with $j$ large enough, for $p_{i,j}=\lfloor b^{h_j}\underline{x}_i \rfloor$ 
we have
\[
\Vert b^{h_j}\underline{x}_i-\underline{p}_{i,j}-\ut_s\Vert
\ll b^{-(h_{j+1}-h_j) }\ll (b^{h_j})^{-N}, \qquad i=0,1.
\]
Hence $\om_{m,\ut_s}^{(b)}(\underline{x}_i)=\infty$ for $i=0,1$ and all
$s\geq 1$. Thus
with $A=\cap_{\ut\in\Theta} \mathcal{W}_{m,\ut}^{(b)}(\infty)$ we have $A+A=\Rm$. We may without loss of generality assume $\underline{0}\in\Theta$
so that $\dim_H(A)=0$ by \eqref{eq:jarnik}.
Consequently from \eqref{eq:thor} we obtain
\[
m=\dim_H(\Rm)=\dim_H(A+A)\leq \dim_H(A)+ \dim_P(A)= \dim_P(A),
\]
hence $\dim_P(A)=m$.

\subsection{Proof of Theorem~\ref{reverse}}

The claim follows from combination of two results below.

\begin{proposition}  \label{prop}
	For any $\xi$ and $\theta_i$, $i=1,2$, as in the theorem we have
	\[
	\wo_{1,\theta_i}^{(3)}(\xi)\leq 1, \qquad \om_{1,\theta_i}^{(3)}(\xi) \geq \frac{\wo_{1,\theta_i}^{(3)}(\xi)}{1-\wo_{1,\theta_i}^{(3)}(\xi)},
	\]
	where we interpret $1/0=\infty$.
\end{proposition}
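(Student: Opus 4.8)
The plan is to reduce both inequalities to one arithmetic fact about the two Liouville numbers, and then run a transference argument over consecutive $b$-ary approximants; throughout I exclude the degenerate case in which $3^{N}\xi-p-\theta_i=0$ for some $N\ge 0$, $p\in\mathbb{Z}$ (there the exponents are controlled by that trivial solution), and for $y\in\mathbb{R}$ I write $\Vert y\Vert_{\mathbb{Z}}=\min_{p\in\mathbb{Z}}|y-p|$, so that $\min_{p}\Vert 3^{N}\xi-p-\theta\Vert=\Vert 3^{N}\xi-\theta\Vert_{\mathbb{Z}}$.

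First I would establish the crucial fact that there is $c_{0}=c_{0}(\theta_i)>0$ with $\Vert(3^{d}-1)\theta_i\Vert_{\mathbb{Z}}\ge c_{0}$ for every integer $d\ge 1$. For $\theta_{1}$ this holds because, with $M$ the largest index satisfying $M!\le d$, one has $3^{d}\theta_{1}\equiv\sum_{k>M}3^{\,d-k!}\pmod 1$, and since $d-(M+1)!\le -1$ while the factorials are very spread out, this fractional part stays below $3^{-1}+3^{-4}$, hence bounded away from $\theta_{1}=0.4458\ldots$; thus $\Vert(3^{d}-1)\theta_{1}\Vert_{\mathbb{Z}}=\theta_{1}-\{3^{d}\theta_{1}\}\ge c_{0}$, and an entirely analogous computation handles $\theta_{2}=2\theta_{1}$. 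This is the only genuinely $\theta$-specific input and I expect it to be the main obstacle; everything afterwards is soft. From it a transference lemma drops out: if $N<N'$, $\Vert 3^{N}\xi-\theta\Vert_{\mathbb{Z}}\le 3^{-T}$ and $\Vert 3^{N'}\xi-\theta\Vert_{\mathbb{Z}}\le 3^{-T'}$ with $T'\ge C_{0}:=\log_{3}(2/c_{0})$, then $N'\ge N+T-C_{0}$; indeed, multiplying the first inequality by $3^{N'-N}$ and subtracting the second gives $\Vert(3^{N'-N}-1)\theta\Vert_{\mathbb{Z}}\le 3^{-T'}+3^{\,N'-N-T}\le c_{0}/2+3^{\,N'-N-T}$, which together with the lower bound $c_{0}$ forces $3^{\,N'-N-T}\ge c_{0}/2$, i.e. $N'-N\ge T-C_{0}$.

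Next I would run the standard chain-of-best-denominators argument. Put $w=\wo_{1,\theta_i}^{(3)}(\xi)$; assume $w>0$ (otherwise there is nothing to prove) and fix $0<w'<w$. For each large $K$ the set of integers $N\ge 0$ with $3^{N}\le 3^{K}$ and $\Vert 3^{N}\xi-\theta\Vert_{\mathbb{Z}}\le 3^{-w'K}$ is nonempty; let $n(K)$ denote its least element. One checks $n(K)\to\infty$, since otherwise a fixed $N_{0}$ would satisfy $\Vert 3^{N_{0}}\xi-\theta\Vert_{\mathbb{Z}}\le 3^{-w'K}$ for arbitrarily large $K$, forcing $3^{N_{0}}\xi-\theta\in\mathbb{Z}$, which we excluded. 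Fix a large $K$, set $N=n(K)$ and $T=-\log_{3}\Vert 3^{N}\xi-\theta\Vert_{\mathbb{Z}}<\infty$, so $T\ge w'K\ge w'N$. The denominator $3^{N}$ meets the scale-$J$ requirement for every $J\le K^{\ast}:=\lfloor T/w'\rfloor$ (and $K^{\ast}\ge K$) but fails at $J=K^{\ast}+1$, so some $N'\le K^{\ast}+1$ with $N'\ne N$ meets it at scale $K^{\ast}+1$; since $N'\le K$ would contradict minimality of $n(K)$ while $N'>K\ge N$ otherwise, in either case $N'>N$. Applying the transference lemma with $T'=w'(K^{\ast}+1)$, which is large, yields $N+T-C_{0}\le N'\le T/w'+1$, whence $N\le T((w')^{-1}-1)+C_{0}+1$.

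Finally I would read off the two conclusions from this bound. If $w'>1$, then $(w')^{-1}-1<0$, so $n(K)\le C_{0}+1$ for all large $K$, contradicting $n(K)\to\infty$; hence no $w'>1$ is admissible and $\wo_{1,\theta_i}^{(3)}(\xi)\le 1$, the first assertion. If $w'<1$, the bound rearranges to $T/N\ge\tfrac{w'}{1-w'}\cdot\tfrac{N-C_{0}-1}{N}$; taking $q=3^{N}$ and $p$ the nearest integer to $3^{N}\xi-\theta$ gives $\Vert q\xi-p-\theta\Vert=q^{-T/N}$, and since $n(K)\to\infty$ supplies infinitely many such $N$ we obtain $\om_{1,\theta_i}^{(3)}(\xi)\ge\limsup_{K}T(n(K))/n(K)\ge\tfrac{w'}{1-w'}$; letting $w'\uparrow w$ and using the convention $1/0=\infty$ then gives $\om_{1,\theta_i}^{(3)}(\xi)\ge\tfrac{w}{1-w}$, the second assertion. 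The only real work is the uniform lower bound $\Vert(3^{d}-1)\theta_i\Vert_{\mathbb{Z}}\ge c_{0}$; the remainder is the same bookkeeping on consecutive $b$-ary approximants already used in the constructions of \S\ref{se2}.
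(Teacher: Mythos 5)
Your proof is correct, and it organizes the argument differently from the paper. The paper works directly with base-$3$ digits: given a good approximation $|3^t\xi - p_1 - \theta_1| < 3^{-t\tau}$, it observes that for $u$ slightly above $t$ the fractional part $\{3^u\xi\}$ is (approximately) a shift of the digit string of $\theta_1$, which cannot start with $1,1$ since $\theta_1$ has consecutive $1$s only at positions $1,2$; thus $|3^u\xi-p-\theta_1|\gg 1$ for $t<u\le\lfloor t(\tau+1)\rfloor$, while for $u\le t$ the definition of $\om$ supplies a lower bound of size $(3^t)^{-(\om+\varepsilon)}$, and combining at $Q=3^{\lfloor t(\tau+1)\rfloor}$ yields $\wo\le(\om+\varepsilon)/(\tau+1)$. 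You instead isolate the $\theta$-specific arithmetic into the single clean non-resonance inequality $\Vert(3^d-1)\theta_i\Vert_{\mathbb{Z}}\ge c_0$ (which encodes exactly the same digit observation — it says $\theta_1$ is uniformly far from its own $3^d$-shifts mod $1$) and then run a standard transference lemma plus a chain-of-best-$b$-ary-denominators argument, never invoking $\om$ until the very end when you read it off from $T(n(K))/n(K)$. Both are valid and comparable in length; your modular version isolates the only genuinely $\theta$-dependent fact in a reusable lemma and makes the logical structure (a Jarník-type transference inequality in the $b$-ary inhomogeneous setting) transparent, whereas the paper's version is slightly more direct. One small point worth making explicit: as you note, the statement as written literally fails in the trivial case $3^{N}\xi - p = \theta_i$, where $\wo = \om = \infty$; the paper's assertion that ``arbitrarily large integers $t$'' satisfy $|3^t\xi-p_1-\theta_1|<3^{-t\tau}$ also tacitly excludes this case (the defining condition for $\om$ only requires arbitrarily large $Q$, not arbitrarily large $q$), so both proofs rely on the same exclusion, which you state and the paper does not.
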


\begin{proof}
	We only show the claim for $i=1$, the other case is analogous. 
	We show the equivalent claim
	\[
	\wo_{1,\theta_1}^{(3)}(\xi) \leq \frac{\om_{1,\theta_1}^{(3)}(\xi)}{\om_{1,\theta_1}^{(3)}(\xi)+1}\leq 1.
	\]
	We may assume $\om_{1,\theta_1}^{(3)}(\xi)>0$, otherwise
	the claim is clear by the trivial estimate $\wo_{1,\theta_1}^{(3)}(\xi)\leq \om_{1,\theta_1}^{(3)}(\xi)$.
	By definition of $\om_{1,\theta_1}^{(3)}(\xi)$, for any $\tau\in (0,\om_{1,\theta_1}^{(3)}(\xi))$, 
	arbitrarily large integers $t$ and $p_1=p_1(t)$ we have
		\begin{equation}  \label{eq:tauin}
	|3^t\xi-p_1-\theta_1| < (3^t)^{-\tau}.
	\end{equation}
	By an easy digital argument,
	using that only at the beginning of the expansion of $\theta_1$ we have two consecutive base $3$ digits $1$,
	for $u\in (t,\lfloor t (\tau+1)\rfloor]$ we
	have $|3^{u}\xi-p_5-\theta_1|\gg 1$. 
	For $u\leq t$, arbitrarily small $\varepsilon>0$ and any integer $p_2$ we estimate
	\[
	|3^{u}\xi-p_2-\theta_1|\gg (3^{u})^{-(\om_{1,\theta_1}^{(3)}(\xi) +\varepsilon) }\geq 
	(3^{t})^{-(\om_{1,\theta_1}^{(3)}(\xi) +\varepsilon) }
	\]
	from the definition of $\om_{1,\theta_1}^{(3)}(\xi)$.
	Hence, the last estimate holds for any $u\leq \lfloor t (\tau+1)\rfloor$. Hence
	with $Q=3^{\lfloor t(\tau+1)\rfloor}$ we get
	\[
	\wo_{1,\theta_1}^{(3)}(\xi)\leq \frac{t(\om_{1,\theta_1}(\xi)+\varepsilon)}{t(\tau+1)}=
	\frac{\om_{1,\theta_1}(\xi)+\varepsilon}{\tau+1}.
	\]
	Now $\tau$ can be chosen arbitrarily 
	close to $\om_{1,\theta_1}^{(3)}(\xi)$ and $\varepsilon$ arbitrarily
	small and the claim follows.
\end{proof}

\begin{proposition}  \label{pp1}
	For $\theta_1, \theta_2$ as in the theorem and any  $\xi\in\mathbb{R}$ we have 
	\[
	\wo_{1,\theta_2}^{(3)}(\xi)\leq \frac{1}{\om_{1,\theta_1}^{(3)}(\xi)+1}\leq 1.
	\]
	%and similarly if $\om_1^{(3)}(\xi,\theta_2)>0$ then
	%	\[
	%\wo_1^{(3)}(\xi,\theta_1)\leq %\frac{1}{\om_2^{(3)}(\xi,\theta_2)+1}\leq 1.
	%\]
\end{proposition}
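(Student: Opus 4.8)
The statement to be proved, Proposition~\ref{pp1}, controls $\wo_{1,\theta_2}^{(3)}(\xi)$ by $\om_{1,\theta_1}^{(3)}(\xi)$, so the plan is to argue by contradiction: assume that $\xi$ admits very good \emph{ordinary} $3$-ary inhomogeneous approximation to $\theta_1$ and, simultaneously, good \emph{uniform} $3$-ary inhomogeneous approximation to $\theta_2=2\theta_1$, and derive an impossible coincidence. Write $\|x\|$ for the distance from $x\in\mathbb{R}$ to the nearest integer, and put $\omega=\om_{1,\theta_1}^{(3)}(\xi)$, $\hat\omega=\wo_{1,\theta_2}^{(3)}(\xi)$; the goal is $\hat\omega(\omega+1)\le1$. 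If $\omega=0$ this is exactly the inequality $\hat\omega\le1$ already supplied by Proposition~\ref{prop}, and if $\hat\omega=0$ nothing is to be shown; so I would assume $\hat\omega>0$ and $\hat\omega(\omega+1)>1$, and fix reals $0<\rho<\omega$ and $0<w<\hat\omega$ with $w(\rho+1)>1$ (possible by continuity, taking $\rho$ large if $\omega=\infty$).

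The first ingredient, and the only place the special shape of $\theta_1,\theta_2$ enters, is two elementary digital facts. Since $\theta_1=\sum_{k\ge1}3^{-k!}$ has all base-$3$ digits in $\{0,1\}$ and, because apart from $k=1,2$ consecutive factorials differ by more than $1$, no carries occur in forming $\theta_2=2\theta_1$, so that $\theta_2$ has all base-$3$ digits in $\{0,2\}$, inspection of initial digits yields fixed constants $\kappa_0,\kappa_1>0$ with
\[ \|\{3^s\theta_1\}-\theta_2\|\ge\kappa_0\ \ (s\ge0),\qquad \|\theta_1-3^s\theta_2\|\ge\kappa_1\ \ (s\ge1). \]
Indeed $\{3^s\theta_1\}<\tfrac12$ while $\theta_2$ lies just above $\tfrac89$, which gives the first bound; and for $s\ge1$ the number $\{3^s\theta_2\}$ either begins with digit $0$, hence lies below $\tfrac14<\tfrac49<\theta_1$, or begins with digit $2$ but not $2,2$, hence lies in a small interval above $\tfrac23$ — in either case a fixed distance from $\theta_1\approx0.444$ modulo $1$ — which gives the second.

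The second ingredient is a shift-and-push-back step. Fix an integer $C$ with $3^{-C}\le\kappa_0/2$. Since $\omega>\rho$, there are arbitrarily large integers $t$ with $\|3^t\xi-\theta_1\|<3^{-\rho t}$; fix such a $t$, large, and set $M=\lfloor(1+\rho)t\rfloor-C$ and $Q=3^M$. Multiplying the inequality at $t$ by $3^{s}$ shows that for every integer $N=t+s$ with $t\le N\le M$ one has $\|3^{N}\xi-\{3^{s}\theta_1\}\|<3^{s-\rho t}\le3^{-C}\le\kappa_0/2$, whence $\|3^{N}\xi-\theta_2\|\ge\kappa_0/2$ by the first digital fact. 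On the other hand $\hat\omega>w$ forces, for $t$ (hence $Q$) large, some exponent $N_0\le M$ with $\|3^{N_0}\xi-\theta_2\|\le Q^{-w}=3^{-wM}<\kappa_0/2$; comparing, $N_0$ cannot lie in $[t,M]$, so $N_0\le t-1$ and $s'':=t-N_0\ge1$. Multiplying this last inequality back up to exponent $t$ (and using $s''\le t$) gives $\|3^{t}\xi-\{3^{s''}\theta_2\}\|\le3^{t-wM}$, and since $w(\rho+1)>1$ the exponent $t-wM$ tends to $-\infty$ with $t$. Hence
\[ \|\theta_1-3^{s''}\theta_2\|\le\|3^{t}\xi-\theta_1\|+\|3^{t}\xi-\{3^{s''}\theta_2\}\|<3^{-\rho t}+3^{t-wM}\longrightarrow 0, \]
which for $t$ large contradicts $\|\theta_1-3^{s''}\theta_2\|\ge\kappa_1$ from the second digital fact. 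This rules out $\hat\omega(\omega+1)>1$ and proves the proposition; combined with Proposition~\ref{prop} it yields $\wo_{1,\theta_1}^{(3)}(\xi)+\wo_{1,\theta_2}^{(3)}(\xi)\le1$, i.e.\ Theorem~\ref{reverse}.

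I expect the delicate point to be the calibration in the push-back step: $M$ must be taken of order $(1+\rho)t$ precisely so that the uniform hypothesis, applied at $Q=3^M$, is \emph{compelled} to produce an approximation to $\theta_2$ whose exponent $N_0$ is barred from the \emph{entire} window $[t,M]$ (this uses the first digital fact along the orbit $3^{t}\xi,3^{t+1}\xi,\dots$), and is therefore forced below $t$; only then does multiplying that approximation back up to time $t$ manufacture a simultaneous near-equality of $\theta_1$ with a forward shift $3^{s''}\theta_2$, which the second digital fact makes impossible. The verification that $\kappa_0,\kappa_1>0$, the elementary digit bookkeeping, and the final limiting argument are all routine.
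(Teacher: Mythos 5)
Your proof is correct and takes essentially the same approach as the paper: you use the good ordinary $\theta_1$-approximation at time $t$ together with the digit-pattern incompatibilities (your two "digital facts") to rule out good $\theta_2$-approximations for $N\in[t,M]$ and for $N<t$ separately, which is exactly the mechanism in the paper's proof. The only difference is organizational — you argue by contradiction and make the constants $\kappa_0,\kappa_1$ explicit, whereas the paper phrases the $u<t$ case informally as "for similar reasons" — but the underlying argument is identical.
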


\begin{proof}
	We may assume $\om_{1,\theta_1}(\xi)>0$ as otherwise the claim
	follows from Proposition~\ref{prop}.
	By definition, for any $\tau\in (0,\om_{1,\theta_1}^{(3)}(\xi))$ 
	and arbitrarily large integers $t, p_1=p_1(t)$ we have
    \eqref{eq:tauin}.
	Then the base $3$ digits of $\xi$ at 
	places $t+1,t+2,\ldots, \lfloor t(\tau+1)\rfloor$ are the
	same as the initial digit sequence of $\theta_1$, which
	has digit $1$ at places of the form $N!$ and $0$ otherwise.
	Thus, since $\theta_2$ starts with very different
	digits $(0.220)_{3}$ in base $3$,
	it is easy to see that for any integer $u\in [t,\lfloor t (\tau+1)\rfloor]$ and any integer $p_2$ we have
	\[
	|3^u\xi-p_2-\theta_2|\gg 1.
	\]
	For similar reasons, if $u<t$ then for any integer $p_3$ we have 
	\[
	|3^u\xi-p_3-\theta_2|\gg 3^{u-t}.
	\]
	Hence for $u\leq \lfloor t(\tau+1)\rfloor$ we have $\vert 3^u\xi-p_4-\theta_2|\gg 3^{u-t}\gg 3^{-t}$,
	thus considering again $Q=3^{\lfloor t(\tau+1)\rfloor}$ we get $\wo_{1,\theta_2}^{(3)}(\xi)\leq \frac{t}{t(\tau+1)}=\frac{1}{\tau+1}$. 
	The claim follows as $\tau$ can be chosen arbitrarily 
	close to $\om_{1,\theta_1}^{(3)}(\xi)$.
	\end{proof}

We deduce the first claim of the theorem by combining the propositions via
\[
\wo_{1,\theta_2}^{(3)}(\xi)\leq \frac{1}{\om_{1,\theta_1}^{(3)}(\xi)+1}\leq \frac{1}{\frac{\wo_{1,\theta_1}^{(3)}(\xi)}{1-\wo_{1,\theta_1}^{(3)}(\xi)}+1}= 1-\wo_{1,\theta_1}^{(3)}(\xi).
\]
%If $\wo_{1,\theta_1}^{(3)}(\xi)\geq 1$ we trivially estimate
%$\om_{1,\theta_1}^{(3)}(\xi)\geq \wo_{1,\theta_1}^{(3)}(\xi)$
%to get from Proposition~\ref{pp1}
%\[
%\wo_{1,\theta_2}^{(3)}(\xi)\leq \frac{1}{\om_{1,\theta_1}^{(3)}(\xi)+1}
%\leq \frac{1}{\wo_{1,\theta_1}^{(3)}(\xi)+1}\leq \frac{1}{ 1+1} = %\frac{1}{2}.
%\]
The latter claim of the theorem
follows since by the first not both $\wo_{1,\theta_i}^{(3)}(\xi)$
can exceed $1/2$.

\subsection{Proof of Theorem~\ref{cantor}}

Additionally to Lemma~\ref{daslemma},
we further require upper bounds on the Hausdorff dimension
of the set of $b$-ary, ordinary $\tau$-approximable vectors in $K$.
A supposedly sharp bound reads as follows.

\begin{lemma}  \label{lemus}
	Let $m\geq 1$ be an integer and $K$ as in \eqref{eq:wiro}. We have
	\[
	\dim_H(\mathcal{W}_m^{(b)}(\tau)\cap K) \leq \frac{\dim(K)}{\tau+1}= \frac{\sum_{i=1}^{m} \log |W_i|}{(\tau+1)\log b}, \qquad \tau\in[0,\infty].
	\]
\end{lemma}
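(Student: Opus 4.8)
The plan is a covering argument of Jarn\'ik--Besicovitch type, adapted to the fractal $K$; it generalises the Borosh--Fraenkel bound from \eqref{eq:jarnik} (the case $W_i=\{0,1,\ldots,b-1\}$, i.e. $K=[0,1]^m$ and $\dim(K)=m$). We may assume $0<\tau<\infty$: for $\tau=0$ the set is all of $K$ and $\dim_H(K)=\dim(K)$ by \eqref{eq:kpro}, while $\tau=\infty$ follows from the finite case by letting $w\to\infty$ below. Fix an auxiliary exponent $w\in(0,\tau)$. Since $\om_m^{(b)}(\ux)$ is defined as a supremum and the condition in \eqref{eq:F} (with $q=b^N$) is monotone in the exponent, every $\ux\in\mathcal{W}_m^{(b)}(\tau)$ satisfies, for arbitrarily large $N$, that $\Vert b^N\ux-\underline{p}\Vert\le b^{-Nw}$ for some $\underline{p}\in\mathbb{Z}^m$. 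Hence
\[
\mathcal{W}_m^{(b)}(\tau)\cap K\ \subseteq\ \Bigl(\limsup_{N\to\infty} E_N\Bigr)\cap K,\qquad E_N:=\{\ux\in\Rm:\ \Vert b^N\ux-\underline{p}\Vert\le b^{-Nw}\ \text{for some}\ \underline{p}\in\mathbb{Z}^m\},
\]
so it suffices to show $\dim_H((\limsup E_N)\cap K)\le \dim(K)/(w+1)$ and then let $w\uparrow\tau$.

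Next I would bound, for each $N$, the number of sets of diameter $\asymp b^{-N(w+1)}$ needed to cover $E_N\cap K$. If $\ux=(\xi_1,\ldots,\xi_m)\in E_N\cap K$, then for each $1\le i\le m$ there is an integer $p_i$ with $|\xi_i-p_i/b^N|\le b^{-N(w+1)}$, and moreover $\xi_i\in C_{b,W_i}$. The crucial point is that membership in $C_{b,W_i}$ sharply limits the admissible $p_i$: the set $C_{b,W_i}$ is contained in the union of its $|W_i|^N$ level-$N$ cylinder intervals, each of length $b^{-N}$, and since $b^{-N(w+1)}\le b^{-N}$, the point $p_i/b^N$ must lie within distance $b^{-N}$ of one of those cylinders; writing a cylinder's left endpoint as $k/b^N$ with $k$ an integer whose length-$N$ base-$b$ expansion has digits in $W_i$, this forces $p_i$ to be one of these $k$ or one of their two nearest integer neighbours (the neighbours absorbing the straddling of cylinder boundaries). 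Thus there are at most $4|W_i|^N$ admissible values of $p_i$, hence at most $4^m\prod_{i=1}^m|W_i|^N=4^m\,b^{N\dim(K)}$ admissible tuples $\underline{p}$, the last equality by \eqref{eq:kpro}. For each admissible $\underline{p}$ the corresponding part of $E_N\cap K$ lies in the single box $\prod_i[p_i/b^N-b^{-N(w+1)},\,p_i/b^N+b^{-N(w+1)}]$ of diameter $O(b^{-N(w+1)})$, which I would use as one covering set.

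Consequently $E_N\cap K$ is covered by $O(b^{N\dim(K)})$ sets of diameter $O(b^{-N(w+1)})$, and for any $s>\dim(K)/(w+1)$ the tails
\[
\sum_{N\ge N_0} b^{N\dim(K)}\bigl(b^{-N(w+1)}\bigr)^{s}=\sum_{N\ge N_0} b^{N(\dim(K)-(w+1)s)}
\]
tend to $0$ as $N_0\to\infty$, so $\mathcal{H}^s\bigl((\limsup E_N)\cap K\bigr)=0$ and therefore $\dim_H((\limsup E_N)\cap K)\le s$. Letting $s\downarrow\dim(K)/(w+1)$ and then $w\uparrow\tau$ gives the stated bound, and the displayed equality is again \eqref{eq:kpro}. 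The only genuinely delicate step is the counting of admissible $p_i$ in terms of the level-$N$ cylinders of $C_{b,W_i}$ (and the attendant need to pass to the auxiliary exponent $w<\tau$ because one uses $b^{-N(w+1)}\le b^{-N}$); once that is in place the rest is the routine convergence computation above. Note that no hypothesis $0\in W_i$ is needed here.
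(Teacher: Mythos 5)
Your proof is correct, and it is precisely the ``very standard Borel--Cantelli covering approach'' that the paper invokes (and declines to write out), mimicking the $m=1$, $K=C_{3,\{0,2\}}$ argument in Levesley--Salp--Velani. The key quantitative step --- that membership of $\xi_i$ in $C_{b,W_i}$ restricts the admissible numerators $p_i$ to $O(|W_i|^N)$ values, so that $E_N\cap K$ is covered by $O(b^{N\dim(K)})$ boxes of side $\asymp b^{-N(w+1)}$ --- is exactly what makes the cover efficient on the fractal rather than on all of $[0,1]^m$, and your handling of the endpoints $\tau\in\{0,\infty\}$ and the passage from the auxiliary $w<\tau$ to $\tau$ is routine and sound. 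Your closing remark that $0\in W_i$ is not needed for this lemma is also accurate and worth having made explicit, since the adjacent Theorem~\ref{cantor} does impose that hypothesis for other reasons.
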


The proof of Lemma~\ref{lemus} follows from a very standard Borel-Cantelli
covering approach, mimicking the short proof of the 
special case $m=1$, $K=C_{3,\{0,2\}}$
in~\cite[\S~5]{lsv}. We leave the details to the reader.

\begin{remark}
		Lemma~\ref{lemus} can alternatively proved with Proposition~\ref{falke} below
	from~\cite{falconer}.
	Showing equality in Lemma~\ref{lemus} is a considerably harder task. It was proved rigorously for $m=1$
	and $K=C_{3,\{0,2\}}$ the Cantor middle third set by Levesley, Salp, Velani~\cite{lsv}. The right identity in \eqref{eq:jarnik} is 
	a special case 
	when we let $W=\{ 0,1,\ldots,b-1\}$. 
\end{remark}

Now we prove Theorem~\ref{cantor}.
Assume first $0\in W_h$ for all $1\leq h\leq m$. 
Starting with $\ux\in K$, then for $\ut=\underline{0}$ 
the contructions in Theorem~\ref{H} and Theorem~\ref{t3} obviously
lead to $\underline{x}_i\in K$, $i=0,1$, as well. Thus, following the proof of Theorem~\ref{H} and letting
\[
A(w)= \mathcal{S}_{m}^{(b)}(w) \cap K, \qquad B= \mathcal{W}_m^{(b)}(\nu_0-1) \cap K,
\]
with the same assumptions and notation,
we get the inclusion
\begin{equation}  \label{eq:EE}
A(w)+B\supseteq K.
\end{equation}
We next show that for 
$A^{\ast}(w)=A(w)\cap \Rr$ the restriction of $A(w)$ to $\Rr$, the set
$(A^{\ast}(w)+B)\cap K$
still has full Cantor measure, in particular
 \begin{equation} \label{eq:FF}
 \dim_H((A^{\ast}(w)+B)\cap K)= \dim(K).
 \end{equation}
This is immediate from \eqref{eq:EE}
and Lemma~\ref{daslemma}
with $\mathcal{A}$ the union of the $I_j$ over $j\not\equiv i\bmod 2$,
as it precisely claims that for almost all $\ux\in K$ we started with 
the constructed $\underline{x}_i$ do not lie in a rational
affine hyperplane.
Combining \eqref{eq:FF} with \eqref{eq:thor} and \eqref{eq:kpro}
we get
\begin{align*}
\dim_P(\mathcal{S}_m^{(b)\ast}(w)) &\geq \dim_H( A^{\ast}(w)+ B)- \dim_H(B)\\
&= \dim(K)-\dim_H(B)= \frac{\sum_{h=1}^{m} \log |W_h|}{\log b}-\dim_H(B).
\end{align*}
An upper bound for $\dim_H(B)$ comes from Lemma~\ref{lemus}. 
Our claim \eqref{eq:csr} and thus also \eqref{eq:habicht}
in (i) follow directly with
identification $w^{-1}=\nu_0-1$ and letting $\tau=\nu_0-1$.
As similar argument based on 
the proof of Theorem~\ref{t3} again yields
the inclusion in (ii), and with Lemma~\ref{daslemma} we get the
metrical claim in (ii). 
Finally, if $0\notin W_h$ for some $h$, we reduce \eqref{eq:nonedda} 
to the case $0\in W_h$ in \eqref{eq:csr}
via applying the rational shift $t\to t- \min W_h/(b-1)$ in coordinate $h$. It maps general $C_{b,W}$ into some 
$C_{b,W^{\prime}}$ with $0\in W^{\prime}$ and $|W^{\prime}|=|W|$.
Moreover, it both keeps the
property of belonging to some set $\mathcal{S}_{m}^{\ast}(w)$ 
invariant and, as a bi-Lipschitz map, preserves the Hausdorff
and packing dimensions. The proof is finished.

%\subsection{Proof of Corollary~\ref{mitkhalil} }
%
%For $w\in(1/2,1)$ and $w^{\prime}=(1-\sqrt{w})^2$ as in %\eqref{eq:idne}, for given $w_0<w$ and $w_1<w^{\prime}$ let
%\[
%A=A(w_0)= \mathcal{S}_m(w_0)\cap K, \qquad B= \mathcal{S}_m(w_1)\cap K.
%\]
%From~\cite[Corollary~1.4]{khalil}, Tricot's estimate \eqref{eq:tricot} %and (ii) of Theorem~\ref{cantor},
%\[
%\dim_P(A)\geq \dim_H(A\times B) - \dim_H(B)\geq %\dim(K)-\frac{2\dim(K)}{3} = \frac{\dim(K)}{3},
%\]
%for any $w_0<w^{\prime}$. Taking $w_1>1/2$ very close to $1/2$,  by a %continuity argument we get $w^{\prime}$ and thus $w_0$ arbitrarily %close to $(3-2\sqrt{2})/2$.

\subsection{Proof of Theorem~\ref{ax3} }

We use~\cite[Example~4.6]{falconer}.

\begin{proposition}[Falconer]  \label{falke}
	Let $[0,1] = E_0 \supseteq E_1 \supseteq E_2 \supseteq \cdots$
	be a decreasing sequence of sets, with each $E_k$ a union of a finite number of
	disjoint closed intervals (called $k$-th level basic intervals), with each interval of
	$E_{k-1}$ containing $m_{k}\geq 2$ intervals of $E_{k}$, 
	which are separated by gaps of length at least $\epsilon_k$,
	with $0<\epsilon_{k+1}<\epsilon_{k}$ for each $k$,
	which tend to $0$ as $k\to\infty$. Then the set
	\[
	F= \bigcap_{i\geq 1} E_i
	\]
	satisfies 
	\[
	\dim_H(F)\geq \liminf_{k\to\infty} \frac{ \log(m_1m_2 \ldots m_{k-1})}{-\log m_k \epsilon_k}.
	\]
\end{proposition}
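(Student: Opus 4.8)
The plan is the mass distribution principle. We may assume the $\liminf$ in the statement, call it $L$, is positive, since otherwise there is nothing to prove; also the hypotheses force $(m_1\cdots m_k)\epsilon_k\le 2$ (each level-$(k-1)$ interval has length at least $(m_k-1)\epsilon_k$, and the $m_1\cdots m_{k-1}$ of them are disjoint in $[0,1]$), so in particular $\epsilon_k\to0$, $m_k\epsilon_k\to0$, and $L\le1$. On $F$ one defines the natural mass distribution $\mu$: by the usual consistent-masses construction (see \cite{falconer}) there is a Borel probability measure $\mu$, supported on $F$, with $\mu(I)=(m_1m_2\cdots m_k)^{-1}$ for every $k$-th level basic interval $I$. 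The goal is then to show that for each $s<L$ there is a constant $C=C(s)$ with $\mu(U)\le C|U|^s$ for all intervals $U$ of sufficiently small length; the mass distribution principle gives $\mathcal H^s(F)\ge \mu(F)/C>0$, hence $\dim_H F\ge s$, and letting $s\uparrow L$ finishes the proof.

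So fix $s$ with $0<s<L\,(\le 1)$. By definition of $L$ there is $k_0\ge 2$ such that $m_k\epsilon_k<1$ and $(m_1\cdots m_{k-1})(m_k\epsilon_k)^s\ge 1$ for all $k\ge k_0$. Let $U$ be an interval with $0<|U|<\epsilon_{k_0-1}$ and let $k\ge k_0$ be the unique index with $\epsilon_k\le|U|<\epsilon_{k-1}$. Any two distinct $(k-1)$-th level basic intervals are separated by a gap of length at least $\epsilon_{k-1}$ — within a common parent by hypothesis, and across parents because the parents are themselves separated by at least $\epsilon_{k-2}\ge\epsilon_{k-1}$ — so, as $|U|<\epsilon_{k-1}$, the interval $U$ meets at most one $(k-1)$-th level interval, hence at most $m_k$ of the $k$-th level intervals. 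On the other hand consecutive $k$-th level intervals inside that parent are separated by gaps of length at least $\epsilon_k$, so $U$ meets at most $\lfloor|U|/\epsilon_k\rfloor+1\le 2|U|/\epsilon_k$ of them (here $|U|\ge\epsilon_k$). Since each $k$-th level interval carries $\mu$-mass $(m_1\cdots m_k)^{-1}$,
\[
\mu(U)\ \le\ \min\!\Bigl(m_k,\ \tfrac{2|U|}{\epsilon_k}\Bigr)\cdot\frac{1}{m_1\cdots m_k}.
\]

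To conclude, use the elementary inequality $\min(a,b)\le a^{1-s}b^{s}$, valid for all $a,b>0$ and $0\le s\le 1$, with $a=m_k$ and $b=2|U|/\epsilon_k$:
\[
\mu(U)\ \le\ \frac{m_k^{1-s}\,(2|U|/\epsilon_k)^{s}}{m_1\cdots m_k}\ =\ \frac{(2|U|)^{s}}{(m_1\cdots m_{k-1})\,(m_k\epsilon_k)^{s}}\ \le\ 2^{s}\,|U|^{s},
\]
the last step by the choice of $k_0$. Enlarging $C$ to absorb the finitely many intervals with $|U|\ge\epsilon_{k_0-1}$, we obtain $\mu(U)\le C|U|^s$ for every interval $U$ (and hence, up to a bounded factor, every set) of small diameter, as required. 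There is no genuinely hard step here; the only points needing care are the clean construction of $\mu$ together with the invocation of the mass distribution principle, and the bookkeeping that makes the two crude counts (at most $m_k$ children of a single parent, at most $2|U|/\epsilon_k$ by spacing) interpolate to the exponent $L$ rather than a weaker one. That interpolation is exactly where the hypothesis that the gaps dominate the prescribed decreasing sequence $(\epsilon_k)$ is used — both to confine $U$ to a single $(k-1)$-th level interval and to bound how many $k$-th level intervals it can straddle.
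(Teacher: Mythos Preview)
The paper does not give its own proof of this proposition; it merely quotes it as \cite[Example~4.6]{falconer}. Your argument via the mass distribution principle is correct and is essentially the standard proof found in Falconer's book.
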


Let $A\asymp B$ denote $A\ll B\ll A$.
Let $H_n=b^{h_{2n}}$ with $h_j$ as in the notation of the proof
of Theorem~\ref{H}. As in the proof of the homogeneous case
of Theorem~\ref{H},
we consider the set $\mathscr{Y}$, corresponding to coordinates of $\underline{x}_1$,
of real numbers where we can choose the base $b$ digits freely
in intervals $I_{2n}=[h_{2n}, \nu_0 h_{2n}]$, and take digit $0$ in the remaining
intervals of the form $I_{2n+1}=[\nu_0 h_{2n}, h_{2n+2}]$, where
$h_{2n+1}\asymp \nu_0 h_{2n}$ and $h_{2n+2}\asymp \nu_0\nu_1 h_{2n}$.
It fits the Cantor type interval construction of
Proposition~\ref{falke} with parameters
\[
m_k\asymp H_k^{\nu_0-1}, \qquad \epsilon_k\asymp H_k^{-\nu_0}.
\]
It yields after a short calculation via the geometric sum formula that
\[
\dim_H(\mathscr{Y})\geq \liminf_{k\to\infty} 
\frac{ (\nu_0-1)\sum_{j=1}^{k-1} \frac{1}{(\nu_0 \nu_1)^j} h_k }{ h_k } = 
\frac{\nu_0-1}{\nu_0\nu_1-1}.
\]    
One may compare the method with the proof of~\cite[Theorem~2.3]{arxiv3} for more details.
On the other hand,
if $\nu_0, \nu_1$ are related by identity \eqref{eq:satisfy},
then the proof of Theorem~\ref{H} shows
$\mathcal{S}_m^{(b)}(w)\supseteq \mathscr{Y}^m=\mathscr{Y}\times \cdots\times \mathscr{Y}$, and with repeated application of \eqref{eq:kproS} and the aid of Lemma~\ref{daslemma} 
still $\dim_H(\mathcal{S}_m^{(b)\ast}(w))\geq m\dim_H(\mathscr{Y})$. 
Hence $\dim_H(\mathcal{S}_m^{(b)\ast}(w))\geq m\dim_H(\mathscr{Y})\geq m(\nu_0-1)/(\nu_0\nu_1-1)$, where we used that $w^{\prime}$ is arbitrarily close to $w$. 
Upon \eqref{eq:satisfy}, which is basically $\nu_1= (1-\nu_0w)^{-1}$, we maximize this with choice of parameters
\[
\nu_0= \frac{2}{w+1},\qquad \nu_1=\frac{1+w}{1-w}, 
\]
inserting we derive the claimed bound. The extension to the 
general inhomogeneous case
is obtained by very similar ideas as in the proof of Theorem~\ref{H} again.

\subsection{Proof of Theorem~\ref{khr}}

With notation from the proof of Theorem~\ref{ax3}
and Theorem~\ref{khr}, the according sets $\mathscr{Y}_i\subseteq \mathbb{R}$, $1\leq i\leq m$,
corresponding to the coordinates for $\underline{x}_1$ constructed in the proof of Theorem~\ref{cantor} are again as in~Proposition~\ref{falke} with parameters
\[
m_k\asymp H_k^{(\nu_0-1)d_i}, \qquad \epsilon_k\asymp H_k^{-\nu_0}.
\]
Proceeding as in the proof of Theorem~\ref{ax3}
leads to a lower bound of the form
\[
\dim_H(\mathcal{S}_K^{(b)\ast}(w))\geq \sum_{i=1}^{m} \dim_H(\mathscr{Y}_i)\geq  \sum_{i=1}^{m} d_i \cdot 
\frac{ -w\nu_0^2 + (w+1)\nu_0 -1 }{ (w+1)(1-d_i) \nu_0^2 + ((w+2)d_i -1) \nu_0 - d_i }.
\]
The first claim follows when identifying $\nu_0$ with $t$.
If $K=C_{3,\{0,2\}}\times C_{3,\{0,2\}}$, inserting $m=2, d_1=d_2=\log 2/\log 3, w=1/2$ and choosing the optimal parameter $\nu_0=1.2994\ldots$,
we calculate a bound larger than $0.1255$. Finally 
as in the proof of Theorem~\ref{cantor} we can always reduce the problem to the case $0\in W_i$ by a rational transformation to infer
the same estimates for the non-$b$-ary sets $\mathcal{S}_K^{\ast}(w)$.

\subsection{Preparation for topological results}

The proof of Theorem~\ref{T2} relies on  
an observation that is essentially due to Erd\H{o}s~\cite{erdos}.

\begin{lemma}[Erd\H{o}s]  \label{erd}
	If $A,B\subseteq \Rm$ are comeagre, then $A+B=A\cdot B=\Rm$.
\end{lemma}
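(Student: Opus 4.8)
The plan is to run the classical Pettis-type argument underlying Erd\H{o}s' observation~\cite{erdos}, which rests entirely on the Baire category theorem. Throughout I would use three standard facts (see e.g.~\cite{oxtoby}), without reproving them: $\Rm$ is a Baire space; the intersection of two comeagre subsets of a Baire space is again comeagre, in particular nonempty and dense; and the image of a comeagre set under a self-homeomorphism of the ambient space is comeagre.

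For the additive assertion, fix an arbitrary $\ux\in\Rm$. The reflected translate $\ux-B=\{\ux-b:b\in B\}$ is the image of the comeagre set $B$ under the self-homeomorphism $y\mapsto\ux-y$ of $\Rm$, hence comeagre; since $A$ is comeagre as well, $A\cap(\ux-B)\neq\emptyset$, and any point $a$ of this intersection yields $a\in A$, $\ux-a\in B$, and $\ux=a+(\ux-a)\in A+B$. As $\ux$ was arbitrary, $A+B=\Rm$; running the same line with $B$ replaced by the comeagre set $-B$ gives $A-B=\Rm$.

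For the multiplicative assertions I would carry out the identical scheme inside the topological group $G=(\R\setminus\{0\})^m$ under coordinatewise multiplication. Since $G$ is open and dense in $\Rm$ it is itself a Baire space, and $A\cap G$, $B\cap G$ are comeagre in $G$. Given a target $\ux\in G$, the map $\iota_{\ux}\colon G\to G$, $y\mapsto(\xi_1/y_1,\ldots,\xi_m/y_m)$, is a self-homeomorphism of $G$, so $\iota_{\ux}(B\cap G)$ is comeagre in $G$ and meets $A\cap G$; a point $a$ of the intersection satisfies $a\in A$ and $(\xi_i/a_i)_i\in B$, whence $\ux\in A\cdot B$. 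This yields $A\cdot B\supseteq(\R\setminus\{0\})^m$, and replacing $B$ by its coordinatewise reciprocal (again comeagre in $G$) gives $A/B\supseteq(\R\setminus\{0\})^m$ in the same way.

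The step I expect to require the most care is reaching, in the multiplicative case, a target $\ux$ lying on a coordinate hyperplane, say with $\xi_i=0$ for $i$ in a nonempty set $S$: one then needs $a\in A$, $b\in B$ with $a_i=0$ or $b_i=0$ for each $i\in S$, whereas a comeagre set need not meet a prescribed coordinate hyperplane at all (for instance $A=B=(\R\setminus\{0\})^m$). I would dispose of this by exploiting the additional structure of the sets to which the lemma is applied — e.g. that they are comeagre and additionally have non-meagre slices in the relevant coordinate directions, so that after fixing suitable vanishing coordinates one finishes with the same Baire-intersection argument in the surviving factors — and I would note in any case that $A\cdot B$ and $A/B$ already contain the comeagre, full-measure set $(\R\setminus\{0\})^m$, which is what the applications in \S\ref{se3} actually use.
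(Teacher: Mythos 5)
Your additive argument is essentially the paper's proof verbatim: push $B$ through the self-homeomorphism $y\mapsto\ux-y$, intersect the resulting comeagre set with $A$ by Baire's theorem, and read off $\ux\in A+B$. For the multiplicative part you again run the same scheme, but you are more careful than the paper: you correctly observe that the identity $A\cdot B=\Rm$ as stated in Lemma~\ref{erd} is actually \emph{false} in general, with $A=B=(\R\setminus\{0\})^m$ as a counterexample (then $A\cdot B=(\R\setminus\{0\})^m\neq\Rm$). This is a genuine slip in the paper: its own proof ``works very similarly, considering the componentwise quotient $\underline{t}/A$'' fails whenever $\underline{t}$ has a zero coordinate, since then $\underline{t}/A$ sits inside a coordinate hyperplane and is meagre rather than comeagre. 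What is provable by this argument, and what you correctly prove, is $A\cdot B\supseteq(\R\setminus\{0\})^m$ and $A/B\supseteq(\R\setminus\{0\})^m$. For the applications in \S\ref{s5.12} this is harmless (and your closing remark to this effect is apt), because there $A=\mathcal{W}_m(\infty)$, which does contain $\underline{0}$ and ample vectors on every coordinate hyperplane, so $A\cdot A=A/A=\Rm$ can be patched by hand; but the lemma's statement should really be weakened to $A\cdot B\supseteq(\R\setminus\{0\})^m$ or supplemented by a hypothesis such as $\underline{0}\in A\cup B$.
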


A short proof for $A=B$ and $m=1$ based on Baire's category Theorem
is described in~\cite{erdos}. We present the analogous proof for
the sum in the general case.
For $\underline{t}\in\Rm$, we may write the translate
$\underline{t}-A= \cap_{j\geq 1} O_{1,j}$ of $-A$
as countable intersection of open dense sets $O_{1,j}=\underline{t}-O_{1,j}^{\prime}$. 
Similarly $B=\cap_{j\geq 1} O_{2,j}$ for open dense sets $O_{2,j}$. Then
the intersection $(\underline{t}-A)\cap B=\cap_{j\geq 1} O_{1,j} \cap_{j\geq 1} O_{2,j}$ is still a countable
intersection of open dense sets, thus by Baire's theorem
dense, in particular non-empty. This means $\underline{t}\in A+B$.
Since $\underline{t}\in\Rm$ was arbitrary, the claim follows.
The proof for the product works very similarly, considering
the componentwise quotient
$\underline{t}/A=\{ \underline{t}\}/A$ in place of $\underline{t}-A$.

\subsection{Proof of Theorem~\ref{T2}}

Since the set $\mathcal{W}_m(\infty)$ is comeagre 
as recalled in \S~\ref{top},
by Lemma~\ref{erd}, if $A\subseteq \Rm$ is a comeagre set as well, then we the sumset
$\mathcal{W}_m(\infty)+ A$ must be the entire space $\Rm$. On the other hand, from \eqref{eq:thor} and \eqref{eq:jarnik}
we see that
\begin{align*}
\dim_H(\mathcal{W}_m(\infty)+ A)\leq 
 \dim_H( \mathcal{W}_m(\infty) ) + \dim_P(A)= \dim_P(A),
\end{align*}
hence indeed we conclude $\dim_P(A)=m$.

\subsection{Proof of Theorem~\ref{C3} }  \label{s5.12}

Let $m\geq 1$. For
$A,B\subseteq \Rm$, write $A\pm B$ for either $A+B$ or 
$A-B$ and $A^2=\{ (a_1^2, \ldots,a_m^2): (a_1, \ldots,a_m)\in A\}$.
Now let $A=\mathcal{W}_m(\infty)$ throughout. 
We have $A\circ A=\Rm$ for any $\circ\in\{ +,-,\cdot,/ \}$ 
by Lemma~\ref{erd} and since $A=-A$ and $A=A^{-1}$. 
The latter identity needs some explanation. Assume $\ux=(\xi_1,\ldots,\xi_m)\in A$.
Then 
\[
\Vert q\ux-\underline{p}\Vert \leq q^{-N}
\]
has a solution for arbitrarily large $N$. Since $|p_i|\asymp q$, $1\leq i\leq m$,
are all of the same magnitude, we check that if we let
$\underline{p}=(p_1,\ldots,p_m)$ and $P=p_1p_2\cdots p_m$ then
\[
\Vert P\xi_i^{-1}-\frac{qP}{p_i}\Vert \ll q^{-N+m-1}\ll
|P|^{-\frac{N-m+1}{m} }, \qquad 1\leq i\leq m.
\]
Since the term $(N-m+1)/m$ tends to infinity with $N$,
we infer $\ux^{-1}=(\xi_1^{-1}, \ldots,\xi_m^{-1})\in A$ as well, and the claim follows.
Moreover, from Theorem~\ref{DFSU}, the argument in the proof of Theorem~\ref{T2}
and since $\mathcal{S}_m=-\mathcal{S}_m$, we see 
\[
\dim_H(A\pm \mathcal{S}_m)\leq \dim_P(\mathcal{S}_m)<m, \qquad m\geq 1.
\]
From \eqref{eq:B},
for $m\geq 5$ and $\epsilon\in(0,1/20)$ we infer $\mathcal{S}_m\pm \mathcal{S}_m\supseteq \mathcal{S}_m(1/4-\epsilon)\pm \mathcal{S}_m(1/4-\epsilon)=\Rm$. 

Now let again $m\geq 1$ be arbitrary.
Write $C=C_{3,\{0,2\}}$  for Cantor's middle
third set and let 
 $B=((C\cup C^2)+\mathbb{Z})^m$, 
the $m$-fold Cartesian product of the union of all
shifts by integers of Cantor's middle third set and its square. 
It is well-known that $C+C=[0,2]$ and $C-C=[-1,1]$,
see~\cite[\S1]{art} for a short proof, and hence $B+B=B-B=\Rm$.
In~\cite[Theorem~1]{art} it is shown that $C^2\cdot C=\{ x^2y: x,y\in C\}=[0,1]$, thus $B\cdot B=\Rm$. Moreover, again as a consequence
of~\cite[Theorem~1]{art}, the set $C/C$
contains arbitrarily large intervals (in fact its precise structure 
as a countable union of closed intervals is determined). 
Hence we have $B/B=\Rm$.

Since $C^2$ is a locally Lipschitz image of $C$ we
have $\dim_H(C^2)=\dim_{H}(C)$ as well as $\dim_P(C^2)=\dim_P(C)$,
hence by \eqref{eq:haupack} these values all coincide, 
and since $B$ is only a countable union, 
from \eqref{eq:kproS} we finally get
\begin{equation}  \label{eq:pp}
\dim_P(B)=\dim_H(B)=\dim(C^m)=\frac{\log 2}{\log 3}m<m.
\end{equation}
Thus, on the other hand, since our componentwise operations $\circ\in\{+,-,\cdot,/\}$ are 
locally Lipschitz maps, by \eqref{eq:tricot}, \eqref{eq:jarnik} and \eqref{eq:pp}
we have 
\[
\dim_H(A\circ B)\leq \dim_H(A\times B) \leq \dim_H(A) + \dim_P(B) \leq 0+\frac{\log 2}{\log 3}m<m.
\]
For $\circ=+$ see alternatively
\eqref{eq:thor}. All claims are proved.

%  HAUSDORFF DIM ESTIMATE OF S_m(1/m) : at least  (m-1)^2/(m+1)^2
%
% use identity (\mu_0-1)/(\mu_0\mu_1)= 1/m ;  express in \mu_0 all
%
%
% falconer example 4.6 with M_n= H_n^{\mu_1-1} , eps_n= H_n^{-\mu_1} 
% where H_n places where arbitrary digit block starts ie
% arbitrary digits in [H_n, H_n^{\mu_1}] and all 0 digits in rest intervals
% ie 0 digits in [H_n^{\mu_1}}, H_{n+1}]
%
% gives max problem max for \mu_0= (m+1)/(m-1) 
% solution to (m+1)x^2-2mx+m+1 = 0
%  and yields stated value
% similar for S_m(w), just replace identity 1/m by w above

\end{document}